\definecolor{my_color}{rgb}{0,0.5,0.5}
\definecolor{MIXT}{rgb}{0.8,0.5,0.2}
\definecolor{mixt}{rgb}{0.5,0.3,0.2}
\definecolor{sin}{rgb}{0,0.5,0.5}
\definecolor{darkblue}{rgb}{0,0.1,0.8}
\definecolor{redi}{rgb}{0.5,0,0.4}
\numberwithin{equation}{section}
\font\tencyr=wncyr10 
\def\rus{\tencyr\cyracc}
\newtheorem{thm}{Theorem}[section]
\newtheorem{lm}[thm]{Lemma}
\newtheorem{prop}[thm]{Proposition}
\theoremstyle{remark}
\newtheorem{rmk}[thm]{Remark}
\theoremstyle{definition}
\newtheorem{ex}[thm]{Example} 
\newtheorem{df}{Definition}
\newcommand {\ah}{{\mathfrak a}}
\newcommand {\be}{{\mathfrak b}}
\newcommand {\g}{{\mathfrak g}}
\newcommand {\h}{{\mathfrak h}}
\newcommand {\ka}{{\mathfrak k}}
\newcommand {\el}{{\mathfrak l}}
\newcommand {\ma}{{\mathfrak m}}
\newcommand {\p}{{\mathfrak p}}
\newcommand {\q}{{\mathfrak q}}
\newcommand {\rr}{{\mathfrak r}}
\newcommand {\es}{{\mathfrak s}}
\newcommand {\te}{{\mathfrak t}}
\newcommand {\ut}{{\mathfrak u}}
\newcommand {\z}{{\mathfrak z}}
\newcommand {\sln}{{\mathfrak{sl}}_n}
\newcommand {\son}{{\mathfrak {so}}_{n}}
\newcommand{\gt}{\mathfrak}
\newcommand {\eus}{\EuScript}
\newcommand {\gA}{{\eus A}}
\newcommand {\gC}{{\eus C}}
\newcommand {\gS}{{\eus S}}
\newcommand {\gZ}{{\eus Z}}
\newcommand {\ap}{\alpha}
\newcommand {\lb}{\lambda}
\newcommand {\vp}{\varphi}
\newcommand {\ca}{{\mathcal A}}
\newcommand {\gc}{{\mathcal C}}
\newcommand {\co}{{\mathcal O}}
\newcommand {\cz}{{\mathcal Z}}
\newcommand {\BC}{{\mathbb C}}
\newcommand {\BP}{{\mathbb P}}
\newcommand {\BV}{{\mathbb V}}
\newcommand {\BR}{{\mathbb R}}
\newcommand{\id}{{\mathsf{id}}}
\newcommand {\ad}{{\mathrm{ad\,}}}
\newcommand {\Ann}{{\mathsf{Ann\,}}}
\renewcommand {\Re}{{\mathsf{Re\,}}}
\renewcommand {\Im}{{\mathsf{Im\,}}}
\newcommand {\codim}{{\mathrm{codim\,}}}
\newcommand {\ind}{{\mathsf{ind\,}}}
\newcommand {\Lie}{{\mathsf{Lie\,}}}
\newcommand {\Ker}{\operatorname{Ker}}
\newcommand {\rk}{{\mathsf{rk\,}}}
\newcommand {\crk}{{\mathsf{ind}}}
\newcommand {\trdeg}{{\mathrm{tr.deg\,}}}
\newcommand {\tri}{\mathfrak{sl}_2}
\newcommand {\GR}[2]{{\textrm{{\sf\bfseries #1}}}_{#2}}
\newcommand {\ov}{\overline}
\newcommand {\bb}{{\boldsymbol{b}}}
\newcommand {\cW}{{\mathcal W}}
\newcommand {\beq}{\begin{equation}}
\newcommand {\eeq}{\end{equation}}
\newcommand{\curge}{\succcurlyeq}
\renewcommand{\le}{\leqslant}
\renewcommand{\ge}{\geqslant}
\newcommand {\bbk}{\Bbbk}
\begin{document}
\setlength{\parskip}{3pt plus 2pt minus 0pt}
\hfill { {\scriptsize September 11, 2020}}
\vskip1ex

\title[Poisson-commutative subalgebras]
{Compatible Poisson brackets associated with $2$-splittings
and Poisson commutative subalgebras of $\gS(\g)$}
\author[D.\,Panyushev]{Dmitri I. Panyushev}
\address[D.\,Panyushev]%
{Institute for Information Transmission Problems of the R.A.S, Bolshoi Karetnyi per. 19,
Moscow 127051, Russia}
\email{panyushev@iitp.ru}
\author[O.\,Yakimova]{Oksana S.~Yakimova}
\address[O.\,Yakimova]{Universit\"at zu K\"oln,
Mathematisches Institut, Weyertal 86-90, 50931 K\"oln, Deutschland}
\email{oksana.yakimova@uni-jena.de}
\thanks{The first author is supported by RFBR grant {\rus N0} 20-01-00515.
The second author is funded by the Deutsche Forschungsgemeinschaft (DFG, German Research Foundation) --- project number 330450448.}
\keywords{index of Lie algebra, coadjoint representation, symmetric invariants}
\subjclass[2010]{17B63, 14L30, 17B08, 17B20, 22E46}
\maketitle
\begin{abstract}
Let $\gS(\g)$ be the symmetric algebra of a reductive Lie algebra $\g$ equipped with the standard Poisson structure.
If ${\mathcal C}\subset\gS(\g)$ is a Poisson-commutative subalgebra, then $\trdeg\gc\le\bb(\g)$, where
$\bb(\g)=(\dim\g+\rk\g)/2$. We present a method for constructing the Poisson-commutative subalgebra
$\gZ_{\langle\h,\rr\rangle}$  of transcendence degree $\bb(\g)$ via a vector space decomposition
$\g=\h\oplus\rr$ into a sum of two spherical  subalgebras. There are some natural examples, where the algebra $\gZ_{\langle\h,\rr\rangle}$ appears to be polynomial.
The most interesting case is related to the pair $(\be,\ut_-)$, where $\be$ is a Borel subalgebra of $\g$.  Here we prove that
$\gZ_{\langle\be,\ut_-\rangle}$ is maximal Poisson-commutative and is
complete on every regular coadjoint orbit in $\g^*$. Other series of examples are related to involutions
of $\g$.
\end{abstract}

\tableofcontents

\section*{Introduction}

\noindent
The ground field $\bbk$ is algebraically closed and of characteristic $0$. A commutative associative
$\bbk$-algebra ${\ca}$  is a {\it Poisson algebra\/} if there is an additional anticommutative bilinear
operation $\{\,\,,\,\}\!:\,\ca\times \ca \to\ca$ called a {\it Poisson bracket} such that
\[
\begin{array}{cl}
\{a,bc\}=\{a,b\}c+b\{a,c\}, &  \text{(the Leibniz rule)} \\
\{a,\{b,c\}\}+\{b,\{c,a\}\}+\{c,\{a,b\}\}=0 & \text{(the Jacobi identity)}
\end{array}
\]
for all $a,b,c\in{\ca}$. A subalgebra ${\mathcal C}\subset \ca $ is {\it Poisson-commutative} if
$\{\mathcal{C},\mathcal{C}\}=0$. We also write that $\mathcal C$ is a {\sf PC}-{\it subalgebra}.
The {\it Poisson centre} of ${\ca}$ is $\mathcal{ZA}=\{z\in\ca \mid \{z,a\}=0  \ \forall a\in{\ca}\}$.
Two Poisson brackets on $\ca$ are said to be {\it compatible}, if all their linear combinations are again Poisson brackets. Usually, Poisson algebras occur in nature as algebras of functions on varieties
(manifolds), and we  only need the case, where such a variety is the dual of a Lie algebra
$\q$ and hence $\ca=\bbk[\q^*]=\gS(\q)$ is a polynomial ring in $\dim\q$ variables.

There is a general method for constructing a ``large'' Poisson-commutative subalgebra of $\gS(\q)$
associated with a pair of compatible brackets, see e.g.~\cite[Sect.~1]{duzu}. Let $\{\,\,,\,\}'$ and $\{\,\,,\,\}''$
be compatible Poisson brackets on $\q^*$. This yields a two parameter family of Poisson brackets
$a\{\,\,,\,\}'+b\{\,\,,\,\}''$, $a,b\in\bbk$. As we are only interested in the corresponding Poisson centres, it is
convenient to organise this, up to scaling, in a 1-parameter family $\{\,\,,\,\}_t=\{\,\,,\,\}'+t\{\,\,,\,\}''$,
$t\in\BP=\bbk\cup\{\infty\}$, where $t=\infty$ corresponds to the bracket $\{\ ,\ \}''$.
The {\it index\/} $\crk\{\,\,,\,\}$ of a Poisson bracket $\{\,\,,\,\}$ is defined in Section~\ref{sect:prelim}.  For
almost all $t\in\BP$, $\crk\{\,\,,\,\}_t$ has one and the same (minimal) value. Set
$\BP_{\sf reg}=\{t\in \BP\mid \crk\{\,\,,\,\}_t \text{ is minimal}\}$ and
$\BP_{\sf sing}=\BP\setminus \BP_{\sf reg}$. Let $\cz_t$ denote the Poisson centre of
$(\gS(\q),\{\,\,,\,\}_t)$. The crucial fact is that the algebra $\gZ\subset \gS(\q)$ generated by
$\{\cz_t\mid t\in\BP_{\sf reg}\}$ is Poisson-commutative w.r.t{.} to any bracket in the family.
In many cases, this construction provides a {\sf PC}-{subalgebra} of $\gS(\q)$ of maximal transcendence degree.

A notable realisation of this scheme is the {\it argument shift method} of~\cite{mf}. It employs the
Lie--Poisson bracket on $\q^*$ and a Poisson bracket $\{\,\,,\,\}_\gamma$ of degree zero associated
with $\gamma\in\q^*$. Here $\{\xi,\eta\}_\gamma=\gamma([\xi,\eta])$ for
$\xi,\eta\in\q$. The algebras $\gZ=\gZ_\gamma$ occurring in this approach are known nowadays as
 {\it  Mishchenko--Fomenko subalgebras}.

Let $G$ be a connected semisimple Lie group with $\Lie(G)=\g$. In \cite{oy}, we studied compatible
Poisson brackets and {\sf PC}-subalgebras related to an involution of $\g$.
Our main object now is the $1$-parameter family of linear Poisson brackets on $\g^*$ related to a
{\it $2$-splitting} of $\g$,  i.e., a vector space sum $\g=\h\oplus\rr$, where $\h$ and $\rr$ are
Lie subalgebras. This is also the point of departure for the Adler--Kostant--Symes theorems and
subsequent results, see~\cite[Sect.~4.4]{AMV}, \cite[\S\,2]{t16}. But our further steps are quite different.
For $x\in\g$, let $x_\rr\in\rr$ and $x_{\h}\in\h$ be the components of  $x$.
Here one can contract $\g$ to either $\h\ltimes\rr^{\rm ab}$ or $\rr\ltimes \h^{\rm ab}$.
Let $\{\,\,,\,\}_{0}$ and $\{\,\,,\,\}_{\infty}$ be the corresponding Poisson brackets on $\g^*$. Then
\[
\{x,y\}_0=\begin{cases}
[x,y] & \text{ if } \ x,y\in \h, \\
{}[x,y]_{\rr} & \text{ if } \ x \in\h,y\in \rr, \\
\quad 0 & \text{ if } \ x,y\in \rr ,
\end{cases} \
\text{ and } \
\{x,y\}_\infty=\begin{cases}
[x,y] & \text{ if } \ x,y\in \rr, \\
{}[x,y]_{\h} & \text{ if } \ x \in\h,y\in \rr, \\
\quad 0 & \text{ if } \ x,y\in \h.
\end{cases}
\]
Since $ \{\,\,,\,\}=\{\,\,,\,\}_0+\{\,\,,\,\}_\infty$ is a Poisson bracket, these two brackets are compatible,
cf.~\cite[Lemma~1.1]{oy}. Consider the $1$-parameter family of Poisson brackets
\[
   \{\,\,,\,\}_t=\{\,\,,\,\}_0+t\{\,\,,\,\}_\infty,
\]
where $t\in \BP$.
Here $\bbk^\times\subset \BP_{\sf reg}$. Note that these brackets are different from the bracket
\[
    (x,y)\mapsto  [x_\h,y_\h]-  [x_\rr,y_\rr]
\]
considered in the Adler--Kostant--Symes theory. The algebras $\g_{(0)}=\h\ltimes\rr^{\sf ab}$ and
$\g_{(\infty)}=\rr\ltimes\h^{\sf ab}$  are
{\it In\"on\"u--Wigner contractions} of $\g$, and a lot of information on their symmetric invariants is
obtained in~\cite{contr,Y-imrn}.

Let $\gZ=\gZ_{\langle\h,\rr\rangle}$ denote the subalgebra of $\gS(\g)$ generated by all centres $\cz_t$
with $t\in \BP_{\sf reg}$. Then $\{\gZ,\gZ\} =0$ and therefore
\[
   \trdeg \gZ\le \frac{1}{2}(\dim\g+\rk\g)=\bb(\g).
\]
This upper bound for $\trdeg\gZ$ is attained
if $\ind\{\,\,,\,\}_0=\ind\{\,\,,\,\}_{\infty}=\ind\{\,\,,\,\}=\rk\g$, i.e., $\BP=\BP_{\sf reg}$,
see Theorem~\ref{thm:dim-Z}. A $2$-splitting with such property is said to be {\it non-degenerate.}
We show that the $2$-splitting $\g=\h\oplus\rr$ is non-degenerate if and only if
both subalgebras $\h$ and $\rr$ are {\it spherical}, see Theorem~\ref{thm:c=0} and Remark~\ref{Dima-T}
for details.
Therefore, we concentrate on $2$-splittings involving spherical subalgebras of $\g$.
This allows us to point out many natural pairs $(\h,\rr)$ such that $\trdeg\gZ=\bb(\g)$.
Furthermore, in several important cases, $\gZ$ is a polynomial algebra.

1) \ Consider the $2$-splitting $\g =\be\oplus\ut_-$, where $\be$ and $\be_-$ are two opposite Borel
subalgebras, $\te=\be\cap\be_-$,  and $\ut_-=[\be_-,\be_-]$. The {\sf PC}-subalgebra
$\gZ=\gZ_{\langle\be,\ut_-\rangle}$ has a nice set of algebraically independent generators.
Let $\{H_i \mid 1\le i\le \rk\g \}\subset\gS(\g)^{\g }$ be a set of homogeneous basic invariants and
$d_i=\deg H_i$. The splitting $\g =\gt b\oplus\ut_-$ leads to a bi-grading in
$\gS(\g)$ and the decomposition $H_i=\sum_{j=0}^{d_i}(H_i)_{(j,d_i-j)}$.
Then $\gZ_{\langle\be,\ut_-\rangle}$ is freely generated by
the bi-homogeneous components $(H_i)_{(j,d_i-j)}$ with $1\le i\le \rk\g $,
$1\le j \le d_j-1$ and a basis for the Cartan subalgebra  $\te$, see Theorem~\ref{thm:b-n_polynomial}.

It is easily seen that if ${\mathcal C}\subset\gS(\g)$ is a {\sf PC}-subalgebra and
$\trdeg{\mathcal C}=\bb(\g)$, then $\mathcal C$ is complete on generic regular $G$-orbits,
cf. Lemma~\ref{obvious}. Using properties of the principal nilpotent orbit in $\g\simeq\g^*$, we are able
to prove that $\gZ_{\langle\be,\ut_-\rangle}$ is {\it complete} on {\bf each} regular coadjoint orbit of $G$
(Theorem~\ref{thm:b-n_complete})  and that it is a {\it maximal\/} {\sf PC}-subalgebra of $\gS(\g)$
(Theorem~\ref{max-u}).  One can also consider a more general setting, where $\be$ is replaced with an
arbitrary parabolic $\p\supset\be$, see Remark~\ref{rem:setting-p}.

2) \  Let $\sigma$ be an involution of maximal rank of $\g$, i.e., the $(-1)$-eigenspace of $\sigma$,
$\g_1$, contains a Cartan subalgebra of $\g$.  If $\g_0$ is the corresponding fixed-point subalgebra, then
there is a Borel $\be$ such that $\g=\be\oplus\g_0$. This $2$-splitting is non-degenerate and we show
that $\gZ_{\langle\be,\g_0\rangle}$ is a polynomial algebra, see Theorem~\ref{thm:g0-b-polynomial}. At
least for $\g=\sln$, this {\sf PC}-subalgebra is also maximal (Example~\ref{ex:sl-so}). It is likely that the
maximality takes place for all simple $\g$.
\\ \indent
More generally, a non-degenerate $2$-splitting is associated with any involution $\sigma$ such that
$\g_1\cap\g_{\sf reg}\ne \varnothing$, see Remark~\ref{rmk:any-invol}.

3) \ Consider a semisimple Lie algebra $\tilde\g=\g\times\g$ and involution $\tau$ that permutes the
summands. Here $\tilde\g_1\cap\tilde\g_{\sf reg} \ne \varnothing$ and this yields a natural
non-degenerate $2$-splitting $\g\times\g=\Delta_\g\oplus\h$, which represents the famous Manin
triple. The corresponding
{\sf PC}-subalgebra $\gZ\subset\gS(\g\oplus\g)$ appears to be polynomial. This has a well-known
counterpart over $\BR$ that involves a compact real form $\ka$ of $\g$.
Namely, if  $\g$ is considered as a real Lie algebra, then it has the {\it Iwasawa decomposition}
$\g=\ka\oplus\rr$~\cite[Ch.\,5,\,\S 4]{t41},
where $\rr\subset \be$ is a solvable real Lie algebra. We prove that the $\BR$-algebra
$\gZ_{\langle\ka,\rr\rangle}$ is also polynomial, see Section~\ref{sect:k-b}.

We refer to \cite{duzu} for generalities on Poisson varieties, Poisson tensors, symplectic leaves, etc.
Our general reference for algebraic groups and Lie algebras is~\cite{t41}.

\section{Preliminaries on the coadjoint representation}
\label{sect:prelim}

\noindent
Let $Q$\/ be a connected linear algebraic group with $\Lie(Q)=\q$. Then $\gS_\bbk(\q)=\gS(\q)$ is the
symmetric algebra of $\q$ over $\bbk$. It is identified with the graded algebra of polynomial functions on
$\q^*$, and we also write $\bbk[\q^*]$ for it.
\\ \indent
Write $\q^\xi$ for the {\it stabiliser\/} in $\q$ of $\xi\in\q^*$. The {\it index of}\/ $\q$, $\ind\q$, is the minimal codimension of $Q$-orbits in $\q^*$. Equivalently,
$\ind\q=\min_{\xi\in\q^*} \dim \q^\xi$. Let $\bbk(\q^*)^Q$ be the field of $Q$-invariant rational functions
and $\bbk[\q^*]^Q$ the algebra of $Q$-invariant polynomial functions on $\q^*$.
By the Rosenlicht theorem, 
one has $\ind\q=\trdeg\bbk(\q^*)^Q$. Therefore $\trdeg\bbk[\q^*]^Q\le \ind\q$.
The ``magic number'' associated with $\q$ is $\bb(\q)=(\dim\q+\ind\q)/2$.
Since the coadjoint orbits are even-dimensional, the magic number is an integer. If $\q$ is reductive, then
$\ind\q=\rk\q$ and $\bb(\q)$ equals the dimension of a Borel subalgebra. The Lie--Poisson bracket on
$\bbk[\q^*]$ is defined on the elements of degree $1$ (i.e., on $\q$) by $\{x,y\} :=[x,y]$.
The {\it Poisson centre\/} of $\gS(\q)$ is
\[
    \cz\gS(\q)=\{H\in \gS(\q)\mid \{H,x\} =0 \ \ \forall x\in\q\}=\gS(\q)^\q .
\]
As $Q$ is connected, we have $\gS(\q)^\q=\gS(\q)^{Q}=\bbk[\q^*]^Q$.
The set of $Q$-{\it regular\/} elements of $\q^*$ is
\beq       \label{eq:regul-set}
    \q^*_{\sf reg}=\{\eta\in\q^*\mid \dim \q^\eta=\ind\q\} .
\eeq
The $Q$-orbits in $\q^*_{\sf reg}$ are also called {\it regular}.
Set $\q^*_{\sf sing}=\q^*\setminus \q^*_{\sf reg}$.
We say that $\q$ has the {\sl codim}--$n$ property if $\codim \q^*_{\sf sing}\ge n$.  By~\cite{ko63}, the semisimple algebras $\g$ have the {\sl codim}--$3$ property.

Let $\Omega^i$ be the $\gS(\q)$-module of differential $i$-forms on $\q^*$. Then
$\Omega=\bigoplus_{i=0}^n \Omega^i$ is the $\gS(\q)$-algebra of regular
differential forms on $\q^*$. Likewise,
$\cW=\bigoplus_{i=0}^n \cW^i$  is the graded skew-symmetric algebra
of polyvector fields, which is generated by the $\gS(\q)$-module $\cW^1$ of polynomial vector fields on $\q^*$.
Both algebras are free $\gS(\q)$-modules.
The {\it Poisson tensor (bivector)\/} $\pi\in \operatorname{Hom}_{\gS(\q)}(\Omega^2,{\gS(\q)})$
associated with a Poisson bracket $\{\,\,,\,\}$ on $\q^*$ is defined by the equality
$\pi(\textsl{d}f\wedge \textsl{d}g)=\{f,g\}$ for $f,g\in \gS(\q)$. For any $\xi\in\q^*$, $\pi(\xi)$ defines a
skew-symmetric bilinear form on $T^*_\xi(\q^*)\simeq\q$.
Formally, if $f,g\in\gS(\q)$, $v=\textsl{d}_\xi f$, and $u=\textsl{d}_\xi g$,  then
$\pi(\xi)(v,u)=\pi(\textsl{d}f\wedge \textsl{d}g)(\xi)=\{f,g\}(\xi)$.
In view of the duality between differential 1-forms and vector
fields, we may regard $\pi$ as an element of $\cW^2$. Let $[[\ ,\ ]]: \cW^i\times \cW^j \to \cW^{i+j-1}$
be the Schouten bracket. The Jacobi identity for $\pi$ is equivalent to that
$[[\pi,\pi]]=0$, see e.g.~\cite[Chapter\,1.8]{duzu}.

\begin{df} \label{def-crk}
The {\it index\/} of a Poisson bracket $\{\,\,,\,\}$ on $\q^*$, denoted $\ind\{\,\,,\,\}$, is the
minimal codimension of the symplectic leaves in $\q^*$. 
\end{df}
It is easily seen that if $\pi$ is the corresponding Poisson tensor, then
\\[.4ex]
\centerline{$\crk\{\,\,,\,\}=\min_{\xi\in\q^*} \dim  \ker \pi(\xi)=n-\max_{\xi\in\q^*}\rk \pi(\xi)$.}
\\
Recall that for a Lie algebra $\q$ and the dual space $\q^*$ equipped with the Lie--Poisson bracket
$\{\,\,,\,\} $, the symplectic leaves are the coadjoint $Q$-orbits. Hence
$\crk\{\,\,,\,\} =\ind\q$.

\subsection{Complete integrability on coadjoint orbits}
For $\xi\in\q^*$, let $Q{\cdot}\xi$ denote its coadjoint  $Q$-orbit. If
$\psi_\xi\!: T^*_\xi \q^* \to T^*_\xi(Q{\cdot}\xi)$ is the canonical projection, then $\ker\psi_\xi=\q^\xi$.
Let $\pi$ be the Poisson tensor of the Lie--Poisson bracket on $\q^*$.
Then $\pi(\xi)(x,y)=\xi([x,y])$ for $x,y\in\q$.
The skew-symmetric form $\pi(\xi)$ is non-degenerate on
$T^*_\xi(Q{\cdot}\xi)$. The algebra $\bbk[Q{\cdot}\xi]$ carries the Poisson structure, which is inherited from $\q^*$.
We have
 $$\{F_1|_{Q{\cdot}\xi},F_2|_{Q{\cdot}\xi}\}=\{F_1,F_2\}|_{Q{\cdot}\xi}$$
 for all $F_1,F_2\in\gS(\q)$.
The coadjoint orbit $Q{\cdot}\xi$ is a smooth symplectic variety.

\begin{df}     \label{com-fam}
A set $\boldsymbol{F}=\{F_1,\ldots,F_m\}\subset \bbk[Q{\cdot}\xi]$ is said to be {\it a complete family in involution} if
$F_1,\ldots,F_m$ are algebraically independent, $\{F_i,F_j\}=0$ for all $i,j$, and
$m=\frac{1}{2}\dim (Q{\cdot}\xi)$. In the terminology of \cite[Def.~4.13]{AMV}, here
$(Q{\cdot}\gamma, \{\,\,,\,\}, \boldsymbol{F})$ is a {\it completely integrable system}.
\end{df}

The interest in integrable systems arose from the theory of differential equations and in particular
equations of motions, see e.g. \cite[Chapter~4]{AMV}. By now this theory has penetrated
nearly all of mathematics and has had a definite impact on such remote fields as
combinatorics and number theory. A rich interplay between Lie theory and complete integrability
is well-documented, see~\cite{t16,AMV,Per}.
Applications of {\sf PC}-subalgebras of $\gS(\q)$ are one of the striking examples of this interplay.

Let $\gA\subset \gS(\q)$ be a Poisson-commutative subalgebra. Then the restriction of $\gA$ to
$Q{\cdot}\xi$, denoted $\gA|_{Q{\cdot}\xi}$,
is Poisson-commutative for every $\xi$.  We say that $\gA$ is {\it complete on\/} $Q{\cdot}\xi$, if
$\gA|_{Q{\cdot}\xi}$ contains a complete family in involution.
The condition is equivalent to the equality $\trdeg (\gA|_{Q{\cdot}\xi}) = \frac{1}{2}\dim (Q{\cdot}\xi)$.

\begin{lm}      \label{obvious}
Suppose that $\gA\subset \gS(\q)$ is Poisson-commutative, $\xi\in\q^*_{\sf reg}$, and
$\dim\textsl{d}_\xi \gA=\bb(\q)$. Then $\gA$ is complete on $Q{\cdot}\xi$.
\end{lm}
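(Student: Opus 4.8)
The statement is a standard "enough independent functions $\Rightarrow$ completeness" argument on a symplectic leaf, so I would argue as follows. Write $\mathcal O=Q{\cdot}\xi$ and let $\pi=\pi(\xi)$ be the Poisson bivector of the Lie--Poisson structure evaluated at $\xi$, regarded as the skew form $\pi(x,y)=\xi([x,y])$ on $T^*_\xi\q^*\simeq\q$. Recall $\Ker\pi=\q^\xi$, and $\psi_\xi\colon T^*_\xi\q^*\to T^*_\xi\mathcal O$ is the projection with kernel $\q^\xi$, so $\pi$ descends to a \emph{nondegenerate} form $\bar\pi$ on $T^*_\xi\mathcal O$, of rank $\dim\mathcal O=\dim\q-\ind\q$. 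The Poisson-commutativity of $\gA$ says that $\textsl{d}_\xi\gA$, as a subspace of $T^*_\xi\q^*$, is \emph{isotropic} for $\pi$: indeed $\{F_i,F_j\}(\xi)=\pi(\textsl{d}_\xi F_i,\textsl{d}_\xi F_j)=0$ for all $F_i,F_j\in\gA$. The key linear-algebra input is therefore: an isotropic subspace of a skew form of corank $\ind\q$ has dimension at most $\tfrac12(\dim\q+\ind\q)=\bb(\q)$, and if it meets this bound it must contain $\Ker\pi=\q^\xi$ and project onto a \emph{Lagrangian} (maximal isotropic) subspace of $(T^*_\xi\mathcal O,\bar\pi)$.

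**Key steps, in order.** First I would record the isotropy of $V:=\textsl{d}_\xi\gA\subset\q$ for $\pi$, which is immediate from $\{\gA,\gA\}=0$. Second, the linear algebra: since $\pi$ has kernel $\q^\xi$ with $\dim\q^\xi=\ind\q$ (as $\xi\in\q^*_{\sf reg}$), any isotropic $V$ has $\dim V\le \tfrac12(\dim\q-\ind\q)+\ind\q=\bb(\q)$; equality forces $\q^\xi\subseteq V$ and $\psi_\xi(V)$ Lagrangian in $T^*_\xi\mathcal O$. This is where I invoke $\dim\textsl{d}_\xi\gA=\bb(\q)$ from the hypothesis. Third, translate back to the orbit: the differentials at $\xi$ of the restrictions $F|_{\mathcal O}$, $F\in\gA$, are exactly $\psi_\xi(\textsl{d}_\xi F)$, so $\textsl{d}_\xi(\gA|_{\mathcal O})=\psi_\xi(V)$ has dimension $\tfrac12\dim\mathcal O$. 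Finally, $\dim\textsl{d}_\xi(\gA|_{\mathcal O})=\tfrac12\dim\mathcal O$ means the restricted functions have maximal possible functional rank at a point of $\mathcal O$, hence their transcendence degree is $\ge\tfrac12\dim\mathcal O$; the reverse inequality is automatic because $\gA|_{\mathcal O}$ is Poisson-commutative on the symplectic variety $\mathcal O$, so $\trdeg(\gA|_{\mathcal O})=\tfrac12\dim\mathcal O$ and, by the remark preceding Lemma~\ref{obvious}, $\gA$ is complete on $\mathcal O$. One may pick an explicit finite subset $F_1,\dots,F_m\in\gA$ whose differentials at $\xi$ span $\psi_\xi(V)$ to exhibit the complete family in involution literally.

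**Main obstacle.** Nothing here is genuinely hard; the only point needing care is the passage from "$\textsl{d}_\xi(\gA|_{\mathcal O})$ has dimension $\tfrac12\dim\mathcal O$ at the single point $\xi$" to "$\trdeg(\gA|_{\mathcal O})=\tfrac12\dim\mathcal O$". This is the standard fact that for a finitely generated subalgebra of $\bbk[\mathcal O]$ the transcendence degree equals the generic rank of the differential, and that the rank at any particular point is a \emph{lower} bound for the generic rank (rank is lower semicontinuous); combined with the Poisson-commutativity bound $\trdeg(\gA|_{\mathcal O})\le\tfrac12\dim\mathcal O$ this pins the value exactly. So the crux is really the elementary symplectic linear algebra of step two, and then citing the dimension/transcendence-degree dictionary correctly; I expect the write-up to be short.
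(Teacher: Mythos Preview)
Your proposal is correct and follows essentially the same route as the paper: both project $\textsl{d}_\xi\gA$ to $T^*_\xi(Q{\cdot}\xi)$ via $\psi_\xi$ and use that $\dim\ker\psi_\xi=\ind\q$ for regular $\xi$ to conclude $\dim\psi_\xi(\textsl{d}_\xi\gA)\ge\bb(\q)-\ind\q=\tfrac12\dim(Q{\cdot}\xi)$. The paper's write-up is a one-line rank--nullity estimate, whereas you additionally spell out the isotropy/Lagrangian linear algebra (which in particular gives $\q^\xi\subset\textsl{d}_\xi\gA$); this extra detail is correct but not strictly needed for the conclusion.
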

\begin{proof}
Since $\xi$ is regular, we have $\dim\ker\psi_\xi=\ind\q$.
Therefore
\[
    \dim \psi_\xi(\textsl{d}_\xi \gA) \ge \bb(\q)-\ind\q=\frac{1}{2}\dim (Q{\cdot}\xi)
\]
as required.
\end{proof}

\section{In\"on\"u--Wigner contractions and their invariants}
\label{sect:2}
Let  $\h$ be a Lie subalgebra of $\q$. Choose a complementary subspace $V$ to $\h$ in $\q$, so that
$\q=\h \oplus V$ is a vector space decomposition.
For any $s\in\bbk^{\times}$, define the invertible linear map $\vp_s\!: \q\to\q$ by
setting $\vp_s\vert_{\h}=\id$, $\vp_s\vert_{V}=s{\cdot}\id$. Then $\vp_s\vp_{s'}=\vp_{ss'}$ and
$\vp_s^{-1}=\vp_{s^{-1}}$, i.e., this yields a one-parameter subgroup of ${\rm GL}(\q)$. The
map $\vp_s$ defines a new (isomorphic to the initial) Lie algebra structure $[\,\,,\,]_{(s)}$
on the same vector space $\q$ by the formula
\beq     \label{eq:fi_s}
    [x,y]_{(s)}=\vp_s^{-1}([\vp_s(x),\vp_s(y)]).
\eeq
The corresponding Poisson bracket is $\{\ ,\ \}_{(s)}$. We naturally extend $\vp_s$ to an automorphism of
$\gS(\q)$.  Then the centre of the Poisson algebra $(\gS(\q),\{\,\,,\,\}_s)$ equals
$\vp_s^{-1}(\gS(\q)^{\q})$.

The condition $[\h,\h]\subset \h$ implies that there is a limit of the brackets $[\ ,\ ]_{(s)}$
as $s$ tends to zero. The limit bracket is denoted by $[\ ,\ ]_{(0)}$ and the corresponding Lie algebra
$\q_{(0)}$ is the semi-direct product  $\h\ltimes V^{\sf ab}$, where $V^{\sf ab}\simeq \q/\h$ as an $\h$-module and $[V^{\sf ab},V^{\sf ab}]_{(0)}=0$. More precisely, if $x=h+v\in \q_{(0)}$ with $h\in\h$ and $v\in V$, then
\[
   [h+v,h'+v']_{(0)}=[h,h']+ [h,v']_V- [h',v]_V ,
\]
where $z_V$ denotes the $V$-component of $z\in\q_{(0)}$. The limit algebra $\q_{(0)}$ is called an
{\it In\"on\"u-Wigner} (=\,{\sf IW}) or {\it one-parameter contraction\/} of $\q$, see \cite[Ch.\,7,\S\,2.5]{t41}
or~\cite[Sect.\,1]{alafe}. Below, we will repeatedly use the following

{\bf Independence principle.}
{\it The {\sf IW}-contraction $\q_{(0)}$
does not depend on the initial choice of a complementary subspace $V$.}
\\
Therefore,
when there is no preferred choice of $V$, we write $\q_{(0)}=\h\ltimes (\q/\h)^{\sf ab}$.
By a general property of Lie algebra contractions,
we have $\ind\q_{(0)}\ge \ind\q$. We need conditions on $\q$ and $\h$ under which the index of the
{\sf IW}-contraction does not increase. For this reason, we switch below to the case in which
$\q=\g$ is reductive and hence $G$ is a connected reductive algebraic group.

For any irreducible algebraic $G$-variety $X$, there is the notion of the {\it complexity} of $X$,
denoted $c_G(X)$, see~\cite{vi86}. Namely, $c_G(X)=\dim X-\max_{x\in X}\dim B{\cdot}x$, where
$B\subset G$ is a Borel subgroup. 
Then $X$ is said to be {\it spherical}, if $c_G(X)=0$, i.e., if $B$ has a dense orbit in $X$. In particular,
for any subgroup $H\subset G$, one can consider the complexity of the homogeneous space $X=G/H$.
Then $H$ (or $\h=\Lie(H)$) is said to be {\it spherical\/} if $c_G(G/H)=0$.

\begin{thm}    \label{thm:c=0}
Suppose that $G$ is reductive and the homogeneous space $G/H$ is quasi-affine. Then
$\ind (\h\ltimes (\g/\h)^{\sf ab})=\ind \g+ 2c_G(G/H)=\rk \g+ 2c_G(G/H)$. In particular,
$\ind (\h\ltimes (\g/\h)^{\sf ab})=\ind \g$ if and only if\/ $\h$ is a spherical subalgebra of\/ $\g$.
\end{thm}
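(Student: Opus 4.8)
The strategy is to compute $\ind(\h\ltimes(\g/\h)^{\sf ab})$ as the generic stabiliser dimension for the coadjoint action of the contraction, and to relate this to orbit data on the $G$-side via the quasi-affineness hypothesis. Write $\q_{(0)}=\h\ltimes V^{\sf ab}$ with $V\simeq(\g/\h)^*$ as an $\h$-module (after a choice of complement; the Independence Principle makes the final answer choice-free). A linear functional on $\q_{(0)}$ is a pair $(\xi,v^*)$ with $\xi\in\h^*$ and $v^*\in V^*\simeq(\g/\h)$; I will first fix a generic $v^*$ and compute the stabiliser of $(\xi,v^*)$ for generic $\xi$, then add the dimensions of the two ``directions'' of genericity.

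\smallskip
\noindent\textbf{Key steps.}
First I would write out the coadjoint action of $\q_{(0)}$ explicitly: for $h+v\in\q_{(0)}$, the stabiliser of $(\xi,v^*)$ consists of those $h+v$ annihilating $\xi$ and $v^*$ under the bracket $[h+v,\,\cdot\,]_{(0)}$. The $V^{\sf ab}$-part being abelian, the ``$v$-component'' of the stabiliser equation only involves the pairing of $v$ against $\ad^*_\h$-translates of $\xi$, while the ``$h$-component'' equation splits into a condition on $\h^{v^*}$ (the stabiliser of $v^*\in\g/\h$ under $\h$) and a residual condition on $\xi$. The upshot — this is the standard Raïs-type computation for semidirect products — is that for generic $v^*$ the contribution is controlled by $\ind(\h^{v^*})$ together with $\dim(\g/\h)-\dim(\h\cdot v^*)$.

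Second, I would identify $\dim(\h\cdot v^*)$ for generic $v^*\in\g/\h$: since $G/H$ is quasi-affine, $\g/\h\simeq (T_{eH}(G/H))$ and the generic $H$-orbit on $\g/\h$ (equivalently on the cotangent-type space) has codimension equal to the complexity-related quantity; more precisely I would invoke the known formula $\dim\g/\h - \dim(\text{generic }\h\text{-orbit on }\g/\h)$ and the stabiliser $\h^{v^*}$ being, generically, a reductive-type subalgebra whose index I then need.

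Third — and this is where I would lean on existing results — I would cite the relationship between $\ind$ of such generic stabilisers and the complexity $c_G(G/H)$: by Vinberg's theory and the Elashvili/Raïs-type index formulas for semidirect products, one gets $\ind(\h\ltimes(\g/\h)^{\sf ab}) = \dim(\g/\h) - \dim(\text{generic }\h\text{-orbit}) + \ind(\text{generic stabiliser})$, and these two terms assemble to $\rk\g + 2c_G(G/H)$. The quasi-affineness is used precisely to guarantee $\g/\h$ carries enough invariants / that the generic orbit picture on $\g/\h$ matches that on $G/H$ (so that Rosenlicht applies and $\trdeg\,\bbk(\g/\h)^H = c_G(G/H)+(\text{rank-type correction})$).

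\smallskip
\noindent\textbf{The main obstacle.} The delicate point is the identification of $\ind$ of the generic isotropy subalgebra $\h^{v^*}$ and showing the two contributions add up exactly to $\rk\g+2c_G(G/H)$ rather than merely giving an inequality — i.e. upgrading the general contraction bound $\ind\q_{(0)}\ge\ind\g$ to an equality statement with the correct complexity term. Concretely, I expect the hard work to be in proving that for generic $v^*$ the subalgebra $\h^{v^*}$ is ``nice enough'' (e.g. its own index behaves predictably, perhaps via a reduction to a smaller reductive group à la Vinberg's ``doubled'' construction $c_G(G/H)=\frac12(\dim\g/\h-\dim\,\text{generic orbit})$-type identities), and in handling the quasi-affine hypothesis to rule out pathologies where $\bbk[\g/\h]^H$ is too small. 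The ``in particular'' clause then follows immediately: $c_G(G/H)=0$ iff $H$ is spherical, which by the formula is equivalent to $\ind\q_{(0)}=\rk\g=\ind\g$.
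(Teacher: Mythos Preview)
Your skeleton is exactly the paper's: apply the Ra\"\i s formula
\[
\ind(\h\ltimes(\g/\h)^{\sf ab})=\trdeg\bbk(\h^\perp)^H+\ind\es,
\]
where $\es$ is the generic $\h$-stabiliser for the coisotropy representation on $\h^\perp\simeq(\g/\h)^*$, and then relate the right-hand side to $c_G(G/H)$. What you are missing is the precise role of quasi-affineness, and you have it in the wrong place. It is \emph{not} used to control the invariant theory of $\g/\h$ or to make Rosenlicht applicable (Rosenlicht needs no hypothesis). The single point where quasi-affineness enters is this: if $G/H$ is quasi-affine, then the generic stabiliser $\es$ is \emph{reductive} (this is the cited result \cite[Theorem\,2.2.6]{these-p}), and therefore $\ind\es=\rk\es$. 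Without this you cannot replace $\ind\es$ by a rank, and your ``reductive-type'' hope remains just that.

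Once $\ind\es=\rk\es$, the second ingredient you need is the exact formula linking complexity to the coisotropy data:
\[
2c_G(G/H)=\trdeg\bbk(\h^\perp)^H-\rk\g+\rk\es
\]
(see \cite[Cor.\,2.2.9]{these-p}). Substituting this into the Ra\"\i s formula gives $\ind(\h\ltimes(\g/\h)^{\sf ab})=\rk\g+2c_G(G/H)$ immediately, with no further ``hard work'' required. So the obstacle you flagged dissolves once you know (i) quasi-affine $\Rightarrow$ $\es$ reductive, and (ii) the displayed complexity formula; both are quotable results, not things to reprove.
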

\begin{proof}
For the affine homogeneous spaces, a proof is given in \cite[Prop.\,9.3]{p05}. Here we demonstrate that
that proof actually applies in the general quasi-affine setting.

Let $\h^\perp$ be the annihilator of $\h$ in the dual space $\g^*$. It is an $H$-submodule of $\g^*$
that is called the {\it coisotropy representation\/} of $H$. Here $\h^\perp$ and $\g/\h$ are dual
$\h$-modules. (If $\Phi$ is a $G$-invariant bilinear form on $\g$, then one can identify $\g^*$ and $\g$
using $\Phi$,
and consider $\h^\perp$ as a subspace of $\g$.)

Let $\bbk(\h^\perp)^H$ denote the subfield of $H$-invariants in $\bbk(\h^\perp)$. The Ra\"is formula
for the index of semi-direct products~\cite{rais} asserts that
$\ind(\h\ltimes (\g/\h)^{\sf ab})=\trdeg\bbk(\h^\perp)^H + \ind \es$,
where $\es$ is the $\h$-stabiliser of a generic point in $\h^\perp$. (Here we use the fact that
$\g/\h$ and $\h^{\perp}$ are dual $H$-modules.)
Since $G/H$ is quasi-affine, $\es$ is reductive \cite[Theorem\,2.2.6]{these-p}.
Hence $\ind (\h\ltimes (\g/\h)^{\sf ab})=\trdeg\bbk(\h^\perp)^H+\rk\es$. Moreover, there is a formula for
$c_G(G/H)$ in terms of the action $(H:\h^\perp)$. Namely,
$2c_G(G/H)=\trdeg\bbk(\h^\perp)^H- \rk\g+ \rk\es$ \cite[Cor.\,2.2.9]{these-p}. Whence the conclusion.
\end{proof}

\begin{rmk}    \label{rem:parab-contr}
If $P\subset G$ is a parabolic subgroup, then $G/P$ is {\bf not} quasi-affine. However, it is proved
in~\cite[Theorem\,4.1]{alafe2} that $\ind(\p\ltimes (\g/\p)^{\sf ab})=\rk\g$. For a  Borel subgroup $B$,
this appeared already in~\cite[Cor.\,3.5]{alafe}. The reason is that the Ra\"\i s formula readily implies
that $\ind(\p\ltimes (\g/\p)^{\sf ab})=\ind\g^e$, where $e\in \p^{\sf nil}$ is a Richardson element and
$\p^{\sf nil}$ is the nilradical of $\p$.
The famous {\it Elashvili conjecture} asserts that $\ind\g^e=\rk\g$ for any $e\in \g$.
For the Richardson elements, a conceptual proof of the Elashvili conjecture is given in \cite{CM}.
\end{rmk}

\begin{rmk}    \label{Dima-T}
In an earlier version of this article, we conjectured that $\ind (\h\ltimes (\g/\h)^{\sf ab})=\ind \g$ \ for
{\it\bfseries any\/} spherical subalgebra $\h$. Having heard from us about this problem,
D.\,Timashev
informed us that combining
some results of Knop~\cite{knop}, the Elashvili conjecture, and the scheme of proof of
Theorem~\ref{thm:c=0}, one can extend Theorem~\ref{thm:c=0} to
{\bf arbitrary} homogeneous spaces $G/H$. This general argument is outlined below. We are grateful to
Timashev for providing necessary details.

Let $\mathsf T^*(G/H)=G\times^H\h^\perp$ be the cotangent bundle of $G/H$. The generic stabiliser
for the $H$-action on $\h^\perp$ is isomorphic to a generic stabiliser for the $G$-action on
$\mathsf T^*(G/H)$. Let $\es$ be such a stabiliser. (If $G/H$ is quasi-affine, then $\es$ is
reductive. But this is not so in general.) A general description of $\es$, see~\cite[Sect.\,8]{knop}, can be
stated as follows. Fix a maximal torus $T\subset B$ and set $\te=\Lie(T)$.
For a generic $B$-orbit $\co$ in $G/H$, consider the parabolic subgroup $P=\{g\in G\mid g(\co)\subset\co\}\supset B$. Let $\Gamma\in \mathfrak X(T)$ be
the lattice of weights of all $B$-semi-invariants in the field $\bbk(G/H)$ and $\te_0$ the Lie algebra of
$\Ker\Gamma\subset T$. The rank of $\Gamma$ is called the {\it rank\/} of $G/H$, denoted $r_G(G/H)$.
Set $\ah=\te_0^\perp$, the orthogonal complement w.r.t. $\Phi\vert_\te$ and
consider the Levi subgroup $M=Z_G(\ah)\subset G$. The weights in $\Gamma$ can be regarded as
characters of $M$ and we consider the identity component of their common kernel as a subgroup of
$M$, denoted $M_0$. Clearly, $M_0$ is reductive. Write $\ma_0\subset \ma$ for their Lie algebras.
\\ \indent
Let $P_-$ be the opposite to $P$ parabolic subgroup and $\p_-^{\sf nil}$  the nilradical
of $\p_-=\Lie(P_-)$. Then $M_0\cap P_-$ is a parabolic subgroup of
$M_0$ and Knop's description boils down to the assertion that
$\es$ is the generic stabiliser for the linear action of
$M_0\cap P_-$ on $\ma\cap \p_-^{\sf nil}=\ma_0\cap \p_-^{\sf nil}$.
It is noticed by Timashev that $\es\subset\ma_0$ is actually the stabiliser in $\ma_0$ of a Richardson
element in $\ma_0\cap\p_-^{\sf nil}=(\ma_0\cap\p_-)^{\sf nil}$. Hence $\ind\es=\rk M_0$ by the Elashvili conjecture. By the Ra\"is formula, we have
\beq         \label{eq:rais}
     \ind(\h\ltimes (\g/\h)^{\sf ab})=\trdeg\bbk(\h^\perp)^H + \ind \es .
\eeq
The general theory developed in~\cite{knop,p90} implies that
$\trdeg\bbk(\h^\perp)^H=2c_G(G/H)+r_G(G/H)$. 
The last ingredient is that, by the very construction of $M_0$, one has $\rk M_0=\rk\g- r_G(G/H)$. Gathering the above formulae, we obtain
$2c_G(G/H)+\rk\g$ in the right-hand side of~\eqref{eq:rais}.
\end{rmk}

Associated with the vector space sum $\g=\h\oplus V$, one has the bi-homogeneous decomposition
of any homogeneous $H\in \gS(\g)$:
\[
    \textstyle H=\sum_{i=0}^{d} H_{i,d-i} \ ,
\]
where $d=\deg H$ and $H_{i,d-i}\in \gS^i(\h)\otimes \gS^{d-i}(V)\subset \gS^{d}(\g)$. Then $(i,d-i)$ is
the {\it bi-degree\/} of $H_{i,d-i}$.
Let $H^\bullet$ denote the nonzero bi-homogeneous component of $H$ with
maximal $V$-degree. Then $\deg_{V}\! H=\deg_{V} H^\bullet$.
Similarly, $H_{\bullet}$ stands for the nonzero bi-homogeneous component of $H$ with
maximal $\h$-degree, i.e., minimal $V$-degree.

It is known that if $H\in \cz\gS(\g)$, then $H^\bullet\in \cz\gS(\h\ltimes V^{\rm ab})$~\cite[Prop.\,3.1]{coadj}.
However, it is not always the case that $\cz\gS(\h\ltimes V^{\rm ab})$ is generated by the functions of the
form $H^\bullet$ with $H\in \cz\gS(\g)$.
Let $\{H_1,\dots,H_l\}$, $l=\rk\g$,  be a set of homogeneous algebraically independent generators of
$\gS(\g)^\g$ and $d_i=\deg H_i$. Then $\sum_{i=1}^l d_i=\bb(\g)$.

\begin{df}     \label{def:ggs0}
We say that $H_1,\dots,H_l$ is an $\h$-{\it good generating system} in $\gS(\g)^\g$
(=\,$\h$-{\it {\sf g.g.s.}\/} for short) 
if $H_1^\bullet,\dots,H_l^\bullet$ are algebraically independent.
\end{df}

The importance of {\sf g.g.s.} is readily seen in the following fundamental results.

\begin{thm}[{\cite[Theorem\,3.8]{contr}}]    \label{thm:kot14}
Let $H_1,\dots,H_l$ be an arbitrary set of homogeneous algebraically independent generators of\/
$\gS(\g)^\g$ and $\g=\h\oplus V$. Then
\begin{itemize}
\item[\sf (i)] \ $\sum_{j=1}^l \deg_{V}\! H_j\ge \dim V$;
\item[\sf (ii)] \  $H_1,\dots,H_l$ is an\/ $\h$-{\sf g.g.s.} if and only if\/ $\sum_{j=1}^l \deg_V \! H_j=\dim V$.
\end{itemize}
\end{thm}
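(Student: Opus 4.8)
The plan is to reduce both statements to a comparison of Poincar\'e series (or, equivalently, to a dimension count of a graded algebra). The key player is the associated graded algebra: if we filter $\gS(\g)$ by the $V$-degree (equivalently, use the grading coming from the contraction $\vp_s$ as $s\to 0$), then passing to the associated graded identifies $\gr\gS(\g)=\gS(\g_{(0)})$, where $\g_{(0)}=\h\ltimes V^{\sf ab}$, and the symbol of an invariant $H\in\gS(\g)^\g$ with maximal $V$-degree is exactly $H^\bullet\in\cz\gS(\g_{(0)})$, by the cited \cite[Prop.\,3.1]{coadj}. So the first step is: identify $\deg_V H_j$ with the degree of the leading term $H_j^\bullet$, and note that the $H_j^\bullet$ lie in the (graded) Poisson centre of $\gS(\g_{(0)})$.

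For part \textsf{(i)}, the inequality $\sum_j \deg_V H_j \ge \dim V$ should follow from a transcendence-degree/codimension estimate. The idea is that $H_1^\bullet,\dots,H_l^\bullet$ generate a subalgebra of $\cz\gS(\g_{(0)})$; even if they are algebraically dependent, one controls the size of the common zero locus of the full collection $\{H_j^\bullet\}$ together with $V^{\sf ab}$. Concretely, on the subspace $\h\subset\g^*_{(0)}$ (the annihilator of $V^{\sf ab}$ inside $\g_{(0)}^*$, which carries the coadjoint action of $H$), each $H_j^\bullet$ restricts to a polynomial of degree $\deg H_j - \deg_V H_j$; since $H_j$ restricted to $\h^*\subset\g^*$ already gives back (up to lower terms) the generators there, a dimension count forces $\sum_j(\deg H_j-\deg_V H_j)\le \dim\h$ is \emph{too weak}, so instead one argues on $V$: the differentials $\textsl d H_j^\bullet$ at a generic point must span a subspace transverse to $\h$ of dimension at least $\dim V - \ind\g_{(0)} + \ind\g$, and bounding the number of independent components by the degrees yields the inequality. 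The cleanest route is: $\sum_j \deg_V H_j \ge \dim V$ is equivalent, via $\sum_j\deg H_j = \bb(\g)$ and $\dim V = \dim\g-\dim\h$, to $\sum_j(\deg H_j - \deg_V H_j)\le \bb(\g)-\dim V$, and the left side is at least the transcendence degree contribution of the ``$\h$-parts''; I would make this precise using that $\cz\gS(\g_{(0)})$ has transcendence degree $\ge \ind\g_{(0)}\ge\ind\g=l$ and the generators $H_j^\bullet$ are among the natural candidates.

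For part \textsf{(ii)}, the equality $\sum_j\deg_V H_j=\dim V$ must be shown equivalent to the algebraic independence of $H_1^\bullet,\dots,H_l^\bullet$. One direction is immediate: if the $H_j^\bullet$ are algebraically independent, then their Jacobian (in suitable coordinates) has generic rank $l$, and a degree-counting argument on the locus where they all vanish, combined with \textsf{(i)}, pins the sum down to exactly $\dim V$ --- here one uses that the codimension of the common zero set of $l$ algebraically independent polynomials equals $l$ and compares with the codimension forced by the degrees. Conversely, if $\sum_j\deg_V H_j=\dim V$, then the leading components $H_j^\bullet$, whose $V$-degrees sum to exactly $\dim V$, must be ``as independent as possible'': assuming a nontrivial algebraic relation among them would (by differentiating the relation and examining $V$-degrees, or by a Poincar\'e-series computation for the subalgebra they generate) force $\sum_j\deg_V H_j > \dim V$, a contradiction. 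The main obstacle is the backward direction of \textsf{(ii)} --- turning ``degrees add up correctly'' into genuine algebraic independence --- and I expect it to hinge on a careful analysis of the generic rank of the matrix $\big(\partial H_j^\bullet/\partial v_k\big)$ of partials with respect to a basis $v_1,\dots,v_{\dim V}$ of $V$, showing that minimality of $\sum_j\deg_V H_j$ (from \textsf{(i)}) forces this matrix to have full rank $l$ at a generic point, which is exactly algebraic independence of the $H_j^\bullet$.
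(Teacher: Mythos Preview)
First, note that the paper does not prove this statement; it is quoted from \cite[Theorem~3.8]{contr}, so there is no in-paper argument to compare against. Your proposal does not reconstruct that proof. You correctly set up the contraction and observe that $H_j^\bullet\in\cz\gS(\g_{(0)})$, but each attempted argument for~{\sf (i)} peters out: the transcendence-degree remarks never connect $\ind\g_{(0)}$ to $\sum_j\deg_V H_j$, and after your (correct) reformulation $\sum_j(d_j-\deg_V H_j)\le\bb(\g)-\dim V$ you say the left side ``is at least'' something, which is the wrong direction for the bound you need. For~{\sf (ii)} you rightly isolate the question of whether $\textsl dH_1^\bullet\wedge\cdots\wedge\textsl dH_l^\bullet\ne 0$, but give no mechanism tying this to the numerical condition $\sum_j \deg_V H_j=\dim V$; examining only the rank of $(\partial H_j^\bullet/\partial v_k)$ cannot detect algebraic independence of the $H_j^\bullet$, since a dependence could involve the $\h$-variables.

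The ingredient you are missing is a direct computation of the $V$-degree of $J:=\textsl dH_1\wedge\cdots\wedge\textsl dH_l\in\bigwedge^l\g\otimes\gS(\g)$. Write $D_j=\deg_V H_j$. For reductive $\g$ one has $J=c\,{\ast}\pi^{\wedge r}$ for some $c\in\bbk^\times$, where $r=(\dim\g-l)/2$ and $\pi$ is the Poisson bivector: both sides are $\bigwedge^l\g$-valued polynomials of the same degree $r$, at every regular $\xi$ each spans the line $\bigwedge^l\g^\xi$, and the coefficients of each have trivial common divisor since their common zero locus $\g^*_{\sf sing}$ has codimension $\ge 3$. Now grade by $V$-degree. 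The top $V$-component of $\pi$ is exactly the Poisson tensor $\pi_0$ of $\g_{(0)}$, so ${\ast}\pi^{\wedge r}$ has $V$-degree $\dim V$ with top component ${\ast}\pi_0^{\wedge r}$, which is nonzero precisely when $\ind\g_{(0)}=l$. Under that hypothesis $\deg_V J=\dim V$; since the component of $J$ of $V$-degree $\sum_j D_j$ is $\textsl dH_1^\bullet\wedge\cdots\wedge\textsl dH_l^\bullet$, parts~{\sf (i)} and~{\sf (ii)} follow at once. Note that the hypothesis $\ind\g_{(0)}=\ind\g$ is genuinely needed (for $\h=0$ in $\tri$ one gets $\sum_j D_j=2<3=\dim V$, so~{\sf (i)} fails); the paper's phrasing suppresses it, but it is in force in all the applications here.
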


Furthermore, if the contraction $\g\leadsto \g_{(0)}=\h\ltimes (\g/\h)^{\sf ab}$ has some extra properties,
then the existence of an $\h$-{\sf g.g.s.} provides the generators of
$\cz\gS(\g_{(0)})$. More precisely, Theorem~3.8(iii) in \cite{contr} yields the following:

\begin{thm}    \label{thm:h-ggs+codim2}
Suppose that $\g_{(0)}=\h\ltimes (\g/\h)^{\sf ab}$ has the {\sl codim}--$2$ property and\/
$\ind\g_{(0)}=\ind\g$. If  there is an $\h$-{\sf g.g.s.} $H_1,\dots,H_l$ in $\gS(\g)^\g$,  then
$H_1^\bullet,\dots,H_l^\bullet$ \ freely generate $\gS(\g_{(0)})^{\g_{(0)}}$. In particular,
$\gS(\g_{(0)})^{\g_{(0)}}$ is a polynomial ring.
\end{thm}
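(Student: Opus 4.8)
The statement is Theorem~3.8(iii) of \cite{contr}, so the plan is to deduce it from the general polynomiality machinery developed there; I sketch how that argument runs.

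First I would record the bookkeeping. By \cite[Prop.\,3.1]{coadj} each $H_i^\bullet$ lies in $\cz\gS(\g_{(0)})=\gS(\g_{(0)})^{\g_{(0)}}$; by hypothesis the $H_i^\bullet$ are algebraically independent, and there are $l=\rk\g=\ind\g=\ind\g_{(0)}$ of them. Since $H_i^\bullet$ is a bi-homogeneous component of $H_i$, one has $\deg H_i^\bullet=\deg H_i=d_i$, so $\sum_i\deg H_i^\bullet=\sum_i d_i=\bb(\g)$; and as $\dim\g_{(0)}=\dim\g$ and $\ind\g_{(0)}=\ind\g$, also $\bb(\g_{(0)})=\bb(\g)$, whence $\sum_i\deg H_i^\bullet=\bb(\g_{(0)})$. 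Thus everything reduces to the following general fact, applied with $\q=\g_{(0)}$ and $f_i=H_i^\bullet$: \emph{if $\q$ is an algebraic Lie algebra with the {\sl codim}--$2$ property and $f_1,\dots,f_l\in\gS(\q)^\q$, $l=\ind\q$, are algebraically independent homogeneous elements with $\sum_i\deg f_i=\bb(\q)$, then $\gS(\q)^\q=\bbk[f_1,\dots,f_l]$, a polynomial ring.}

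For the general fact the key object is $\omega:=\textsl{d}f_1\wedge\dots\wedge\textsl{d}f_l\in\Omega^l$, whose coefficients are the $l\times l$ Jacobian minors of $f_1,\dots,f_l$, homogeneous of degree $\sum_i(d_i-1)=\bb(\q)-l$; let $Y$ be its zero locus, i.e.\ the set where $\textsl{d}_\xi f_1,\dots,\textsl{d}_\xi f_l$ are linearly dependent (so $Y\ne\q^*$, by algebraic independence). I would then compare $\omega$ with the $l$-form $\rho$ corresponding, under the identification $\cW^{2k}\simeq\Omega^l$ with $2k=\dim\q-\ind\q$, to the top Poisson power $\pi^{\wedge k}\in\cW^{2k}$: since $\pi$ has linear coefficients, $\rho$ is homogeneous of degree $k=\bb(\q)-l$, and $\rho(\xi)\ne 0$ precisely when $\xi\in\q^*_{\sf reg}$. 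Because an invariant differential $\textsl{d}_\xi f$ always lies in $\q^\xi$, at a regular $\xi$ both $\omega(\xi)$ and $\rho(\xi)$ lie in the line $\bigwedge^l\q^\xi$ (up to a fixed volume twist), so $\omega=\boldsymbol q\,\rho$ for some $\boldsymbol q\in\bbk(\q^*)$ regular on $\q^*_{\sf reg}$; by the {\sl codim}--$2$ property and normality of $\q^*$, $\boldsymbol q\in\gS(\q)$, and $\deg\boldsymbol q=(\bb(\q)-l)-k=0$, so $\boldsymbol q\in\bbk^\times$. Hence $Y=Z(\rho)=\q^*_{\sf sing}$, which has codimension $\ge 2$. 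Now the standard differential criterion finishes it: for $h\in\gS(\q)^\q$ and $\xi\in\q^*_{\sf reg}$ one has $\textsl{d}_\xi h\in\q^\xi=\langle\textsl{d}_\xi f_1,\dots,\textsl{d}_\xi f_l\rangle$, so $\textsl{d}h=\sum_i c_i\,\textsl{d}f_i$ with $c_i$ regular on $\q^*_{\sf reg}$ by Cramer's rule, hence $c_i\in\gS(\q)$; the classical lemma that $\textsl{d}h\in\sum_i\gS(\q)\,\textsl{d}f_i$ with the $f_i$ algebraically independent forces $h\in\bbk[f_1,\dots,f_l]$ (using that $\gS(\q)$ is a UFD together with a degree comparison), and therefore $\gS(\q)^\q\subseteq\bbk[f_1,\dots,f_l]\subseteq\gS(\q)^\q$.

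I expect the main obstacle to be the production of the identity $\omega=\boldsymbol q\,\rho$ together with the degree count and the extension of $\boldsymbol q$ across $\q^*_{\sf sing}$, since this is exactly the point where both the {\sl codim}--$2$ hypothesis and the exact equality $\sum_i\deg f_i=\bb(\q)$ are used in an essential way; once $\codim Y\ge 2$ is secured the remaining steps are routine. Since all of this is carried out in \cite[Thm 3.8]{contr}, in the paper itself it would suffice to invoke that theorem together with the two observations $\deg H_i^\bullet=d_i$ and $\bb(\g_{(0)})=\bb(\g)$.
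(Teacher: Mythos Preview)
Your identification is exactly right: in the paper this theorem carries no proof at all, it is simply quoted as Theorem~3.8(iii) of \cite{contr}, so your opening sentence already matches the paper's ``proof'' in full. The additional sketch you give of the underlying argument from \cite{contr} is essentially correct (the reduction to $\sum_i\deg H_i^\bullet=\bb(\g_{(0)})$ and the comparison $\omega=\boldsymbol q\,\rho$ with $\deg\boldsymbol q=0$ are indeed the heart of that proof); the one place I would tighten is the final step, where instead of the ``classical lemma'' on $\textsl{d}h\in\sum_i\gS(\q)\,\textsl{d}f_i$ it is cleaner to argue that $\codim Y\ge 2$ makes $\bbk[f_1,\dots,f_l]$ algebraically closed in $\gS(\q)$ by \cite[Theorem~1.1]{ppy}, and then $\trdeg\gS(\q)^\q\le\ind\q=l$ forces $\gS(\q)^\q=\bbk[f_1,\dots,f_l]$.
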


\section{$2$-splittings of $\g$ and Poisson-commutative subalgebras}
\label{sect:3}

\noindent
Let $\g$ be a semisimple Lie algebra. The sum $\g=\h\oplus\rr$ is called a {\it $2$-splitting of} $\g$,
if both summands are Lie subalgebras.  Then $\g^*$ acquires the decomposition $\g^*=\h^*\oplus\rr^*$,
where $\rr^*=\Ann\!(\h)=\h^\perp$, $\h^*=\Ann\!(\rr)=\rr^\perp$.
Given a $2$-splitting $\g=\h\oplus\rr$, one can consider two {\sf IW}-contractions. Here either
subalgebra is the preferred complement to the other, so we write
$\h\ltimes\rr^{\sf ab}$ and $\rr\ltimes\h^{\sf ab}$ for these contractions. The important feature of this situation is that the corresponding Poisson-brackets are compatible and their non-trivial linear combinations define Lie algebras isomorphic to $\g$.

If $x=x_{\h}+x_{\rr}\in \g$, then the Lie--Poisson bracket on $\g^*$ is decomposed as follows
\[
\{x,y\}=\underbrace{[x_\h,y_\h]+ [x_{\h},y_{\rr}]_{\rr} + [x_{\rr},y_{\h}]_{\rr}}_{\{x,y\}_{0}} +
\underbrace{[x_{\h},y_{\rr}]_{\h} + [x_{\rr},y_{\h}]_{\h} + [x_{\rr},y_{\rr}]}_{\{x,y\}_{\infty}}.
\]
Here the bracket $\{\,\,,\,\}_{0}$ (resp. $\{\,\,,\,\}_{\infty}$) corresponds to $\g_{(0)}=\h\ltimes\rr^{\sf ab}$
(resp. $\g_{(\infty)}=\rr\ltimes\h^{\sf ab}$).
Using this decomposition, we introduce a $1$-parameter family of Poisson brackets on $\g^*$:
\[
   \{\,\,,\,\}_{t}=\{\,\,,\,\}_{0}+t\{\,\,,\,\}_{\infty},
\]
where $t\in \BP=\bbk\cup\{\infty\}$ and we agree that $\{\,\,,\,\}_{\infty}$ is the Poisson bracket
corresponding to $t=\infty$.
It is easily seen that  $\{\,\,,\,\}_{t}$ with $t\in \bbk^\times$ is given by the map $\vp_t$,
see Eq.~\eqref{eq:fi_s}. By \cite[Lemma~1.2]{oy}, all these brackets are compatible.
Write $\g_{(t)}$ for the Lie algebra corresponding to $ \{\,\,,\,\}_{t}$. Of course, we merely write $\g$ in
place of $\g_{(1)}$. All Lie algebras $\g_{(t)}$ have the same underlying vector space $\g$.

{\bf Convention~1.}
We often identify $\g$  with $\g^*$  via the Killing form on $\g$. We also think of $\g^*$ as the dual of
any algebra $\g_{(t)}$ and usually omit the subscript `$(t)$' in $\g_{(t)}^*$. However, if $\xi\in\g^*$,
then the stabiliser of $\xi$ in the Lie algebra $\g_{(t)}$ (i.e., with respect to the coadjoint representation
of $\g_{(t)}$) is denoted by $\g_{(t)}^\xi$.

Let $\pi_t$ be the Poisson tensor for $\{\,\,,\,\}_{t}$ and
$\pi_t(\xi)$  the skew-symmetric bilinear form on $\g\simeq T^*_\xi(\g^*)$ corresponding to $\xi\in\g^*$,
cf. Section~\ref{sect:prelim}. A down-to-earth description is that
$\pi_t(\xi)(x_1,x_2)=\{x_1,x_2\}_{(t)}(\xi)$. Set $\rk\pi_t=\max_{\xi\in\g^*}\rk\pi_t(\xi)$.

If $t\ne 0, \infty$, then $\g_{(t)}\simeq \g$ and hence $\ind\g_{(t)}=\ind\g=\rk\g$.

For each Lie algebra
$\g_{(t)}$, there is the related singular set $\g^*_{(t),\sf sing}=\g^*\setminus \g^*_{(t),\sf reg}$\,,
cf.~Eq.~\eqref{eq:regul-set}. Then, clearly,
\[
        \g^*_{(t),\sf sing}=\{\xi\in\g^* \mid \rk \pi_t(\xi)< \rk \pi_t\} ,
\]
which is the union of the symplectic $\g_{(t)}$-leaves in $\g^*$ having a non-maximal dimension.
For aesthetic reasons, we write $\g^*_{\infty,\sf sing}$ instead of $\g^*_{(\infty),\sf sing}$.

Let $\cz_t$ denote the centre of the Poisson algebra $(\gS(\g), \{\,\,,\,\}_{t})$. Formally,
$\cz_t=\gS(\g_{(t)})^{\g_{(t)}}$.
Then $\cz_1=\gS(\g)^{\g}$.
For $\xi\in\g^*$, let $\textsl{d}_\xi F\in\g$ denote the differential of $F\in\gS(\g)$ at $\xi$. It is a standard
fact that, for any $H\in\gS(\g)^\g$, $\textsl{d}_\xi H$ belongs to $\z(\g^\xi)$, where $\z(\g^\xi)$ is the centre of $\g^\xi$.
\\ \indent
Let $\{H_1,\dots,H_l\}$ be a set of homogeneous algebraically independent generators of $\gS(\g)^\g$.
By the {\it Kostant regularity criterion\/} for
$\g$, $\textsl{d}_\xi H_1,\dots,\textsl{d}_\xi H_l$ are linearly independent if and only if $\xi\in\g^*_{\sf reg}$,
see~\cite[Theorem~9]{ko63}. Therefore
\beq     \label{eq:ko-re-cr}
\text{ $\langle \textsl{d}_\xi H_j \mid 1\le j\le l \rangle_{\bbk}=\g^\xi$ \ if and only if \ $\xi\in\g^*_{\sf reg}$.}
\eeq
(Recall that $\g^\xi=\z(\g^\xi)$ if and only if $\xi\in\g^*_{\sf reg}$~\cite[Theorem\,3.3]{p03}.)
For $\xi\in\g^*$, set $\textsl{d}_\xi \cz_t=\left<\textsl{d}_\xi F\mid F\in\cz_t\right>_{\bbk}$.
Then $\textsl{d}_\xi \cz_t \subset \ker \pi_t(\xi)$ for each $t$.
The regularity criterion obviously  holds for any $t\ne 0,\infty$.
That is,
\beq          \label{span-dif}
\text{for }\ t\ne0,\infty, \ \text{ one has  }\  \xi\in\g^*_{(t),\sf reg} \
\Leftrightarrow \
\textsl{d}_\xi \cz_t =\ker \pi_t(\xi) \Leftrightarrow \ \dim \ker \pi_t(\xi)=\rk\g .
\eeq
{\bf Remark.} The same property holds for $t=0$ in some particular cases considered in~\cite[Sections\,4 \& 5]{contr}, which also occur below. For instance, if $(\h,\rr)$ is either $(\be,\ut_-)$, see Section~\ref{sect:b-n},
or $(\be,\g_0)$, see Section~\ref{sect:g0-b} for details.

\subsection{The non-degenerate case}        \label{subs:2Sph}
Let us say that a $2$-splitting is  {\it non-degenerate}, if $\ind\g_{(0)}=\ind\g_{(\infty)}=\rk\g$ and thereby
$\BP_{\sf reg}=\BP$. This is equivalent to that
both subalgebras $\h$ and $\rr$ are spherical, see Theorem~\ref{thm:c=0} and
Remark~\ref{Dima-T}.

Clearly, $\{\cz_t,\cz_{t'}\}_t=0=\{\cz_t,\cz_{t'}\}_{t'}$ for all $t,t'\in\BP$. 
If $t\ne t'$, then each bracket $ \{\,\,,\,\}_s$ is a linear combination of $ \{\,\,,\,\}_{t}$ and $ \{\,\,,\,\}_{t'}$.
Hence $\{\cz_t,\cz_{t'}\}_s=0$ for all $s\in\BP$.  By continuity, this ensures that
$\ker\pi_{t'}(\xi) = \lim_{t\to t'} \textsl{d}_{\xi} \cz_t$ for each $\xi\in\gt g^*_{(t'),\sf reg}$,
cf.~\cite[Appendix]{oy}. Using this one shows that
the centres
$\cz_t$ ($t\in \BP$) generate a {\sf PC}-subalgebra of $\gS(\g)$ with respect to any bracket $\{\,\,,\,\}_t$,
$t\in\BP$. Write $\gZ_{\langle\h,\rr\rangle}:=\mathsf{alg}\langle\cz_t\rangle_{t\in\BP}$ for this subalgebra.
For each $\xi\in\gt g^*$, the space $\textsl{d}_\xi\gZ_{\langle\h,\rr\rangle}$ is the linear span of
$\textsl{d}_\xi \cz_t$ with $t\in\BP$.
In~\cite{bols}, Bolsinov outlined a method for estimating the dimension of such subspaces.
A rigorous presentation is contained in Appendices~\cite{mrl,oy}, which is going to be used in the following proof.

\begin{thm}             \label{thm:dim-Z}
Given a non-degenerate $2$-splitting $\g=\h\oplus\rr$,
\begin{itemize}
\item[\sf (1)] there is a dense open subset $\Omega\in\g^*$ such that $\dim\ker \pi_t(\xi)=\rk\g$ for all
$\xi\in\Omega$ and $t\in\BP$;
\item[\sf (2)] \ for all $\xi\in\Omega$, one has $\dim \textsl{d}_\xi\gZ_{\langle\h,\rr\rangle}=\bb(\g)$ and hence $\trdeg \gZ_{\langle\h,\rr\rangle}=\bb(\g)$.
\end{itemize}
\end{thm}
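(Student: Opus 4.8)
The plan is to derive both assertions from the structural results established earlier, using the Bolsinov-type dimension estimate as the central tool. For part (1), the key point is that, by non-degeneracy, $\ind\g_{(0)}=\ind\g_{(\infty)}=\ind\g_{(t)}=\rk\g$ for every $t\in\BP$ (for $t\ne 0,\infty$ this is automatic since $\g_{(t)}\simeq\g$; for $t=0,\infty$ it is the hypothesis, which by Theorem~\ref{thm:c=0} and Remark~\ref{Dima-T} is equivalent to both $\h$ and $\rr$ being spherical). Hence $\BP_{\sf reg}=\BP$, so for each fixed $t$ the set $\g^*\setminus\g^*_{(t),\sf sing}=\{\xi\mid\dim\ker\pi_t(\xi)=\rk\g\}$ is dense and open. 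First I would observe that $\rk\pi_t(\xi)$, viewed as a function of $(t,\xi)\in\BP\times\g^*$, is the rank of a matrix whose entries depend polynomially (after clearing denominators near $t=\infty$, or working in the two affine charts $t$ and $t^{-1}$) on the coordinates; consequently the locus where the rank is maximal, i.e.\ equal to $\dim\g-\rk\g$, is open in $\BP\times\g^*$ and, by the above, meets every fibre $\{t\}\times\g^*$. Since $\BP$ is irreducible, the projection of the complement to $\g^*$ is a proper closed subset (a finite union of the images of the "bad" components, each of which cannot dominate $\g^*$). Taking $\Omega$ to be the complement of this proper closed set gives a dense open subset of $\g^*$ with $\dim\ker\pi_t(\xi)=\rk\g$ for all $\xi\in\Omega$ and all $t\in\BP$ simultaneously. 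This proves~(1).

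For part (2), fix $\xi\in\Omega$. By the remark in~\eqref{span-dif} (valid for $t\ne 0,\infty$) and the limiting relation $\ker\pi_{t'}(\xi)=\lim_{t\to t'}\textsl{d}_\xi\cz_t$ for $\xi\in\g^*_{(t'),\sf reg}$ noted just before the theorem, we have $\textsl{d}_\xi\cz_t=\ker\pi_t(\xi)$ for every $t\in\BP$ with $\xi\in\Omega$; in particular each $\textsl{d}_\xi\cz_t$ has dimension exactly $\rk\g$. Thus $\textsl{d}_\xi\gZ_{\langle\h,\rr\rangle}=\sum_{t\in\BP}\ker\pi_t(\xi)$. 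The plan is now to invoke the Bolsinov estimate from the appendices of~\cite{mrl,oy}: for a pencil $\pi_0+t\pi_\infty$ of compatible skew forms on a space of dimension $n=\dim\g$ all of whose members (over $\BP$) have corank equal to the generic corank $r=\rk\g$, the sum $\sum_{t\in\BP}\ker\pi_t(\xi)$ has dimension $\tfrac12(n+r)$ at a generic point. Applying this with $n=\dim\g$, $r=\rk\g$ yields $\dim\textsl{d}_\xi\gZ_{\langle\h,\rr\rangle}=\tfrac12(\dim\g+\rk\g)=\bb(\g)$, possibly after shrinking $\Omega$ once more to the generic locus where the estimate is sharp (still dense open). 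Finally, $\trdeg\gZ_{\langle\h,\rr\rangle}=\max_{\xi\in\g^*}\dim\textsl{d}_\xi\gZ_{\langle\h,\rr\rangle}$, and since this maximum is $\ge\bb(\g)$ by the above while the reverse inequality is the general upper bound $\trdeg\gZ_{\langle\h,\rr\rangle}\le\bb(\g)$ recorded in the introduction (Poisson-commutativity of $\gZ_{\langle\h,\rr\rangle}$ with respect to $\{\,\,,\,\}=\{\,\,,\,\}_1$), we get equality.

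The main obstacle is the Bolsinov-type lower bound $\dim\sum_{t\in\BP}\ker\pi_t(\xi)\ge\bb(\g)$. One has to verify that the hypotheses of the abstract linear-algebra lemma in the appendix of~\cite{oy} are genuinely met here — namely that the pencil is a "Kronecker" pencil at generic $\xi$, i.e.\ that no Jordan (nilpotent) blocks occur, equivalently that the corank is the same constant $\rk\g$ at every $t\in\BP$ rather than jumping. This is exactly what non-degeneracy buys us, so the argument is: (i) non-degeneracy $\Rightarrow$ constant corank over all of $\BP$ at generic $\xi$ $\Rightarrow$ the pencil restricted to $T^*_\xi\g^*$ modulo its "kernel part" is of pure Kronecker type; (ii) for a pencil of pure Kronecker type the sum of the kernels of all members equals the sum of the sizes forced by the Kronecker invariants, which totals $\tfrac12(n+r)$. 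I would spell out step (i) carefully — that the common kernel $\bigcap_t\ker\pi_t(\xi)$ and the quotient pencil behave as required — and then quote step (ii) from~\cite{bols,mrl,oy} rather than reprove it. The rest (semicontinuity of rank, intersecting finitely many dense opens) is routine.
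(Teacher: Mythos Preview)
Your argument for part~{\sf (2)} is essentially the paper's: once every form in the pencil $\{a\pi_0(\xi)+b\pi_\infty(\xi)\}$ has rank $\dim\g-\rk\g$, the Jordan--Kronecker decomposition has no Jordan blocks and the lemma in~\cite[Appendix]{mrl} gives $\dim\sum_{t\in\BP}\ker\pi_t(\xi)=\bb(\g)$ exactly (so no further shrinking of $\Omega$ is needed). One minor over-claim: you assert $\textsl{d}_\xi\cz_t=\ker\pi_t(\xi)$ also for $t=0,\infty$, but the regularity criterion~\eqref{span-dif} is only available for $t\ne 0,\infty$. The paper uses just $\textsl{d}_\xi\gZ_{\langle\h,\rr\rangle}\supset\sum_{t\ne 0,\infty}\ker\pi_t(\xi)$ and then the linear-algebra fact that this partial sum already equals $\sum_{t\in\BP}\ker\pi_t(\xi)$.

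The genuine gap is in part~{\sf (1)}. The claim that the projection of the bad locus $B\subset\BP\times\g^*$ to $\g^*$ is a proper subset does \emph{not} follow from ``$B$ meets every fibre $\{t\}\times\g^*$ in a proper closed subset'' together with irreducibility of $\BP$. A closed $B$ meeting each such fibre in codimension~$1$ has $\dim B\le\dim\g^*$ and can perfectly well surject onto $\g^*$; take for instance $B=\{([s{:}t],\xi)\mid s\,\xi_1=t\,\xi_2\}$ for any two linear coordinates $\xi_1,\xi_2$ on $\g^*$. What is missing is a codimension estimate for $\bigcup_{t\in\bbk^\times}\g^*_{(t),\sf sing}$ \emph{inside} $\g^*$. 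The paper obtains it from the explicit isomorphism $\vp_t$: one has $\xi\in\g^*_{(t),\sf sing}$ iff $\xi_\h+t^{-1}\xi_\rr\in\g^*_{\sf sing}$, so
\[
   \bigcup_{t\in\bbk^\times}\g^*_{(t),\sf sing}\ =\ \{\xi_\h+t\,\xi_\rr\mid \xi_\h+\xi_\rr\in\g^*_{\sf sing},\ t\in\bbk^\times\}
\]
is the image of $\g^*_{\sf sing}\times\bbk^\times$. Since $\g$ is semisimple, $\codim\g^*_{\sf sing}\ge 3$ by Kostant, hence this union has codimension~$\ge 2$ in $\g^*$; its closure $Y$ together with $\g^*_{(0),\sf sing}\cup\g^*_{\infty,\sf sing}$ still has dense open complement, and this complement is the desired $\Omega$. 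The codim--$3$ input is the missing idea; without it your dimension count cannot close.
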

\begin{proof}
{\sf (1)}  Suppose that
\[
    \xi=\xi_\h+\xi_\rr\in \h^*\oplus\rr^*=\g^*.
\]
The presence of the invertible map $\vp_t$ implies that
$\xi\in\g^*_{(t),\sf sing}$  if and only if \
$\xi_\h+t^{-1}\xi_\rr\in\g^*_{\sf sing}$. Therefore,
\beq      \label{eq-cdt}
   \bigcup_{t\ne 0,\infty} \g^*_{(t),\sf sing} =
   \{\xi_\h+t \xi_\rr \mid \xi_\h+\xi_\rr\in\g^*_{\sf sing}, \, t\ne 0,\infty\} .
\eeq
Since $\codim \g^*_{(t),\sf sing}=3$ for each $t\in\bbk^\times$, the closure
$Y:=\ov{\bigcup_{t\ne 0,\infty} \g^*_{(t),\sf sing}}$ is of codimension $2$ in $\g^*$.  Then we have
$\dim\ker \pi_t(\xi)=\rk\g$  for all $t\in \BP$ and all $\xi$ in the dense open subset
\\       \centerline{
$\Omega=\g^*\setminus (Y\cup \g^*_{(0),\sf sing}\cup \g^*_{\infty,\sf sing})$. }
\\[.5ex]
{\sf (2)} \ By definition, $\textsl{d}_\xi\gZ_{\langle\h,\rr\rangle} =\sum_{t\in\BP}\textsl{d}_\xi\cz
\subset \sum_{t\in\BP} \ker \pi_t(\xi)$. Then~\eqref{span-dif} and the hypothesis  on $\xi$ imply that
$\textsl{d}_\xi\gZ_{\langle\h,\rr\rangle}\supset \sum_{t\ne 0,\infty} \ker \pi_t(\xi)$. Here we have
a $2$-dimensional vector space of skew-symmetric bilinear forms
$a{\cdot}\pi_t(\xi)$ on $\g\simeq T^*_\xi \g^*$, where $a\in\bbk$, $t\in \BP$. Moreover,
$\rk \pi_t(\xi)=\dim\g-\rk\g$ for each $t$.
By~\cite[Appendix]{mrl},  $\sum_{t\ne 0,\infty} \ker \pi_t(\xi)=\sum_{t\in\BP} \ker \pi_t(\xi)$
and $\dim \sum_{t\in\BP} \ker \pi_t(\xi) = \rk\g + \frac{1}{2}(\dim\g- \rk\g)=\bb(\g)$.
\end{proof}

Thus, any non-degenerate $2$-splitting $\g=\h\oplus\rr$ provides a Poisson-commutative subalgebra
$\gZ_{\langle\h,\rr\rangle}\subset \gS(\g)$ of maximal transcendence degree.

Let $\{H_1,\dots,H_l\}$, $l=\rk\g$,  be a set of homogeneous algebraically independent generators of
$\gS(\g)^\g$ and $d_j=\deg H_j$. Recall that for any $H_j$, one has the bi-homogeneous decomposition:
\[
    H_j=\sum_{i=0}^{d_j} (H_j)_{i,d_j-i} ,
\]
and $H_j^\bullet$ is the nonzero bi-homogeneous component of $H_j$ with
maximal $\rr$-degree. Then $\deg_{\rr}\! H_j=\deg_{\rr} H_j^\bullet$.
Similarly, $H_{j,\bullet}$ stands for the nonzero bi-homogeneous component of $H_j$ with
maximal $\h$-degree, i.e., minimal $\rr$-degree.

{\bf Convention~2.}  We tacitly assume that the order of summands in the sum $\g=\h\oplus\rr$ is fixed.
This means that, for a homogeneous $H\in\gS(\g)$, we write $H^\bullet$ (resp. $H_\bullet$) for the bi-homogeneous component of maximal degree w.r.t. the second (resp. first) summand.

It is known that $H_j^\bullet\in \cz\gS(\h\ltimes\rr^{\rm ab})$ and
$H_{j,\bullet}\in \cz\gS(\rr\ltimes\h^{\rm ab})$~\cite[Prop.\,3.1]{coadj}.

\begin{thm}    \label{thm:main3-1}
The algebra $\gZ_{\langle\h,\rr\rangle}$ is generated by $\cz_0$, $\cz_\infty$,  and
the set of all bi-homogeneous components of $H_1,\dots,H_l$, i.e.,
\beq   \label{eq:bihom}
   \{(H_j)_{i,d_j-i} \mid  j=1,\dots,l \ \& \ i=0,1,\dots,d_j\}.
\eeq
\end{thm}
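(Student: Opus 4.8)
The plan is to prove $\gZ_{\langle\h,\rr\rangle}=\gA$ by a double inclusion, where $\gA$ denotes the subalgebra of $\gS(\g)$ generated by $\cz_0$, $\cz_\infty$, and the bi-homogeneous components~\eqref{eq:bihom}. The mechanism behind everything is that, for $t\in\bbk^\times$, the centre $\cz_t$ is the image of $\gS(\g)^\g$ under the single explicit automorphism $\vp_t$ of~\eqref{eq:fi_s}, and $\vp_t$ is a scalar on each bi-homogeneous layer.

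First I would record the elementary computation. From Section~\ref{sect:2} one has $\cz_t=\vp_t^{-1}(\gS(\g)^\g)$ for $t\in\bbk^\times$, so $\cz_t$ is generated by $\vp_t^{-1}(H_1),\dots,\vp_t^{-1}(H_l)$; and since $\vp_t$ acts on $\gS^i(\h)\otimes\gS^{d-i}(\rr)$ as multiplication by $t^{d-i}$ (it is the identity on $\h$ and $t{\cdot}\id$ on $\rr$), one gets
\[
   t^{d_j}\,\vp_t^{-1}(H_j)=\sum_{i=0}^{d_j} t^{\,i}\,(H_j)_{i,d_j-i},\qquad 1\le j\le l .
\]
For the inclusion $\gZ_{\langle\h,\rr\rangle}\subseteq\gA$ it then suffices to recall that $\gZ_{\langle\h,\rr\rangle}$ is generated by $\bigcup_{t\in\BP}\cz_t$ and to check $\cz_t\subseteq\gA$ for every $t\in\BP$: this is immediate for $t\in\{0,\infty\}$ since $\cz_0,\cz_\infty$ are among the generators of $\gA$, and for $t\in\bbk^\times$ it follows from the displayed identity, which exhibits each generator $\vp_t^{-1}(H_j)$ of $\cz_t$ as a $\bbk$-linear combination of the components $(H_j)_{i,d_j-i}\in\gA$.

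For the reverse inclusion $\gA\subseteq\gZ_{\langle\h,\rr\rangle}$, I would use that $\cz_0,\cz_\infty\subseteq\gZ_{\langle\h,\rr\rangle}$ by construction, and then recover the bi-homogeneous components by interpolation. Fixing $j$, the left-hand side of the displayed identity is a scalar multiple of an element of $\cz_t\subseteq\gZ_{\langle\h,\rr\rangle}$ for each $t\in\bbk^\times$; since $\bbk$ is infinite, evaluating at $d_j+1$ pairwise distinct points $t_0,\dots,t_{d_j}\in\bbk^\times$ and inverting the Vandermonde matrix $\bigl(t_k^{\,i}\bigr)_{0\le i,k\le d_j}$ expresses each $(H_j)_{i,d_j-i}$ as a $\bbk$-linear combination of the elements $t_k^{d_j}\vp_{t_k}^{-1}(H_j)\in\gZ_{\langle\h,\rr\rangle}$, hence $(H_j)_{i,d_j-i}\in\gZ_{\langle\h,\rr\rangle}$ for all $i$.

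There is no serious obstacle here; the argument is purely formal. The only points requiring care are keeping track of the correct power of $t$ in the displayed identity (so that the coefficients are exactly the bi-homogeneous components, not shifted ones) and invoking that $\bbk$ is infinite so that the Vandermonde interpolation is available. Neither step uses non-degeneracy of the $2$-splitting, so the statement in fact holds for an arbitrary $2$-splitting $\g=\h\oplus\rr$.
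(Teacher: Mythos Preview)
Your proof is correct and follows essentially the same approach as the paper: both use that $\cz_t=\vp_t^{-1}(\gS(\g)^\g)$ for $t\in\bbk^\times$, expand $\vp_t^{\pm 1}(H_j)$ into its bi-homogeneous components with powers of $t$ as coefficients, and invoke the Vandermonde determinant to recover each component. You are simply more explicit about the double inclusion and the interpolation step (and your closing remark that non-degeneracy is not used is a valid observation, though the theorem sits inside the paper's non-degenerate subsection).
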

\begin{proof}
Recall that  $\cz(\{\,\,,\,\}_1)=\cz\gS(\g)=\bbk[H_1,\dots,H_l]$. By the definition of
$\{\,\,,\,\}_t$, we have $\cz(\{\,\,,\,\}_t)=\vp_{t}^{-1} (\cz(\gS(\g)))$ for $t\ne 0,\infty$ 
and
\[
    \vp_t(H_j)=(H_j)_{d_j,0}+t (H_j)_{d_j-1,1}+ t^2 (H_j)_{d_j-2,2}+\dots
\]
Using the Vandermonde determinant, we deduce from this that all $(H_j)_{i,d_j-i}$ belong to
$\gZ_{\langle\h,\rr\rangle}$ and the algebra generated by them contains $\cz_t$ with
$t\in \bbk\setminus\{0\}$.
\end{proof}
The main difficulty in applying this theorem is that one has to know the generators of the centres
$\cz_0$ and $\cz_\infty$. The problem is that these centres are not always generated by certain
bi-homogeneous components of $H_1,\dots,H_l$. In the subsequent sections, we consider several nice
examples of non-degenerate $2$-splittings of $\g$, describe the corresponding Poisson-commutative
subalgebras of $\gS(\g)$ and point out some applications to integrable systems.

\section{The Poisson-commutative subalgebra $\gZ_{\langle\be,\ut_-\rangle}$}
\label{sect:b-n}

Let $\g=\ut\oplus\te\oplus\ut_-$ be a fixed triangular decomposition and $\be=\ut\oplus\te$.
The corresponding subgroups of $G$ are $U,T,U_-$, and $B$.
In this section, we take $(\h,\rr)=(\be,\ut_-)$. Then
$\g_{(0)}=\be\ltimes \ut^{\sf ab}_-$ and $\g_{(\infty)}=\ut_-\ltimes \be^{\sf ab}$.
Since $G/U_-$ is quasi-affine, $\ind\g_{(\infty)}=\ind\g$, see Theorem~\ref{thm:c=0}.
By a direct computation, one also obtains  $\ind\g_{(0)}=\ind\g$, cf. Remark~\ref{rem:parab-contr}.
Hence $\g=\be\oplus\ut_-$ is a non-degenerate $2$-splitting.

In order to get explicit generators of the algebra $\gZ_{\langle\be,\ut_-\rangle}$, we first have to
describe the algebras $\cz_0$ and $\cz_\infty$. Recall that $\gS(\g)^\g=\bbk[H_1,\dots,H_l]$ and
$H_i^\bullet$ is the bi-homogeneous component of $H$ of highest degree w.r.t. $\ut_-$. The following
is Theorem~3.3 in~\cite{alafe}.

\begin{prop}   \label{prop:gen-Z0}
For $\g_{(0)}=\be\ltimes\ut^{\sf ab}_-$, the Poisson centre $\cz_0=\cz\gS(\g_{(0)})$  is freely generated
by $H_1^\bullet,\dots, H_l^\bullet$. The bi-degree of $H_j^\bullet$ is $(1,d_j-1)$.
\end{prop}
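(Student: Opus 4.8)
The plan is to deduce the statement from the general machinery of $\h$-good generating systems, i.e.\ from Theorems~\ref{thm:kot14} and~\ref{thm:h-ggs+codim2}, after first pinning down the $\ut_-$-degrees of the basic invariants.

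First I would compute $\deg_{\ut_-}\! H_j$ for an arbitrary set of homogeneous basic invariants. On the one hand, a bi-homogeneous component of $H_j$ of bi-degree $(0,d_j)$ would be a nonzero $\te$-invariant of positive degree lying in $\gS(\ut_-)$; but $\gS^{\ge 1}(\ut_-)$ is spanned by monomials $f_{\beta_1}\cdots f_{\beta_k}$ of $\te$-weight $-(\beta_1+\dots+\beta_k)\ne 0$, so no such component exists and $\deg_{\ut_-}\! H_j\le d_j-1$. On the other hand, $\sum_{j=1}^l(d_j-1)=\bb(\g)-\rk\g=\frac{1}{2}(\dim\g-\rk\g)=\dim\ut_-$, while Theorem~\ref{thm:kot14}(i) gives $\sum_j\deg_{\ut_-}\!H_j\ge\dim\ut_-$. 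Combining the two, $\sum_j\deg_{\ut_-}\!H_j=\dim\ut_-$ and $\deg_{\ut_-}\!H_j=d_j-1$ for every $j$. Hence, by Theorem~\ref{thm:kot14}(ii), $H_1,\dots,H_l$ is automatically a $\be$-{\sf g.g.s.}, so $H_1^\bullet,\dots,H_l^\bullet$ are algebraically independent, and the bi-degree of $H_j^\bullet$ is $(d_j-(d_j-1),\,d_j-1)=(1,d_j-1)$, which already gives the last assertion of the proposition.

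Next I would invoke Theorem~\ref{thm:h-ggs+codim2}. Its two hypotheses are $\ind\g_{(0)}=\ind\g$, which has already been recorded at the start of this section for $\g_{(0)}=\be\ltimes\ut_-^{\sf ab}$ (the direct computation, cf.\ Remark~\ref{rem:parab-contr}), and the {\sl codim}--$2$ property of $\g_{(0)}$. Granting both, Theorem~\ref{thm:h-ggs+codim2} immediately yields that $H_1^\bullet,\dots,H_l^\bullet$ freely generate $\gS(\g_{(0)})^{\g_{(0)}}=\cz_0$, as claimed.

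The main obstacle is the {\sl codim}--$2$ property of the contraction $\g_{(0)}=\be\ltimes\ut_-^{\sf ab}$, i.e.\ controlling the set of $\g_{(0)}$-singular points in $\g^*$. Here one uses the explicit description of the coadjoint action of $\be\ltimes\ut_-^{\sf ab}$: by the Ra\"\i s-type analysis recalled in Remark~\ref{rem:parab-contr} the relevant generic stabiliser is $\g^e$ for a regular nilpotent $e$, and the structure of the coadjoint $\g_{(0)}$-orbits (together with the validity of the Kostant regularity criterion for $t=0$ noted after~\eqref{span-dif}, established in~\cite[Sect.~4]{contr}) shows that the non-regular locus has codimension at least $2$. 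With this input the proof is complete.
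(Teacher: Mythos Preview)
Your first two paragraphs are fine: the computation that $\deg_{\ut_-}H_j=d_j-1$ via the $\te$-weight argument and Theorem~\ref{thm:kot14}(i), and the conclusion that any homogeneous generating system is a $\be$-{\sf g.g.s.}, are correct and recover the bi-degree assertion.

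The gap is in the last paragraph. You want to apply Theorem~\ref{thm:h-ggs+codim2}, and for that you need the {\sl codim}--$2$ property of $\g_{(0)}=\be\ltimes\ut_-^{\sf ab}$. But this property \emph{fails} whenever $\g$ is simple and not of type~$\mathsf A$. Indeed, Lemma~\ref{lm-sing-0}(i) of this very paper (equivalently~\cite[Theorem~5.5]{contr}) shows that the fundamental semi-invariant of $\g_{(0)}$ is $p=\prod_i f_i^{a_i-1}$, so $\g^*_{(0),\sf sing}$ contains the divisors $D_i=\{\xi\mid \xi(f_i)=0\}$ for every $i$ with $a_i>1$; such $i$ exist in every type except~$\mathsf A$. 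The remark you cite after~\eqref{span-dif} concerns the Kostant regularity criterion for $\cz_0$ (i.e.\ that $\textsl{d}_\xi\cz_0=\ker\pi_0(\xi)$ on $\g^*_{(0),\sf reg}$), which is a different statement and does not control $\codim\g^*_{(0),\sf sing}$.

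So Theorem~\ref{thm:h-ggs+codim2} only gives the result for type~$\mathsf A$. The paper does not supply an independent argument here; it simply quotes~\cite[Theorem~3.3]{alafe}, where the polynomiality of $\cz_0$ for $\be\ltimes\ut_-^{\sf ab}$ is established directly for all types, circumventing the missing {\sl codim}--$2$ hypothesis. If you want a self-contained route, you would need an extra ingredient---for instance, exploiting the semi-invariant $p$ and arguing with $\gS(\g_{(0)})[p^{-1}]$, or reproducing the direct analysis of~\cite{alafe}---rather than Theorem~\ref{thm:h-ggs+codim2} alone.
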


In our present terminology, one can say that {\bf any} homogeneous generating system
$H_1,\dots,H_l\in\gS(\g)^\g$ is a $\be$-{\sf g.g.s.}

\begin{prop}   \label{prop:gen-Zinf}
For $\g_{(\infty)}=\ut_-\ltimes\be^{\sf ab}$, one has $\cz_\infty=\gS(\te)$, where
$\te\subset\be=\be^{\sf ab}\subset \g_{(\infty)}$.
\end{prop}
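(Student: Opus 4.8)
The plan is to prove the two inclusions $\gS(\te)\subseteq\cz_\infty$ and $\cz_\infty\subseteq\gS(\te)$ separately. The first is an immediate bracket computation, while the second is a transcendence‑degree argument that exploits the equality $\ind\g_{(\infty)}=\rk\g$ already established for the non‑degenerate $2$-splitting $\g=\be\oplus\ut_-$.

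\emph{First inclusion.} I would check that $\te$ sits in the centre of the Lie algebra $\g_{(\infty)}=\ut_-\ltimes\be^{\sf ab}$. Using the decomposition of $\{\,\,,\,\}_\infty$ recorded in Section~\ref{sect:3}, for $t\in\te\subset\be=\h$ and an arbitrary $y\in\g$ one gets $\{t,y\}_\infty=[t,y_{\ut_-}]_{\be}$, and this vanishes because $[\te,\ut_-]\subset\ut_-$ has zero $\be$-component. Hence every $t\in\te$ is a Poisson-central element for $\{\,\,,\,\}_\infty$, and by the Leibniz rule the whole subalgebra $\gS(\te)$ lies in $\cz_\infty$. (The same computation in fact shows that $\te$ is the \emph{entire} centre of $\g_{(\infty)}$, but only this inclusion is needed.) In particular $\trdeg\gS(\te)=\dim\te=\rk\g$.

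\emph{Second inclusion.} Here I would pass to rational invariants. Since $G/U_-$ is quasi-affine, Theorem~\ref{thm:c=0} gives $\ind\g_{(\infty)}=\rk\g$, so by the Rosenlicht theorem $\trdeg\,\bbk(\g^*)^{\g_{(\infty)}}=\rk\g$. On the other hand $\cz_\infty\subseteq\bbk(\g^*)^{\g_{(\infty)}}$, hence the field of fractions $\mathrm{Frac}\,\gS(\te)$ is contained in $\bbk(\g^*)^{\g_{(\infty)}}$ as well, and it too has transcendence degree $\rk\g$. Thus $\bbk(\g^*)^{\g_{(\infty)}}$ is algebraic over $\mathrm{Frac}\,\gS(\te)$. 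Completing a basis of $\te$ to a basis $e_1,\dots,e_n$ of $\g_{(\infty)}$ (with $n=\dim\g$), we see that $\bbk(\g^*)=\bbk(e_1,\dots,e_n)$ is a purely transcendental extension of $\mathrm{Frac}\,\gS(\te)=\bbk(e_1,\dots,e_{\rk\g})$, so the latter is algebraically closed in $\bbk(\g^*)$. Therefore $\bbk(\g^*)^{\g_{(\infty)}}=\mathrm{Frac}\,\gS(\te)$, and so
\[
   \cz_\infty\subseteq\gS(\g_{(\infty)})\cap\mathrm{Frac}\,\gS(\te)=\gS(\te),
\]
the last equality because a polynomial in $e_1,\dots,e_n$ that lies in $\bbk(e_1,\dots,e_{\rk\g})$ cannot involve any $e_i$ with $i>\rk\g$. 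Combined with the first inclusion, this gives $\cz_\infty=\gS(\te)$.

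I do not anticipate a real obstacle: the only non‑formal input is the equality $\ind\g_{(\infty)}=\rk\g$, which is Theorem~\ref{thm:c=0} together with the quasi-affineness of $G/U_-$ and is already available; everything else is the short computation of the second paragraph and two standard facts from field theory in the third.
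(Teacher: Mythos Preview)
Your proof is correct and follows essentially the same strategy as the paper: show $\te$ is central in $\g_{(\infty)}$ (so $\gS(\te)\subset\cz_\infty$), use $\ind\g_{(\infty)}=\rk\g$ to force the extension $\gS(\te)\subset\cz_\infty$ to be algebraic, and conclude by algebraic closedness of $\gS(\te)$. The paper phrases the last step directly in the polynomial ring (``$\gS(\te)$ is algebraically closed in $\gS(\g_{(\infty)})$'') while you route through $\mathrm{Frac}\,\gS(\te)$ being algebraically closed in $\bbk(\g^*)$; this is a cosmetic difference only.
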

\begin{proof}
Since $\be$ is abelian in $\g_{(\infty)}$ and $\be\simeq \g/\ut_-$ as an $\ut_-$-module, we have
$\te\subset \cz_\infty$.  Since
$\ind \g_{(\infty)}=l=\dim\te$, this means that $\gS(\te)\subset\cz_\infty$ is an algebraic extension.
Because $\gS(\te)$ is algebraically closed in $\gS(\g_{(\infty)})$, we conclude that
$\gS(\te)=\cz_\infty$.
\end{proof}

\begin{thm}     \label{thm:b-n_polynomial}
The algebra $\gZ_{\langle\be,\ut_-\rangle}$ is polynomial. It is freely generated by the bi-homogeneous
components $\{(H_j)_{i,d_j-i} \mid 1\le j\le l,\ 1\le i \le d_j-1\}$ and a basis for $\te$.
\end{thm}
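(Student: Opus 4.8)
The goal is to show that $\gZ_{\langle\be,\ut_-\rangle}$ is freely generated by the set
$\cS=\{(H_j)_{i,d_j-i}\mid 1\le j\le l,\ 1\le i\le d_j-1\}\cup\{\text{basis of }\te\}$.
First I would establish that $\cS$ \emph{generates} $\gZ_{\langle\be,\ut_-\rangle}$. By Theorem~\ref{thm:main3-1}, $\gZ_{\langle\be,\ut_-\rangle}$ is generated by $\cz_0$, $\cz_\infty$, and all bi-homogeneous components $(H_j)_{i,d_j-i}$ with $0\le i\le d_j$. By Proposition~\ref{prop:gen-Zinf}, $\cz_\infty=\gS(\te)$, which is generated by a basis of $\te$; and by Proposition~\ref{prop:gen-Z0}, $\cz_0$ is generated by the $H_j^\bullet=(H_j)_{1,d_j-1}$, which are already in $\cS$. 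So I only need to dispose of the ``boundary'' components $(H_j)_{0,d_j}$ and $(H_j)_{d_j,0}$. The component $(H_j)_{d_j,0}$ lies in $\gS(\be)$; it is the restriction of $H_j$ to $\ut_-^\perp=\be^*$, so it is the image of a $\g$-invariant and can be shown to lie in the subalgebra generated by $\gS(\te)$ together with lower boundary data — in fact $(H_j)_{d_j,0}$ is $\ut$-invariant for the adjoint action and one checks, using that $\gS(\be)^{\ut}$-type restriction arguments or the Chevalley restriction $\gS(\be)\to\gS(\te)$, that $(H_j)_{d_j,0}$ is a polynomial in the basis of $\te$. Symmetrically, $(H_j)_{0,d_j}\in\gS(\ut_-)$ is the restriction of $H_j$ to $\be^\perp=\ut_-^\perp{}^\perp$; since $\ut_-$ consists of nilpotent elements and $H_j$ vanishes on the nilpotent cone's relevant part, $(H_j)_{0,d_j}=0$ for $j$ with $d_j\ge 2$, and for a degree-one invariant there are none in semisimple $\g$. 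Hence $\cS$ generates.

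\textbf{Algebraic independence.} Now I must show the elements of $\cS$ are algebraically independent. The cardinality check: $\sum_{j=1}^l(d_j-1)+l=\sum_j d_j=\bb(\g)=\trdeg\gZ_{\langle\be,\ut_-\rangle}$ by Theorem~\ref{thm:dim-Z}. So it suffices to prove that the $\bb(\g)$ elements of $\cS$ are algebraically independent — then they are a transcendence basis and, being a generating set of that cardinality in a domain, they generate a polynomial ring. The standard way to verify algebraic independence is to exhibit a point $\xi\in\g^*$ where the differentials $\textsl{d}_\xi f$, $f\in\cS$, are linearly independent. I would take $\xi\in\Omega$ (the set from Theorem~\ref{thm:dim-Z}), where $\dim\textsl{d}_\xi\gZ_{\langle\be,\ut_-\rangle}=\bb(\g)$; since $\cS$ generates the algebra, its differentials span $\textsl{d}_\xi\gZ_{\langle\be,\ut_-\rangle}$, hence there are $\bb(\g)$ of them spanning a $\bb(\g)$-dimensional space, so they are linearly independent, giving algebraic independence.

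\textbf{Main obstacle and its resolution.} The delicate point is the first step: controlling the two boundary components $(H_j)_{d_j,0}$ and $(H_j)_{0,d_j}$, i.e.\ showing they contribute nothing beyond $\gS(\te)$. For $(H_j)_{0,d_j}\in\gS^{d_j}(\ut_-)$: the bi-degree $(0,d_j)$ component is the leading term of $H_j$ restricted to the purely-$\ut_-$ directions, which is $H_j|_{\be^\perp}$ where $\be^\perp\cong\ut\subset\g$ under the Killing form; since every element of $\ut$ is nilpotent and basic invariants $H_j$ of degree $\ge 2$ vanish on the nilpotent variety to order... actually more carefully, $H_j|_{\ut}\equiv 0$ because $\ut$ lies in the nilpotent cone $\N$ and $H_j(\N)=0$ for $j$ with $d_j\ge 2$; the linear invariants don't exist for semisimple $\g$, so $(H_j)_{0,d_j}=0$ always. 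For $(H_j)_{d_j,0}\in\gS^{d_j}(\be)$: this is $H_j|_{\ut_-^\perp}$ with $\ut_-^\perp\cong\be_-$; by the Chevalley-type isomorphism $\gS(\be_-)^{U_-}\cong\gS(\te)$ obtained from restriction along $\te\hookrightarrow\be_-$ and projection $\be_-\twoheadrightarrow\te$, together with the fact that $H_j|_{\be_-}$ is $U_-$-invariant and its restriction to $\te$ is the $W$-invariant polynomial $\bar H_j$ of degree $d_j$, one gets $(H_j)_{d_j,0}$ equals (the image of) $\bar H_j$ in $\gS(\te)$ plus terms in the augmentation ideal of $\gS(\ut_-)\gS(\be)$ — but since $(H_j)_{d_j,0}\in\gS(\be)$ has top $\be_-$-degree, it literally \emph{is} a polynomial in the basis of $\te$. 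I expect this Chevalley-restriction argument for $\be_-$ to require the most care; everything else is bookkeeping plus the dimension count from Theorem~\ref{thm:dim-Z}.
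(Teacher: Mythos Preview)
Your proposal is correct and follows essentially the same route as the paper: use Theorem~\ref{thm:main3-1} together with Propositions~\ref{prop:gen-Z0} and~\ref{prop:gen-Zinf} to obtain a generating set, discard the boundary components via $(H_j)_{0,d_j}=H_j|_{\be^\perp}\simeq H_j|_{\ut}=0$ and $(H_j)_{d_j,0}\in\gS(\te)$, and then match the count of $\bb(\g)$ remaining generators against $\trdeg\gZ_{\langle\be,\ut_-\rangle}=\bb(\g)$ from Theorem~\ref{thm:dim-Z}. The step you flagged as delicate is in fact immediate and is exactly what the paper does: $(H_j)_{d_j,0}=H_j|_{\be^*}\simeq H_j|_{\be_-}$ is $G$-invariant hence $U_-$-invariant, so it depends only on the $\te^*$-component and therefore lies in $\gS(\te)$.
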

\begin{proof}
By Proposition~\ref{prop:gen-Z0}, the generators of $\cz_0$ are certain bi-homogeneous
components of $H_1,\dots,H_l$.
Therefore, combining Theorem~\ref{thm:main3-1}, Proposition~\ref{prop:gen-Z0}, and
Proposition~\ref{prop:gen-Zinf}, we obtain that $\gZ_{\langle\be,\ut_-\rangle}$ is generated by
the bi-homogeneous components of all $H_j$'s and $\gS(\te)$.

The bi-homogeneous component $(H_j)_{d_j,0}$ is the restriction of $H_j$ to
$(\ut_-)^\perp=\be^*\subset\g^*$. (Upon the identification of $\g$ and $\g^*$, this becomes the restriction
to $\be_-=\te\oplus\ut_-$.) As $H_j$ is $G$-invariant, such a restriction depends only on
$\te^*\subset \be^*$; i.e., it is a $W$-invariant element of $\gS(\te)$. Since we already have the whole
of $\gS(\te)$, the functions $\{(H_j)_{d_j,0} \mid 1\le j\le l\}$ are not needed for a minimal generating system.
On the other hand, $(H_j)_{0,d_j}$ is  the restriction of $H_j$ to $\be^\perp=\ut^*_-\simeq \ut$. Therefore,
$(H_j)_{0,d_j}= 0$ for all $j$. Thus, $\gZ_{\langle\be,\ut_-\rangle}$ is generated by the functions pointed out in the statement.
The total number of these generators is $l+\sum_{j=1}^l(d_j-1)=\bb(\g)$. Because
$\trdeg \gZ_{\langle\be,\ut_-\rangle}=\bb(\g)$ (Theorem~\ref{thm:dim-Z}), all these generators are nonzero and algebraically independent.
\end{proof}

\noindent
Thus, we have constructed a polynomial Poisson-commutative subalgebra
$\gZ_{\langle\be,\ut_-\rangle}\subset \gS(\g)$ of maximal transcendence degree.

\begin{thm}     \label{thm:b-n_complete}
The  Poisson-commutative algebra $\gZ_{\langle\be,\ut_-\rangle}$ is complete on every \emph{regular}
coadjoint orbit of $G$.
\end{thm}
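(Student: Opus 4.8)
The plan is to reduce, via Lemma~\ref{obvious}, to exhibiting on each regular orbit a point where the differential of $\gZ_{\langle\be,\ut_-\rangle}$ is as large as possible, and then to produce such a point; the principal nilpotent orbit is the one case that resists a soft argument.

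\smallskip
\textbf{Reduction.} Put $\Omega_{\sf good}=\{\eta\in\g^*\mid\dim\textsl{d}_\eta\gZ_{\langle\be,\ut_-\rangle}=\bb(\g)\}$; this is open, and since every point of a regular coadjoint orbit is $\g$-regular, Lemma~\ref{obvious} reduces the theorem to the claim that $\Omega_{\sf good}$ meets every regular orbit. The proof of Theorem~\ref{thm:dim-Z} in fact shows that $\Omega_{\sf good}$ contains every $\eta$ which is $\g_{(t)}$-regular for all $t\in\BP$; and, as the brackets $\{\,\,,\,\}_t$ with $t\in\bbk^\times$ are carried into one another by the linear maps $\vp_t$, this condition on $\eta=\eta_\h+\eta_\rr\in\h^*\oplus\rr^*$ reads: $\eta_\h+s\,\eta_\rr\in\g^*_{\sf reg}$ for all $s\in\BP$, with the conventions that $s=0$ means $\eta_\h$ and $s=\infty$ means $\eta_\rr$. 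Propositions~\ref{prop:gen-Z0} and~\ref{prop:gen-Zinf}, together with the Remark after~\eqref{span-dif}, make the end-point cases $t=\infty$ and $t=0$ explicit (namely: $\eta_\h$, resp. $\eta_\rr$, regular).

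\smallskip
\textbf{Orbits with non-central semisimple part.} Let $\xi$ be regular with Jordan decomposition $\xi=\xi_s+\xi_n$, $\xi_s\notin\z(\g)$. Conjugate so that $\xi_s\in\te$; then $\el:=\g^{\xi_s}\subsetneq\g$ is a standard Levi subalgebra and $\xi_n$ is a regular nilpotent of $\el$. Since the regular nilpotents of $\el$ form a single $Z_G(\xi_s)^{\circ}$-orbit, one may move $\xi_n$, without leaving $G{\cdot}\xi$, to a principal nilpotent $n=\sum_{\alpha\in\Pi_\el}e_\alpha\in\ut$ of $\el$, so that $\eta:=\xi_s+n$ has $\eta_\rr=n$, $\eta_\h=\xi_s$, and $\eta_\h+s\,\eta_\rr=\xi_s+sn$ is regular for all $s\in\bbk^\times$. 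A further generic conjugation then also makes $\eta_\h$ and $\eta_\rr$ regular and keeps the whole line $s\mapsto\eta_\h+s\,\eta_\rr$ in $\g^*_{\sf reg}$ --- the conjugations that fail form a proper, positive-codimension subset, because $\g^*_{\sf sing}$ has codimension $3$ and one is intersecting it with a one-parameter family of lines. Hence $\Omega_{\sf good}\cap G{\cdot}\xi\neq\varnothing$, which covers every regular orbit except the principal nilpotent orbit $G{\cdot}e$.

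\smallskip
\textbf{The principal nilpotent orbit.} This is the main obstacle. The soft argument fails here: $e$ itself is a bad point (a direct computation with the generators of Theorem~\ref{thm:b-n_polynomial} gives $\dim\textsl{d}_e\gZ_{\langle\be,\ut_-\rangle}<\bb(\g)$ already for $\g=\mathfrak{sl}_3$ --- the $\h$-component of $e$ vanishes), and the complement of $\Omega_{\sf good}$ need not be thin enough near $G{\cdot}e$ to make a generic conjugate work. The plan is instead to exploit the principal $\mathfrak{sl}_2$-triple $(e,h,f)$ (with $e\in\ut$, $f\in\ut_-$, $h\in\te$), the principal grading on $\g$, and the structure of the abelian centraliser $\g^e$, in order to construct an explicit $n\in G{\cdot}e$ whose components $n_\h\in\h^*$ and $n_\rr\in\rr^*$ are \emph{simultaneously} regular nilpotents and for which $n_\h+s\,n_\rr$ is a regular nilpotent for every $s\in\BP$ --- for $\g=\mathfrak{sl}_N$ one may take for $n$ a tridiagonal matrix with nonzero sub- and super-diagonal and suitable diagonal entries making $n$ nilpotent. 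Any such $n$ lies in $\Omega_{\sf good}$ by the Reduction step, and Lemma~\ref{obvious} concludes. (For $\rk\g=1$ the last step is automatic, since $\dim(G{\cdot}e)=\dim\g-1>\dim(\g^*\setminus\Omega_{\sf good})$.) I expect this construction on $G{\cdot}e$ --- in particular, verifying that a regular nilpotent can be chosen with regular $\h$- and $\rr$-parts simultaneously --- to be the heart of the proof; the remaining steps are bookkeeping with the $\vp_t$-equivariance and the Jordan decomposition.
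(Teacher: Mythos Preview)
Your reduction via Lemma~\ref{obvious} is right, and your tridiagonal guess for the principal nilpotent orbit is essentially the paper's explicit point $y=e+h-f$ (with $\{e,h,f\}$ a principal $\mathfrak{sl}_2$-triple, $e\in\ut$, $h\in\te$, $f\in\ut_-$). But two steps are genuinely broken, and the overall strategy is backwards.

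\textbf{The endpoint characterisation is wrong.} You assert that $\eta$ is $\g_{(\infty)}$-regular iff $\eta_\h\in\g^*_{\sf reg}$ (and dually for $t=0$). Take $\eta=f$, the principal nilpotent in $\ut_-\subset\be_-\simeq\be^*$: then $\eta_\h=f$ is $\g$-regular, yet $(\eta,\alpha)=0$ for every $\alpha\in\Delta^+$ since the $\te$-component of $f$ vanishes, so $\eta\in\g^*_{\infty,\sf sing}$ by Lemma~\ref{lm-sing-inf}. The actual descriptions of $\g^*_{\infty,\sf sing}$ and of the divisors in $\g^*_{(0),\sf sing}$ are Lemmas~\ref{lm-sing-inf} and~\ref{lm-sing-0}; neither is equivalent to $\g$-regularity of a projection. \textbf{The ``generic conjugation'' step is not a proof.} For your $\eta=\xi_s+n$ with $\xi_s$ non-central, neither $\eta_\h=\xi_s$ nor $\eta_\rr=n$ is $\g$-regular. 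Since the splitting $\h^*\oplus\rr^*$ is not $G$-equivariant, you have no control over $(g\eta)_\h$ and $(g\eta)_\rr$, and the codimension-$3$ remark about $\g^*_{\sf sing}$ does not show that the orbit $G\xi$ escapes the closed set $Y\cup\g^*_{(0),\sf sing}\cup\g^*_{\infty,\sf sing}$.

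The paper reverses the logic and thereby avoids both problems. It handles \emph{only} the principal nilpotent orbit, checking by direct stabiliser computations (not via your claimed equivalence) that $y=e+h-f$ lies in $\g^*_{(t),\sf reg}$ for every $t\in\BP$, hence in $\Omega$. Then, for an arbitrary regular $x$, the Borho--Kraft theory of associated cones gives $Ge\subset\overline{\bbk^\times{\cdot}Gx}$. Because $\gZ_{\langle\be,\ut_-\rangle}$ is generated by homogeneous elements, $\xi\mapsto\dim\textsl{d}_\xi\gZ_{\langle\be,\ut_-\rangle}$ is lower semicontinuous and $\bbk^\times$-invariant, so
\[
\max_{x'\in Gx}\dim\textsl{d}_{x'}\gZ_{\langle\be,\ut_-\rangle}\ \ge\ \max_{e'\in Ge}\dim\textsl{d}_{e'}\gZ_{\langle\be,\ut_-\rangle}=\bb(\g),
\]
and Lemma~\ref{obvious} finishes. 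The case you flagged as the hard leftover is the only one that needs an explicit argument, and it immediately implies all the others.
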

\begin{proof}
Given an orbit $G{\cdot}x\subset \g^*_{\sf reg}$,  it suffices to find 
$y\in G{\cdot}x$ such that $\dim\textsl{d}_y \gZ_{\langle\be,\ut_-\rangle}= \bb(\g)$.
Consider first  the regular nilpotent orbit $Ge'$. Let $\{e,h,f\}$ be a principal $\tri$-triple in
$\g$ such that $e\in\ut $, $h\in\te$, $f\in\ut _-$.  Then $y:=e+h-f\in G{\cdot}e'$.
 Here $e\in\gt u_-^*$ and $(h-f)\in\gt b^*$.

We claim that $y\in\g_{(t), {\sf reg}}^*$ for every $t\in\BP$.
Indeed, if $t\ne 0,\infty$, then $te+(h-f)\in\g^*_{\sf reg}$, cf. \eqref{eq-cdt}.
Further, $\g_{(0)}^e=\gt b^e=\g^e$ is commutative and $\dim\g^e=l$.
Therefore also $\g_{(0)}^y=\g^e$ and $y\in\g^*_{(0),{\sf reg}}$.
Finally, $\ad\!^*(\ut _-)(h-f)=\Ann\!(\te\oplus\ut _-)$.
Hence $\dim\g_{(\infty)}^y=\dim\g_{(\infty)}^{h-f}=l$ and $y\in\g^*_{\infty,\sf reg}$.
The claim is settled.

Now we know that $y\in \Omega$, where $\Omega$ is the subset of Theorem~\ref{thm:dim-Z}(1).
By Theorem~\ref{thm:dim-Z}(2), $\dim\textsl{d}_y \gZ_{\langle\be,\ut_-\rangle}= \bb(\g)$.  This means
that $\gZ_{\langle\be,\ut_-\rangle}$ is complete on the regular nilpotent orbit $G{\cdot}e=G{\cdot}e'$, see Lemma~\ref{obvious}.

In general, using the theory of associated cones of Borho and Kraft~\cite{bokr}, one sees that
$Ge\subset \overline{\bbk^\times {\cdot}Gx}$ for any $x\in\g^*_{\sf reg}$.
Since the subalgebra $\gZ_{\langle\be,\ut_-\rangle}$ is homogeneous, we have
\[
\bb(\g)\ge \max_{x'\in Gx}\dim \textsl{d}_{x'} \gZ_{\langle\be,\ut_-\rangle} \ge \max_{e'\in Ge}\dim\textsl{d}_{e'} \gZ_{\langle\be,\ut_-\rangle}=\bb(\g).
\]
The result follows in view of Lemma~\ref{obvious}.
\end{proof}

\begin{rmk}            \label{rem:setting-p}
Our $(\be,\ut_-)$-results can be put in a more general setting in the following way. Let $\p\supset \be$
be a standard parabolic subalgebra with Levi decomposition $\p=\el\oplus\p^{\sf nil}$. This yields the
decomposition $\g=\p^{\sf nil}\oplus \el\oplus\p^{\sf nil}_-$, where $\p_-=\el\oplus\p^{\sf nil}_-$ is the
opposite parabolic. Consider the $2$-splitting $\g=\p\oplus\p^{\sf nil}_-$. Here $\p$ is a spherical
subalgebra, while $\p^{\sf nil}_-$ is spherical if and only if $\p=\be$. Actually,
$c_G(G/P_-^{\sf nil})=\dim \ut(\el)$, where $\ut(\el)=\ut\cap\el$. Then
\[
 \ind\g_{(\infty)}=\ind (\p^{\sf nil}_-\ltimes\p^{\sf ab})=\dim\el ,
\]
cf. Theorem~\ref{thm:c=0}. Moreover, one proves here that $\gZ_\infty= \gS(\el)$, cf.
Proposition~\ref{prop:gen-Zinf}.
Therefore, if $\p\ne \be$, then $\BP_{\sf sing}=\{\infty\}$ and the
{\sf PC}-subalgebra
$\gZ_{\langle\p,\p_-^{\sf nil}\rangle}$ is generated by all $\cz_t$ with $t\ne\infty$. In this case,
$\gZ_{\langle\p,\p_-^{\sf nil}\rangle}\subset \gS(\g)^\el$ and one can prove that
$\trdeg \gZ_{\langle\p,\p_-^{\sf nil}\rangle}=\bb(\g)-\dim \ut(\el)$. To describe explicitly
$\gZ_{\langle\p,\p_-^{\sf nil}\rangle}$, one has to know the structure and generators of
$\cz_0=\cz\gS(\p\ltimes (\p_-^{\sf nil})^{\sf ab})$. However, it is not known whether $\cz_0$ is always polynomial, and generators are only known in some special cases.
For instance, this is so if $\p$ is a minimal parabolic, i.e., $[\el,\el]\simeq\tri$ (see~\cite[Section\,6]{alafe2}). We hope to consider this case in detail in a forthcoming publication.
\end{rmk}

\section{The maximality of  $\gZ_{\langle\be,\ut_-\rangle}$}
\label{sect:5}

\noindent
Here we prove that $\gZ_{\langle\be,\ut_-\rangle}$ is a {\bf maximal} Poisson-commutative
subalgebra of $\gS(\g)$.

Let $\Delta$ be the set of roots of $(\g,\te)$. Then $\g_\mu$ is the root space for $\mu\in\Delta$.
Let $\Delta^+$ be the set of positive roots corresponding to $\ut$. Choose nonzero vectors $e_\mu\in\g_\mu$ and $f_\mu\in\g_{-\mu}$ for any $\mu\in\Delta^+$.
Let  $\ap_1,\ldots,\ap_l$ be the simple roots and $\delta$ the highest root in $\Delta^+$. Write
$\delta=\sum_{i=1}^l a_i \ap_i$ and set $f_i=f_{\ap_i}$.
Assuming that $\deg H_j\le \deg H_i$ if $j<i$ for the basic invariants in $\gS(\g)^\g$, we have
$H_l^\bullet = e_\delta \prod_{i=1}^l f_i^{a_i}$, see~\cite[Lemma~4.1]{alafe}.

Recall that we have two contractions $\g_{(0)}=\be\ltimes\ut^{\sf ab}_-$ and
$\g_{(\infty)}=\ut_-\ltimes\be^{\sf ab}$. As the first step towards proving the maximality of
$\gZ_{\langle\be,\ut_-\rangle}$, we study the subsets $\g^*_{\infty,\sf sing}$ and $\g^*_{(0),\sf sing}$.

\begin{lm}          \label{lm-sing-inf}
{\sf (i)}   $\g^*_{\infty,\sf sing}=\bigcup_{\ap\in\Delta^+}D(\alpha)$,
where $D(\alpha)=\{\xi\in\g^* \mid (\xi,\alpha)=0\}$
and $(\,\,,\,)$ is the Killing form on $\g^*\simeq \g $.
\\ \indent
{\sf (ii)} For any $\alpha\in\Delta^+$ and a generic $\xi\in D(\alpha)$, we have
$\dim\g _{(\infty)}^\xi=l+2$.
\end{lm}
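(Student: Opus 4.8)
The plan is to compute directly with the Poisson tensor $\pi_\infty(\xi)$ of the contracted algebra $\g_{(\infty)}=\ut_-\ltimes\be^{\sf ab}$. Recall from the explicit formula for $\{\,\,,\,\}_\infty$ that, writing $\xi=\xi_\h+\xi_\rr\in\be^*\oplus\ut_-^*$ (here $\h=\be$, $\rr=\ut_-$), the bracket $\{x,y\}_\infty(\xi)$ only pairs $\ut_-$ with itself via $[x_{\ut_-},y_{\ut_-}]$ and pairs $\ut_-$ with $\be$ via the $\be$-component of a bracket; the $\be$-part is abelian. Hence $\pi_\infty(\xi)$ depends, up to the action of $\vp_t$ and the identification of $\g$ with $\g^*$, on $\xi$ only through its image under the natural maps, and one can identify $\g_{(\infty)}^\xi$ with the simultaneous kernel of a family of linear maps built from $\xi$. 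Concretely, the first step is to record that $\g_{(\infty)}^\xi=\{x\in\g \mid \xi([x,\cdot]_\infty)=0\}$ and to split this condition according to the bigrading $\g=\ut\oplus\te\oplus\ut_-$.

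\textbf{Part (i).} The generic value of $\rk\pi_\infty$ is $\dim\g-l$, i.e.\ $\dim\g^*_{\infty,\sf reg}$-stabilisers have dimension $l$; this is Proposition~\ref{prop:gen-Zinf} together with $\ind\g_{(\infty)}=l$. So $\xi\in\g^*_{\infty,\sf sing}$ iff $\dim\g_{(\infty)}^\xi>l$. The computation of $\g_{(\infty)}^{h-f}$ type in the proof of Theorem~\ref{thm:b-n_complete} already shows how $\ad^*(\ut_-)$ acts; the key point is that for $\xi$ identified (via the Killing form) with an element $z\in\g$, the relevant nondegeneracy is controlled by the pairing of $z$ with the root subspaces $\g_\alpha$, $\alpha\in\Delta^+$. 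I would show: the form $\pi_\infty(\xi)$ restricted to the relevant complementary subspace has a "block-triangular'' shape whose degeneracy locus is exactly $\bigcup_{\alpha\in\Delta^+}\{(\xi,\alpha)=0\}$. One inclusion is the explicit verification that if $(\xi,\alpha)=0$ then a certain extra vector (roughly, in the span of $e_\alpha,f_\alpha$-directions) lies in the kernel; the reverse inclusion is dimension count: off $\bigcup_\alpha D(\alpha)$ the form attains full rank $\dim\g-l$, which can be checked by exhibiting a single regular $\xi$ (e.g.\ generic semisimple) and noting that the complement of $\bigcup D(\alpha)$ is irreducible, so the rank is constant there.

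\textbf{Part (ii).} Fix $\alpha\in\Delta^+$ and take $\xi\in D(\alpha)$ generic. The claim $\dim\g_{(\infty)}^\xi=l+2$ should follow from a refined version of the same block computation: passing through the wall $D(\alpha)$ transversally, the kernel of $\pi_\infty(\xi)$ grows by exactly $2$ — consistent with symplectic leaves having even codimension, and matching the fact that a generic point of $D(\alpha)$ meets only the single wall $D(\alpha)$ (distinct $D(\alpha),D(\beta)$ intersect in codimension $2$, so genericity in $D(\alpha)$ avoids all other walls). So the plan is: (a) reduce, using $\vp_t$-equivariance and the identification $\g\simeq\g^*$, to $\xi$ lying in a convenient slice of $D(\alpha)$, say $\xi$ a generic semisimple element of $\ker\alpha\subset\te$ perturbed into $\be^*$-directions; (b) decompose the defining equations of $\g_{(\infty)}^\xi$ by root spaces, observing that for $\mu\ne\pm\alpha$ the equations are nondegenerate and pin down the $\g_\mu$-component, while the $\g_{\pm\alpha}\oplus(\text{one }\te\text{-line})$ block drops rank by exactly $2$; (c) conclude $\dim\g_{(\infty)}^\xi=l+2$. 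The main obstacle I anticipate is making step (b) clean — i.e.\ showing the $\g_{\pm\alpha}$-block contributes exactly a $2$-dimensional kernel and not more, which requires controlling the "cross terms'' coming from the non-abelian part $[\ut_-,\ut_-]$; here one uses that $\xi$ is generic in $D(\alpha)$ so that all such cross terms that could be made nonzero are nonzero, leaving precisely the expected $2$-dimensional degeneracy. A convenient way to organise this is to note $\g_{(\infty)}^\xi\cong\ut_-^{\,\bar\xi}\ltimes(\text{something})$ via the Raïs/semidirect-product description and to invoke the structure of $\ut_-$-stabilisers in the coisotropy representation, but a direct matrix computation in a Chevalley basis should also work.
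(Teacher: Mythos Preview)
Your overall strategy---analyse $\pi_\infty(\xi)$ directly via its restriction to root spaces---is the same as the paper's, but two key simplifications are missing and one step in your argument is actually wrong.

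\textbf{The gap in part (i).} Your reverse inclusion (``off $\bigcup_\alpha D(\alpha)$ the form has full rank'') is argued by exhibiting a single regular point and invoking irreducibility of the complement. This does not work: the rank of a family of bilinear forms is only lower-semicontinuous, not locally constant on irreducible sets, so one regular point does not force regularity everywhere on the complement. What the paper does instead is observe that the matrix $C$ of $\pi_\infty(\xi)|_{\ut_-\times\ut}$ is \emph{upper triangular} with respect to any total refinement of the root order, because $[e_\alpha,f_\beta]_{(\infty)}\ne 0$ forces $\alpha\curge\beta$ (the bracket in $\g_{(\infty)}$ is the $\be$-component of the ordinary bracket). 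The diagonal entries are $\xi([f_\alpha,e_\alpha])$, nonzero multiples of $(\xi,\alpha)$. Hence whenever $\xi\notin\bigcup D(\alpha)$ the matrix $C$ is invertible, $\rk\pi_\infty(\xi)\ge 2\rk C=\dim\g-l$, and $\xi$ is regular. This is the ``block-triangular shape'' you allude to but never make precise; it is the heart of the proof.

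\textbf{The missing simplifications for part (ii).} You anticipate the cross terms from $[\ut_-,\ut_-]$ as the main obstacle and propose a delicate root-space computation at a generic $\xi\in D(\alpha)$. The paper bypasses all of this with two remarks. First, $\te$ is the \emph{centre} of $\g_{(\infty)}$, so $\te\subset\g_{(\infty)}^\xi$ for every $\xi$; this already accounts for $l$ kernel dimensions and makes your ``one $\te$-line'' block picture misleading. Second, for generic $\xi\in D(\alpha)$ the triangular matrix $C$ has exactly one vanishing diagonal entry, so $\rk C=\dim\ut-1$, and any nonzero $e\in\ut$ in the kernel of $C$ automatically lies in $\g_{(\infty)}^\xi$ (since $\be$ is abelian in $\g_{(\infty)}$). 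Thus $\dim\g_{(\infty)}^\xi\ge l+1$. On the other hand $\rk\pi_\infty(\xi)\ge 2\rk C=\dim\g-l-2$ gives $\dim\g_{(\infty)}^\xi\le l+2$, and parity of the rank forces equality. No control of the $[\ut_-,\ut_-]$ cross terms is needed: they are absorbed into the single inequality $\rk\pi_\infty(\xi)\ge 2\rk C$. This also delivers the missing inclusion $D(\alpha)\subset\g^*_{\infty,\sf sing}$ for part (i).
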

\begin{proof}
For $\xi\in\g^*$, let $C=C_\infty(\xi)$ be the matrix of
$\pi_{\infty}(\xi)|_{\ut _-\times\ut ^{\sf ab}}$. Since $[\gt b,\gt b]_{(\infty)}=0$, we have
$\rk\pi_\infty(\xi)\ge 2\rk C$. Note that if $[e_\ap,f_\beta]_{(\infty)}\ne 0$, then either $\ap=\beta$ or
$\ap-\beta\in\Delta^+$ and therefore $\ap\curge\beta$ in the usual root order ``$\curge$" on $\Delta^+$.
Refining this partial order to a total order on $\Delta^+$  and choosing bases in $\ut $ and $\ut _-$ accordingly, one can bring $C$ into an upper triangular  form with the entries $\xi([f_\ap,e_\ap])$ on the diagonal.
Now it is clear that $\g^*_{\infty,\sf sing}\subset\bigcup_{\ap\in\Delta^+} D(\ap)$.

Let $\xi\in D(\ap)$ be a generic point. Then $\rk C=\dim\ut -1$ and there is a nonzero  $e\in\ut $ such that $\pi_\infty(\xi)(\ut _-,e)=0$. Hence $e\in\g ^\xi_{(\infty)}$.
Because $\te$ is the centre of $\g _{(\infty)}$, we have $\te\subset \g ^\xi_{(\infty)}$ and
$\bb(\g)-1  \ge \rk\pi_{\infty}(\xi)\ge \bb(\g)-2$. Since $\rk\pi_{\infty}(\xi)$ is an even number, it is equal to $\bb(\g)-2$ and therefore $\dim\g ^\xi_{(\infty)}=l+2$. This settles both claims.
\end{proof}

\begin{lm}                  \label{lm-sing-0}
{\sf (i)} Set $D_i=\{\xi\in\g^* \mid \xi(f_i)=0\}$  for $1\le i\le l$.
Then the union of all divisors in  $\g^*_{(0),\sf sing}$ is equal to\/ $\bigcup_{i:\, a_i>1} D_i$.

{\sf (ii)} For any $D_i\subset \g^*_{(0),\sf sing}$\/ and generic $\xi\in D_i$, we have
$\dim\g _{(0)}^\xi=l+2$.
\end{lm}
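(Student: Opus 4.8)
The plan is to pin down $\g^*_{(0),\sf sing}$ for $\g_{(0)}=\be\ltimes\ut_-^{\sf ab}$ through the block structure of its Poisson tensor and then reduce everything to a statement about nilpotent centralisers in $\be$. Write $\g^*=\be^*\oplus\ut_-^*$ with $\be^*=\ut_-^\perp$, $\ut_-^*=\be^\perp$, and $\xi=(\xi_\be,\xi_\rr)$. Regarded as a skew form on $\g=\be\oplus\ut_-$, $\pi_0(\xi)$ has $(\be,\be)$-block $A(\xi)(x,x')=\xi([x,x'])$ (depending only on $\xi_\be$), $(\be,\ut_-)$-block $B(\xi)(x,y)=\xi([x,y]_{\ut_-})$ (depending only on $\xi_\rr$), and zero $(\ut_-,\ut_-)$-block. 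Identifying $\xi_\rr\leftrightarrow u\in\ut$ by the Killing form (so $\xi(f_i)$ is proportional to the $\g_{\ap_i}$-component of $u$), invariance gives $B(\xi)(x,y)=([u,x],y)$; since $[u,\be]\subseteq\ut$ is Killing-dual to $\ut_-$, the left kernel of $B(\xi)$ is $\be^u=\{x\in\be:[x,u]=0\}$ and $\rk B(\xi)=\dim[u,\be]$. The rank of a skew form of this block shape is $2\rk B+\rk(A|_{\text{left kernel of }B})$, so
\[
\dim\g_{(0)}^\xi=\dim\g-2\dim[u,\be]-\rk(A(\xi)|_{\be^u}).
\]
Since $H_j^\bullet$ has bi-degree $(1,d_j{-}1)$, its differential splits as $\textsl{d}_\xi H_j^\bullet=P_j(\xi_\rr)+Q_j(\xi_\be,\xi_\rr)$ with $P_j(\xi_\rr)=\textsl{d}_{(0,\xi_\rr)}H_j^\bullet\in\be$ and $Q_j\in\ut_-$; linear independence of the $P_j(\xi_\rr)$ forces that of the $\textsl{d}_\xi H_j^\bullet$, so by the Kostant-type criterion for $\g_{(0)}$ (the Remark after \eqref{span-dif}, valid here) I get $\g^*_{(0),\sf sing}\subseteq\be^*\times S$, where $S=\{u\in\ut:\dim\be^u>l\}$ (taking $\xi_\be=0$, where $A=0$, shows $(0,\xi_\rr)$ is $\g_{(0)}$-regular iff $\dim\be^u=l$).

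Next I determine the divisorial part. Since $[u,\be]\cap\ut_{(1)}=[u_1,\te]=\bigoplus_{k:\,u_{\ap_k}\ne0}\bbk\,e_{\ap_k}$ (with $\ut_{(1)}=\bigoplus_k\g_{\ap_k}$ and $u_1$ the height-one part of $u$), each $D_i$ is contained in $S$; conversely $S$ is closed, proper (the principal nilpotent $\sum_k e_{\ap_k}$ is regular), and $B$-stable for the adjoint action on $\ut\cong\ut_-^*$, so each of its codimension-one components is $V(q)$ for a $B$-semi-invariant prime $q\in\bbk[\ut]$. Now $B{\cdot}(\sum_k e_{\ap_k})$ is dense in $\ut$, so $\trdeg\bbk[\ut]^U=l$; the coordinates $x_{\ap_1},\dots,x_{\ap_l}$ are $U$-invariant (any $e_\mu$ acts on $\bbk[\ut]$ raising height, hence kills the height-one coordinates) and algebraically independent, whence $\bbk[\ut]^U=\bbk[x_{\ap_1},\dots,x_{\ap_l}]$ and its prime $B$-semi-invariants are exactly the $x_{\ap_i}$. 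Therefore the codimension-one components of $S$ are the hyperplanes $\{u_{\ap_i}=0\}$, and the divisorial components of $\g^*_{(0),\sf sing}$ lie among $D_1,\dots,D_l$. For generic $\xi\in D_i$ one checks (uniformly in $i$, by exhibiting a point) that $\dim[u,\be]=|\Delta^+|-1$, so $\dim\be^u=l+1$ and $\dim\g_{(0)}^\xi=l+2-\rk(A(\xi)|_{\be^u})$; as this rank is even, either generic $\xi\in D_i$ is $\g_{(0)}$-regular, or $D_i\subseteq\g^*_{(0),\sf sing}$ and $\dim\g_{(0)}^\xi=l+2$ on it. Granting {\sf (i)}, this is {\sf (ii)}.

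Finally, {\sf (i)}. Because $A(\xi)$ depends only on $\xi_\be$, which the Killing form pairs nondegenerately with $\be$, for generic $\xi\in D_i$ one has $A(\xi)|_{\be^u}=0$ iff $\be^u$ is abelian; so $D_i$ is a divisorial component iff $\be^u$ is abelian for generic $u$ with $u_{\ap_i}=0$. If $a_i\ge 2$ this is clear: $H_l^\bullet=e_\delta\prod_k f_k^{a_k}$ is divisible by $f_i^{a_i}$ with $a_i-1\ge 1$, so every first partial of this monomial is still divisible by $f_i$, giving $\textsl{d}_\xi H_l^\bullet=0$ on all of $D_i$, hence $D_i\subseteq\g^*_{(0),\sf sing}$ and $A|_{\be^u}=0$. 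If $a_i=1$ one must show $\be^u$ is \emph{not} abelian; I would use the short grading $\g=\g^{(-1)}\oplus\g^{(0)}\oplus\g^{(1)}$ of the coweight $\varpi_i^\vee$ (short because $a_i=1$), for which $\g^{(1)}=\p_i^{\sf nil}$ is abelian, $\be\cap\g^{(0)}$ is a Borel of the Levi $\g^{(0)}$, $e_\delta$ spans the highest $\g^{(0)}$-weight line of the irreducible module $\g^{(1)}$ and $\g_{\ap_i}$ its lowest one. Writing $u=u_{(0)}+u_{(1)}$ with $u_{(0)}$ a generic (hence regular) nilpotent of $[\g^{(0)},\g^{(0)}]$ and $u_{(1)}\in\g^{(1)}$ generic with no lowest-weight component, the condition $x^{(0)}+x^{(1)}\in\be^u$ reads $[x^{(0)},u_{(0)}]=0$ and $[x^{(0)},u_{(1)}]+[x^{(1)},u_{(0)}]=0$; analysing this with the $\tri$-string structure of $\g^{(1)}$ under the principal $\tri\subset[\g^{(0)},\g^{(0)}]$ produces an element of $\be^u$ whose $\g^{(0)}$-part has nonzero $\varpi_i^\vee$-component, and then $[\varpi_i^\vee,e_\delta]=\langle\delta,\varpi_i^\vee\rangle e_\delta=a_i e_\delta=e_\delta\ne 0$ together with $e_\delta\in\be^u$ and $[\g^{(1)},e_\delta]\subseteq\g^{(2)}=0$ shows $\be^u$ is non-abelian. (Equivalently, one may compute the fundamental semi-invariant of $\g_{(0)}$ and show it is $\prod_i f_i^{a_i-1}$ up to a scalar, which gives {\sf (i)} and refines {\sf (ii)} at once.) This $a_i=1$ step — excluding spurious divisors at simple roots with unit coefficient in $\delta$ — is the main obstacle; everything else is bookkeeping with the block form and the Kostant criterion.
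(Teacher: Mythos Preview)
Your approach is genuinely different from the paper's and largely sound: the paper simply quotes \cite[Theorem~5.5]{contr} for the fundamental semi-invariant $p=\prod_i f_i^{a_i-1}$ to get {\sf (i)}, and \cite[Eq.~(5.1)]{contr} for the formula $\dim\g_{(0)}^\xi=\dim\be^e+\ind\be^e-l$ (plus the subregularity of the generic $e$) to get {\sf (ii)}. Your block-form computation $\dim\g_{(0)}^\xi=2\dim\be^u-l-\rk(A(\xi)|_{\be^u})$ is correct and recovers the paper's formula once you observe that for generic $\xi_\be$ the rank equals $\dim\be^u-\ind\be^u$. The $B$-semi-invariant argument pinning the divisorial components of $S$ to the hyperplanes $\{u_{\ap_i}=0\}$ is clean, and the $a_i\ge 2$ inclusion via $\textsl{d}H_l^\bullet|_{D_i}=0$ is a nice shortcut.

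There is, however, a real error in your argument for {\sf (ii)}. You assert that for generic $\xi\in D_i$ one has $\dim\be^u=l+1$, ``by exhibiting a point''. This is \emph{not} true in general: the generic $u$ in $\{u_{\ap_i}=0\}=\p_i^{\sf nil}$ is a Richardson element for the minimal parabolic $\p_i$, and whether $\dim\be^u=l+1$ or $l+2$ depends on whether $B{\cdot}u$ is dense in $\p_i^{\sf nil}$, equivalently whether $[f_i,u]\in[\be,u]$. Both cases occur (cf.\ \cite{g-co}; this dichotomy is used explicitly later in the paper's proof of Proposition~\ref{prop-C}). When $\dim\be^u=l+2$, your formula gives $\dim\g_{(0)}^\xi=l+4-\rk(A|_{\be^u})$, and the singularity bound $\rk(A)\le 2$ no longer forces $\dim\g_{(0)}^\xi=l+2$. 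The fix is to argue as the paper does: bound $\dim\be^u+\ind\be^u\le 2l+2$ by splitting into the two cases (if $\be^u=\g^u$ then $\ind\be^u=\ind\g^u=l$; if $\dim\be^u=l+1$ then $\ind\be^u\le l+1$), which your block formula already supports once specialised to generic $\xi_\be$.

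For {\sf (i)} in the $a_i=1$ direction, you correctly identify this as the crux, but your $\mathfrak{sl}_2$-string sketch is incomplete: to lift $x^{(0)}=\varpi_i^\vee+y$ to $\be^u$ you need $u_{(1)}+[y,u_{(1)}]\in\operatorname{Im}(\ad u_{(0)}|_{\g^{(1)}})$, and since $\g^{(1)}$ generally has several $\mathfrak{sl}_2$-strings (only one of which has lowest weight $\ap_i$), the condition $u_{\ap_i}=0$ alone does not place $u_{(1)}$ in that image, nor is it clear that some $y\in[\el_i,\el_i]^{u_{(0)}}$ corrects the other lowest-weight components. Your parenthetical remark---computing the fundamental semi-invariant directly---is exactly the route the paper takes via \cite{contr}.
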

\begin{proof}
{\sf (i)} \ By~\cite[Theorem\,5.5]{contr}, a fundamental semi-invariant of $\g_{(0)}$ is
$p=\prod_{i=1}^l f_i^{a_i-1}$. The main property of $p$ is that the union of all divisors in
$\g^*_{(0),\sf sing}$ is $\{\xi\in\g^*\mid p(\xi)=0\}$, see~\cite[Def.\,5.4]{contr}. Hence the assertion.

{\sf (ii)} \ Take a generic
$\xi\in D_i\subset \g^*_{(0),\sf sing}$. Then $\xi = y + e$, where $y\in\gt b^*$ and $e\in\ut $ is a subregular nilpotent element
of $\g $, cf.~\cite[Sect.\,5.2]{contr}. According to \cite[Eq.\,(5.1)]{contr},
\[
\dim\g ^\xi_{(0)}=\dim\gt b^e+\ind\gt b^e-l.
\]
On the one side, $\dim(B{\cdot}e)\le \dim\ut -1$, on the other, $\gt b^e\subset\g ^e$ and
$\dim\gt b^e\le l+2$. If $\gt b^e=\g ^e$, then $\ind\gt b^e=l$ \cite[Cor.\,3.4]{p03}, if $\dim\gt b^e=l+1$, then
$\ind\gt b^e\le l+1$. In any case $\dim\gt b^e+\ind\gt b^e \le 2l+2$ and hence
$\dim\g ^\xi_{(0)}=l+2$.
\end{proof}

{\bf Remark.}  Note that all $a_i=1$ if $\g$ is of type {\sf A}. That is, in that case
$\codim \g^*_{(0),\sf sing}\ge 2$.

We will need another technical tool, the pencil of skew-symmetric forms on $\g$ related
to the family $\{\pi_t(\xi)\}_{t\in \bbk\cup \infty}$ for a given $\xi\in\g^*$.  To this end,
we recall some general theory presented in the Appendix to~\cite{oy}.

Let $\eus P$ be a two-dimensional vector space of (possibly degenerate) skew-symmetric bilinear
forms on a finite-dimensional vector space $\gt v$. Set $m=\max_{A\in \eus P }\rk A$, and let
$\eus P_{\sf reg}\subset \eus P$ be the set of all forms of rank $m$. Then $\eus P_{\sf reg}$ is an
open subset of $\eus P$ and $\eus P_{\sf sing}:=\eus P\setminus \eus P_{\sf reg}$ is either $\{0\}$ or a finite union of lines.
For each $A\in \eus P$, let $\ker A\subset \gt v$ be the kernel of $A$.
Our object of interest is the subspace $L:=\sum_{A\in \eus P_{\sf reg}} \ker A$.

\begin{prop}[{cf. \cite[Theorem\,A.4]{oy}}]      \label{prop-JK}
Suppose that $\eus P_{\sf sing}= \bbk C$ with $C\ne 0$ and 
$\rk C=m-2$.
Suppose also that $\rk(A|_{\ker C})=2$ for some $A\in\eus P$.
Then {\sf (1)} $\dim (L\cap \ker C)=\dim\gt v-m$,
$
    {\sf (2)} \ \dim L = \dim \gt v-\frac{m}{2} - 1$, and  {\sf (3)} \ $A(\ker C,  L\cap \ker C)=0$.
\end{prop}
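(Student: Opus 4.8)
The plan is to pass to the Jordan--Kronecker normal form of the pencil $\eus P$ and to use the hypothesis $\rk(A|_{\ker C})=2$ to pin down the only Jordan block. Since $\eus P_{\sf sing}=\bbk C$, every form in $\eus P\setminus\bbk C$ has rank $m$; in particular the form $A$ provided by the hypothesis is regular, and $\eus P_{\sf reg}=\{A+tC\mid t\in\bbk\}$, so $L=\sum_{t\in\bbk}\ker(A+tC)$.

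First, a direct computation locates $L\cap\ker C$. Put $W=\ker C$, so $\dim W=\dim\gt v-m+2$ and $C|_W=0$; hence $\eus P$ restricts on $W$ to the single form $B:=A|_W$ of rank $2$, and $\dim\ker(A|_W)=\dim\gt v-m$. If $x\in L$, write $x=\sum_j x_j$ with $(a_jA+c_jC)(x_j,\cdot)=0$ and $a_j\ne 0$; then $A(x_j,\cdot)=-(c_j/a_j)\,C(x_j,\cdot)$ on $\gt v$, so $A(x_j,w)=(c_j/a_j)\,C(w,x_j)=0$ for every $w\in W$. Thus $L\cap W\subseteq\ker(A|_W)$, which already gives $\dim(L\cap\ker C)\le\dim\gt v-m$, and once equality is proved, assertion~{\sf (3)} follows from $A(W,L\cap W)=B(W,L\cap W)=0$.

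Next I would invoke the pencil analysis of \cite[Appendix]{oy} (the Jordan--Kronecker theorem): $\gt v=\bigoplus_i V_i$ with each $V_i$ invariant under $\eus P$ and either a Kronecker block $K_{2k_i+1}$, on which every form of $\eus P$ has rank $2k_i$, or a Jordan block attached to a singular value of $\eus P$. As $C$ is the only singular form and $\rk C=m-2$, there is a single Jordan block $J=J_{2k_0}$, on which $\rk A=2k_0$ and $\rk C=2k_0-2$; let $K_{2k_1+1},\dots,K_{2k_r+1}$ be the remaining blocks. Then $\dim\gt v=2k_0+\sum_{i=1}^r(2k_i+1)$ and $m=2k_0+\sum_{i=1}^r 2k_i$, so $\dim\gt v-m=r$. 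Since $\ker A'=\bigoplus_i\ker(A'|_{V_i})$ for every $A'\in\eus P$, we get $L=\bigoplus_i L_i$ with $L_i=\sum_{A'\in\eus P_{\sf reg}}\ker(A'|_{V_i})$; here $L_J=0$ because each $A'\in\eus P_{\sf reg}$ is nondegenerate on $J$, whereas on $K_{2k_i+1}$ the kernel line of $(A+tC)|_{K_{2k_i+1}}$ sweeps out, as $t$ varies, a $(k_i+1)$-dimensional subspace $L_i$ that moreover contains $\ell_i:=\ker(C|_{K_{2k_i+1}})$ (its value at $t=\infty$). Consequently $\dim L=r+\sum_{i=1}^r k_i$, while $L\cap\ker C=\bigoplus_{i=1}^r(L_i\cap\ell_i)=\bigoplus_{i=1}^r\ell_i$ has dimension $r=\dim\gt v-m$, which is~{\sf (1)}.

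Finally the hypothesis forces $k_0=1$: the form $A|_{\ker C}$ is block-diagonal for the decomposition $\ker C=\ker(C|_J)\oplus\bigoplus_i\ell_i$, it vanishes on each line $\ell_i$, and on the $2$-dimensional radical $\ker(C|_J)$ it is nondegenerate when $k_0=1$ but zero when $k_0\ge 2$ --- in the normal form of $J_{2k_0}$ the space $\ker(C|_J)$ is spanned by the two tops of the Jordan chains of length $k_0$, which $A$ pairs with the two bottoms. Hence $\rk(A|_{\ker C})=2$ gives $k_0=1$, so $\dim\gt v=2+2\sum k_i+r$ and $m=2+2\sum k_i$, whence $\dim L=r+\sum k_i=\dim\gt v-\frac{m}{2}-1$, which is~{\sf (2)}; and assertion~{\sf (3)} follows from the equality $L\cap\ker C=\ker(A|_{\ker C})$ forced by {\sf (1)} together with paragraph two. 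The main obstacle is the Jordan--Kronecker bookkeeping, and in particular the last step identifying the Jordan block as $J_2$ from the rank condition; the inclusion $L\cap\ker C\subseteq\ker(A|_{\ker C})$ and the block-by-block dimension counts are routine.
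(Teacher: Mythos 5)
Your argument is correct and rests on the same tool---the Jordan--Kronecker canonical form---that the paper and its reference \cite[Theorem~A.4]{oy} use, so there is no real divergence in method, but the routes through the three assertions differ in an instructive way. The paper outsources assertions {\sf (1)} and {\sf (2)} to \cite[Theorem~A.4]{oy} and proves only {\sf (3)}; you re-derive {\sf (1)} and {\sf (2)} by explicit block bookkeeping, and the step where the hypothesis $\rk(A|_{\ker C})=2$ pins the unique Jordan block to size $2$ (i.e.\ $k_0=1$) is precisely where that hypothesis earns its keep in {\sf (2)}: with $k_0=1$ one gets $\dim L=r+\sum k_i$ and $\dim\gt v-\tfrac{m}{2}-1=r+\sum k_i$. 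For {\sf (3)}, however, your paragraph~2 is already a complete proof and you undersell it. Writing $x\in L$ as $x=\sum_j x_j$ with $x_j\in\ker(a_jA+c_jC)$, $a_j\ne 0$, the identity $A(x_j,w)=-(c_j/a_j)\,C(x_j,w)=0$ for $w\in\ker C$ gives $A(L,\ker C)=0$, hence $A(\ker C, L\cap\ker C)=0$ by antisymmetry, with no block decomposition needed and no appeal to the equality in {\sf (1)}; you phrase this as conditional on the equality $\dim(L\cap\ker C)=\dim\gt v-m$, but it is not. By contrast, the paper's proof of {\sf (3)} observes that $L\cap\ker C$ sits inside $\bigoplus_{i<d}\ker C_i$ with each $\ker C_i$ one-dimensional and uses block-orthogonality of $A$; that works, but your direct computation is both shorter and logically independent of the block structure.
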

\begin{proof}
The first two assertions are proved in~\cite[Theorem\,A.4]{oy}. We briefly recall the relevant setup.

Take non-proportional $A,B\in  \eus P_{\sf reg}$. By~\cite[Theorem 1(d)]{JK}, there
is the {\it Jordan--Kronecker canonical form\/} for $A$ and $B$. This means that there is a decomposition
$\gt v=\gt v_1\oplus\ldots\oplus \gt v_d$ such that $A(\gt v_i,\gt v_j)=0=B(\gt v_i,\gt v_j)$ for $i\ne j$,
and  the pairs $A_i=A\vert_{\gt v_i}, B_i=B\vert_{\gt v_i}$ have a rather special form.
Namely,  each pair $(A_i,B_i)$ forms either a {\it Kronecker} or a {\it Jordan block\/}
(see \cite[Appendix]{oy} for more details).  Assume that $\dim\gt v_i>0$ for each $i$.

\textbullet\quad For a Kronecker block, $\dim \gt v_i=2k_i+1$, $\rk A_i=2k_i=\rk B_i$ and the same holds
for every nonzero linear combination of $A_i$ and $B_i$.
\\  \indent
\textbullet\quad For a Jordan block,  $\dim \gt v_i$ is even and
both $A_i$ and $B_i$ are non-degenerate on $\gt v_i$. Moreover, there is a unique $\lambda_i\in\bbk$
such that $\det (A_i+\lambda_i B_i)=0$ and hence $\rk(A_i+\lambda_i B_i)\le \dim\gt v_i-2$.
In particular, any Jordan block gives rise to a line $\bbk(A+\lb_i B)\subset \eus P_{\sf sing}$.

Since $\eus P_{\sf sing}$ is a sole line, the critical values $\lb_i$ for all Jordan blocks must be equal.
Furthermore, since $\rk C=m-2$, there must be only one Jordan block, and we may safely assume that
this block corresponds to $\gt v_d$.

Now, we are ready to prove assertion (3).
It is clear that $L\subset \bigoplus_{i<d} \gt v_i$ and
\[
  (L \cap \ker C) \subset \textstyle \bigoplus_{i<d} \ker C_i,
\]
where $\dim\ker C_i=1$ for each $i<d$. Since $A(\gt v_i,\gt v_j)=0$ for $i\ne j$, we obtain
$A(\ker C,L\cap \ker C)=0$.
\end{proof}

Let $\gC\subset\gS(\g)$ be the subalgebra generated by   $\gZ_{\langle\be,\ut_-\rangle}$, $e_\delta$, and $f_i$ with $1\le i\le l$. 
Recall that $H_l^\bullet\in \gZ_{\langle\be,\ut_-\rangle}$ and that $H_l^\bullet = e_\delta \prod_{i=1}^l f_i^{a_i}$ by~\cite[Lemma~4.1]{alafe}. In view of this and Thereorem~\ref{thm:b-n_polynomial},
$\gC$ has a set $\{F_k\mid 1\le k\le \bb(\gt g)+l\}$ of homogeneous generators such that
$\{F_k\mid 1\le k\le l\}$ is a basis of $\gt t$,
$F_k$ is of the form $(H_j)_{i,d_j-i}$ if $l<k<\bb(\gt g)$, and the last $l+1$ elements $F_k$ are root vectors.

By the very construction, we have
\beq             \label{eq-t}
\gZ_{\langle\be,\ut_-\rangle}\subset\gS(\g)^{\te}.
\eeq

\begin{prop}         \label{prop-C}
The subalgebra $\gC$ is algebraically closed in $\gS(\g)$.
\end{prop}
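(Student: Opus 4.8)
The plan is to show that $\gC$ is algebraically closed in $\gS(\g)$ by exhibiting it as the ring of functions that are constant on the fibres of an appropriate dominant morphism, or — more in the spirit of the preceding results — by showing that its quotient field is algebraically closed in $\bbk(\g^*)$ and that $\gC$ is integrally closed. The cleanest route: first compute $\trdeg\gC$ and $\dim\textsl{d}_\xi\gC$ at a generic point, then identify $\gC$ with the full algebra of invariants of a torus (or the pullback of $\bbk[\text{image}]$ under a morphism whose generic fibre is irreducible), since such algebras are automatically algebraically closed in the ambient polynomial ring.

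Concretely, first I would determine $\trdeg\gC$. We have $\trdeg\gZ_{\langle\be,\ut_-\rangle}=\bb(\g)$, and $\gC$ is obtained by adjoining $e_\delta$ and $f_1,\dots,f_l$. Since $H_l^\bullet=e_\delta\prod f_i^{a_i}\in\gZ_{\langle\be,\ut_-\rangle}$, once the $f_i$ are adjoined $e_\delta$ becomes algebraic over the rest (on the locus $\prod f_i\ne 0$), so $e_\delta$ contributes nothing to the transcendence degree; adjoining $f_1,\dots,f_l$ raises it by at most $l$. One must check these $l$ functions are genuinely independent modulo $\gZ_{\langle\be,\ut_-\rangle}$; this should follow from a differential computation at the point $y=e+h-f$ used in the proof of Theorem~\ref{thm:b-n_complete}, where $\textsl{d}_y\gZ_{\langle\be,\ut_-\rangle}$ has dimension $\bb(\g)$ and one can check that $\textsl{d}_y f_i$, together with $\textsl{d}_y e_\delta$, extend this span to dimension $\bb(\g)+l$ (the root vectors $f_i$ being "new" directions transverse to $\g^y$). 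Thus $\trdeg\gC=\bb(\g)+l$, matching the number of generators $F_k$.

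Next, the key structural observation is \eqref{eq-t}: $\gZ_{\langle\be,\ut_-\rangle}\subset\gS(\g)^\te$. The generators $e_\delta, f_1,\dots,f_l$ are $\te$-weight vectors whose weights $\delta,\ap_1,\dots,\ap_l$ span $\te^*$. Hence $\gC$ lies inside $\bigoplus$ of finitely many $\te$-weight spaces, and in fact $\gC$ should turn out to be exactly the subalgebra of $\gS(\g)$ generated by $\gZ_{\langle\be,\ut_-\rangle}$ together with a set of weight vectors realizing a sublattice of $\mathfrak X(T)$ of full rank $l$. The plan is: (a) show $\gC$ is stable under $T$ (clear, since it is generated by $\te$ and $T$-weight vectors); (b) on the principal open set where $\prod_{i=1}^l f_i\ne 0$, show that $\spe\gS(\g) \dashrightarrow \spe\gC$ has irreducible generic fibre — equivalently, the generic fibre of the map $\g^*\dashrightarrow \spe\gZ_{\langle\be,\ut_-\rangle}$, which by Theorem~\ref{thm:b-n_complete}/Lemma~\ref{obvious} meets each regular coadjoint orbit in a complete isotropic subvariety, gets cut down by the $l$ functions $f_i$ to something irreducible. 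Then a ring that is the ring of functions on (a dense subset of) $\spe\gC$ pulled back along a morphism with irreducible generic fibre is algebraically closed in $\gS(\g)$, because any $\phi\in\gS(\g)$ algebraic over $\gC$ is constant along generic fibres and hence (being regular) lies in $\gC$ after inverting $\prod f_i$; one then uses that $\gS(\g)$ is a UFD and $\prod f_i$ is (a product of) primes not dividing suitable things, together with polynomiality of $\gC$ from the generator count, to clear denominators.

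The main obstacle I anticipate is step (b): proving irreducibility of the generic fibre of $\g^*\dashrightarrow\spe\gC$. The subalgebra $\gZ_{\langle\be,\ut_-\rangle}$ is built from several Poisson centres $\cz_t$ via limits, so its fibres are intersections of the (reducible-in-general) level sets of the various $\cz_t$; controlling their irreducibility likely requires the explicit generator description in Theorem~\ref{thm:b-n_polynomial} (bi-homogeneous components $(H_j)_{i,d_j-i}$ plus $\te$) and a direct argument that, generically, fixing all these bi-homogeneous components plus $e_\delta$ and the $f_i$ leaves an irreducible variety — plausibly by degenerating to $\g_{(0)}=\be\ltimes\ut_-^{\sf ab}$, where the structure of $\cz_0$ is fully known (Proposition~\ref{prop:gen-Z0}), and lifting irreducibility back. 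An alternative to sidestep fibre irreducibility: show directly that the quotient field $\Q(\gC)$ is algebraically closed in $\bbk(\g^*)$ by a Galois-theoretic/valuation argument using the $T$-action (weights separate a sublattice of full rank, so no nontrivial root can be adjoined), combined with normality of $\gC$ (which follows since $\gC$ is polynomial by the generator count of Theorem~\ref{thm:b-n_polynomial} plus the $l$ extra algebraically independent root vectors). I would pursue whichever of these two the paper's machinery makes shortest; the $T$-weight argument for field-algebraic-closedness plus polynomiality of $\gC$ looks the most economical.
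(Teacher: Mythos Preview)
Your proposal misses the actual mechanism the paper uses, and the routes you sketch each have real obstacles you do not resolve.

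The paper's proof is entirely based on the criterion of \cite[Theorem~1.1]{ppy}: a graded subalgebra generated by homogeneous algebraically independent elements $F_1,\dots,F_m$ is algebraically closed in $\gS(\g)$ as soon as the differentials $\textsl{d}F_k$ are linearly independent on a \emph{big} open subset (complement of codimension~$\ge 2$). So the whole problem becomes: show $\dim\textsl{d}_\gamma\gC=\bb(\g)+l$ for generic $\gamma$ on every irreducible divisor of $\g^*$. By~\eqref{eq-cdt} the only divisors where $\dim\textsl{d}_\gamma\gZ_{\langle\be,\ut_-\rangle}$ could drop are those contained in $\g^*_{\infty,\sf sing}$ or $\g^*_{(0),\sf sing}$; these are described explicitly in Lemmas~\ref{lm-sing-inf} and~\ref{lm-sing-0}. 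On each such divisor the paper uses the Jordan--Kronecker analysis (Proposition~\ref{prop-JK}) to show that the span $L(\gamma)=\sum_{t\ne 0,\infty}\ker\pi_t(\gamma)$ has dimension only $\bb(\g)-1$, but then proves that one of the adjoined root vectors ($\te\not\subset L(\gamma)$ in the $D(\alpha)$ case, $f_i\notin L(\gamma)$ in the $D_i$ case) restores the missing dimension. The extra generators $e_\delta,f_1,\dots,f_l$ are chosen precisely so that their differentials compensate for the drop of $\dim L(\gamma)$ on every bad divisor; this is the heart of the argument and it is quite delicate (the $D_i$ case for type $\GR{B}{l}$ requires an explicit computation with root vectors).

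Your alternatives do not close the gap. Fibre irreducibility for $\g^*\dashrightarrow\spe\gC$ is at least as hard as the original statement, and you yourself flag it as the main obstacle without a plan. The ``$T$-weight plus normality'' route is also insufficient as stated: even granting that $Q(\gC)$ is algebraically closed in $\bbk(\g^*)$ (which you do not prove), combining this with normality of $\gC$ only gives that $\gC$ is integrally closed in $\gS(\g)$; it does \emph{not} give that $Q(\gC)\cap\gS(\g)=\gC$. You would still need to know that primes of $\gC$ stay prime in $\gS(\g)$, which is again the codimension-$2$ condition of~\cite{ppy} in disguise. The missing idea is therefore the differential criterion and the divisor-by-divisor verification via Lemmas~\ref{lm-sing-inf},~\ref{lm-sing-0} and Proposition~\ref{prop-JK}.
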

\begin{proof}
For $\gamma\in\g^*$, we set $L(\gamma)= \sum_{t\ne 0,\infty} \ker \pi_t(\gamma)$ and
$V(\gamma)=\textsl{d}_\gamma \gZ_{\langle\be,\ut_-\rangle}$.
If $\gamma\in\g^*_{(t),\sf reg}$ for all $t\ne 0,\infty$, then $L(\gamma)\subset V(\gamma)$ in
view of \eqref{span-dif}.
It follows from \eqref{eq-t} that
$\gamma([V(\gamma),\te])=0$. Consider the following condition on $\gamma$:
\begin{itemize}
\item[($\diamond$)]  \qquad
$\gamma$ is nonzero on at least $l$ elements among
$e_\delta,f_1,\ldots,f_l$.
\end{itemize}
Note that condition~($\diamond$) holds on a big open subset and that the $\te$-weights of the $l$
elements involved, say $x_1,\ldots,x_l$, are
linearly independent. The linear independence of the selected $l$-tuple of $\te$-weights implies that if
$\gamma$ satisfies ($\diamond$), 
$\gamma([\te, x])=0$, and $x\in\left<x_i \mid 1\le i \le l\right>_{\bbk}$, then $x=0$.
Hence here 
$\dim\textsl{d}_\gamma \gC \ge \dim V(\gamma)+l$. In the proof, we compute $\dim\textsl{d}_\gamma \gC$ 
only at points $\gamma$ 
satisfying  ($\diamond$). 

We readily obtain that $\trdeg\gC=\bb(\g)+l$ and hence the homogeneous generators $F_k$ with
$1\le k\le \bb(\gt g)+l$ are algebraically independent. The goal is to show that the differentials of
the polynomials $F_k$ are linearly independent on a {\bf big} open subset. Note that the assertion is
obvious for $\g =\tri$, because here $\gC=\gS(\g)$.

Let $\Omega\subset \g^*$ be the dense open subset defined in Theorem~\ref{thm:dim-Z}.
Then $\dim V(\gamma)=\bb(\g)$ for any $\gamma\in \Omega$. However, the complement of $\Omega$ may contain divisors; i.e., the divisors lying in  $\g^*_{(0),\sf sing}$ or in  $\g^*_{\infty,\sf sing}$, see~\eqref{eq-cdt}.
\\ \indent
\textbullet\quad Concentrate first on the irreducible divisors in $\g^*_{\infty,\sf sing}$. Such
a divisor $D(\ap)$ is the hyperplane defined by $\ap\in \Delta^+$, see Lemma~\ref{lm-sing-inf}{\sf (i)}.
There is a non-empty open subset ${\mathcal U}\subset D(\ap)$ such that any $\tilde\gamma\in {\mathcal U}$  
is regular for all $t\ne \infty$ and satisfies  $\dim\g ^{\tilde\gamma}_{(\infty)}=l+2$, 
see~\eqref{eq-cdt} and Lemmas~\ref{lm-sing-0},~\ref{lm-sing-inf}.
We have $\te\subsetneq \g ^{\tilde\gamma}_{(\infty)}$. Recall from the proof of Lemma~\ref{lm-sing-inf}
that there is a nonzero $e\in \gt u\cap \gt g^{\tilde\gamma}_{(\infty)}$. 
Let $\mu$ be a maximal element in the subset 
$\{\beta\in\Delta^+\mid (e,f_\beta)\ne 0\}$. 
Then $([\gt u_-,e],f_\mu)=0$. Hence $e\in\gt g^{\gamma}_{(\infty)}$ for any 
$\gamma=\tilde\gamma+c f_{\mu}$, where $c\in\bbk$ and $f_\mu$ is  regarded as a linear  function on $\gt g$. 
For $h\in\gt t$ such that $[h,e_{\mu}]=e_{\mu}$, 
we have $\gamma([h,e])=\tilde\gamma([h,e])+c(f_\mu,e)$ and here $(f_\mu,e)\ne 0$. 
For a generic $c\in\bbk$, one obtains $\gamma([\gt t,\gt g^{\gamma}_{(\infty)}])\ne 0$ and 
$\gamma\in{\mathcal U}$. 
On the one hand,  
 $\rk\pi(\gamma)|_{\g ^{\gamma}_{(\infty)}}\ge 2$, on the other hand, $\rk\pi(\gamma)|_{\g^{\gamma}_{(\infty)}}\le 2$
 by \cite[Lemma\,A.3]{oy}.
 According to  \cite[Lemma\,A.1]{mrl}, $L(\gamma)=\sum_{t\ne\infty} \ker \pi_t(\gamma)$.
Now Proposition~\ref{prop-JK} implies  that $\dim L(\gamma)= \bb(\g)-1$ and
\[
    \pi(\gamma)(L(\gamma)\cap \g^{\gamma}_{(\infty)}, \g^{\gamma}_{(\infty)})=0.
\]
By the construction  $\pi(\gamma)(\te,\g^{\gamma}_{(\infty)})\ne 0$. Hence $\te\not\subset  L(\gamma)$ and
$\dim (L(\gamma)+\te)>\dim L(\gamma)$.  For a generic $\gamma\in D(\ap)$, we have then  $\dim V(\gamma)=\bb(\g)$  and
$\dim\textsl{d}_{\gamma} \gC=\bb(\g)+l$.
\\ \indent
\textbullet\quad
Consider a divisor $D_i\subset \g^*_{(0),\sf sing}$ that is defined by
$f_i$ with $a_i>1$, see Lemma \ref{lm-sing-0}. We can safely assume here that $\g$ is not of type
{\sf A}.  Otherwise $\g^*_{(0),\sf sing}$ has no divisors, cf. \cite[Prop.\,4.3]{alafe}.
Because $[\gt b,f_i]_{(0)}\subset \bbk f_i$, we have $f_i\in\g ^{\tilde\gamma}_{(0)}$ for any $\tilde\gamma\in D_i$.
Let $\gamma\in D_i$ be generic.
Lemma~\ref{lm-sing-0} shows that $\rk\pi_0(\gamma)=\dim\g -l-2$ and that $\rk\pi_\infty(\gamma)=l$. 
By \cite[Lemma\,A.1]{mrl}, $L(\gamma)=\sum_{t\ne 0} \ker \pi_t(\gamma)$.
The next task  is to
show that $\gamma$ is nonzero on $[f_i,\g ^\gamma_{(0)}]$. In order to do this, we employ considerations 
from \cite[Sect.~5.2]{contr}. 

Set $\gt p=\gt p_i=\gt b\oplus\bbk f_i$. Then $\overline{(D_i\cap\gt u)}=\gt p^{\sf nil}$ is the nilpotent radical of $\gt p$. 
Write $\gamma=y+e$, where $y\in\gt b^*\simeq\gt b_-$ and $e\in\gt p^{\sf nil}$ is a subregular element of $\gt g$, cf. the proof of Lemma~\ref{lm-sing-0}{\sf (ii)}. We may safely assume that $e$ is a Richardson
element, i.e.,  
$Pe\subset\gt p^{\sf nil}$ is the dense orbit of the parabolic subgroup $P\subset G$ with $\Lie(P)=\gt p$. 
There are two possibilities, either $[\gt p,e]=\gt p^{\sf nil}$ is equal to $[\gt b,e]$ or not. 

Suppose that $\dim[\gt b,e]<\dim\gt p^{\sf nil}$, then there is a nonzero $f\in\gt p^{\sf nil}_-$ such that $(f,[\gt b,e])=0$. At the same time, 
$(f,[\gt p,e])\ne 0$. Therefore $(f,[f_i,e])=([f,f_i],e)\ne 0$. Note that
$\gamma([f,\gt g]_{(0)})=(e,[f,\gt b])=0$, i.e., $f\in\gt g_{(0)}^\gamma$. 
We have also $\gamma([f,f_i])=(e,[f,f_i])\ne 0$. Thus $\gamma$ is nonzero on $[f_i,\gt g^\gamma_{(0)}]$. 

Suppose now that $[\gt b,e]=\gt p^{\sf nil}$. In this case, $\dim\gt b^e=l+1$ and   
$Be$ is dense and open in $\gt p^{\sf nil}$. 
 By \cite[Lemma~5.10]{contr}, $\gt b^e$ is abelian.
Set ${\mathcal U}_0=\{\gamma\in D_i \mid \dim\gt g^\gamma_{(0)}=l+2\}$. 
Then $\gamma=e+y\in {\mathcal U}_0$ for any $y\in\gt b_-$ 
in view of a direct calculation from~\cite[Lemma~4.8]{contr}. 
 Furthermore, $\gamma([f_i,\gt g_{(0)}^\gamma])=0$ if and only if $(f_i,[e+y,\gt g^\gamma_{(0)}])=0$. 
As a point of an appropriate Grassmannian, the subspace $\g^\gamma_{(0)}$ depends on 
$\gamma\in{\mathcal U}_0$ continuously. Therefore it suffices to find just one point $\tilde\gamma\in{\mathcal U}_0$ such that $\tilde\gamma([f_i,\g ^{\tilde\gamma}_{(0)}])\ne 0$. 

Consider first the case, where $[f_i,e_\delta]\ne 0$. Set $\tilde\gamma=e+f_{\delta-\alpha_i}$. 
Then $e_\delta\in\gt g^{\tilde\gamma}$. Here $[f_i,e_\delta]$ is a nonzero scalar multiple of 
$e_{\delta-\alpha_i}$, hence $\tilde\gamma([f_i,e_\delta])=(f_{\delta-\alpha_i},[f_i,e_\delta])\ne 0$.  

In the remaining cases, $(\delta,\alpha_i)=0$, $\gt b^e$ is abelian, and still $a_i>1$. This is possible if and only if $\gt g$ is of type ${\sf B}_l$ with $l\ge 3$ and $i\ge 3$, see \cite{g-co} and \cite[Prop.~5.13]{contr}. 
As a Richardson element in $\p^{\sf nil}$,  we take $e=e_{\alpha_{i-1}+\alpha_i}+\sum_{j\ne i} e_j$; 
next $\beta=\delta-(\alpha_2{+}\alpha_3{+}\ldots{+}\alpha_i)$ and 
$y=f_\beta$. There is a standard choice of root vectors related to elementary skew-symmetric matrices. 
It leads, for example, to $e_{\alpha_{i-1}+\alpha_i}=[e_{i-1},e_i]$. 
After such a normalisation, $\xi:=e_{\beta+\alpha_i}-e_\beta\in\gt b^e$. 
Furthermore, $\ad\!_{(0)}^*(\xi)f_\beta=-[e_\beta,f_\beta]$ and there is 
$$\eta\in\left< f_j, [f_{i-1},f_i] \mid j\ne i-1,i\right>_{\bbk}$$ such that  
$\xi+\eta\in\g^{\tilde\gamma}_{(0)}$ for $\tilde\gamma=e+y$. 
Finally $(e+y, [f_i,\xi+\eta])=(e,[f_i,\eta])+(f_\beta,[f_i,e_{\beta+\alpha_i}])$ 
is nonzero, because $([e,f_i],\eta)=0$ and $([f_\beta,f_i],e_{\beta+\alpha_i})\ne 0$.

Now we know that $\pi(\gamma)(f_i,\g ^\gamma_{(0)})\ne 0$. By \cite[Lemma\,A.3]{oy},
$\rk(\pi(\gamma)|_{\g ^\gamma_{(0)}})\le 2$, hence the rank in question is equal to $2$.
According to Proposition~\ref{prop-JK},  $\dim L(\gamma) = \bb(\g)-1$
and  $f_i\not\in L(\gamma)$.

Note that  $\pi(\gamma)(f_i,\te)=0$. Furthermore, if $x\in\left< e_\delta, f_j \mid j\ne i \right>_{\bbk}$
and $\pi(\gamma)(\te,x)=0$, then $x=0$. Therefore $\dim\textsl{d}_\gamma \gC=\bb(\g)+l$.
Since $\textsl{d}_\gamma \gC=\left<\textsl{d}_\gamma F_k \mid 1\le k\le \bb(\gt g)+l\right>_{\bbk}$,
the goal is achieved, the differentials $\textsl{d}F_k$
are linearly independent on a big open subset.
According to  \cite[Theorem \,1.1]{ppy}, the subalgebra $\gC$ is algebraically closed in $\gS(\g)$.
\end{proof}

\begin{thm}                   \label{max-u}
The algebra  $\gZ_{\langle\be,\ut_-\rangle}$ is a maximal Poisson-commutative
subalgebra of\/ $\gS(\g)$.
\end{thm}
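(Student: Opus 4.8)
The plan is to argue by contradiction: suppose $\gZ_{\langle\be,\ut_-\rangle}$ is not maximal, so there is a Poisson-commutative subalgebra $\gA\subsetneq\gS(\g)$ strictly containing it, and pick $F\in\gA\setminus\gZ_{\langle\be,\ut_-\rangle}$. Since $\gZ_{\langle\be,\ut_-\rangle}$ has transcendence degree $\bb(\g)$ — the maximal possible for a {\sf PC}-subalgebra — $F$ must be algebraically dependent on $\gZ_{\langle\be,\ut_-\rangle}$; in fact $\trdeg\gA=\bb(\g)$ as well. The key is to exploit the auxiliary algebra $\gC$ from Proposition~\ref{prop-C}: $\gC$ is generated by $\gZ_{\langle\be,\ut_-\rangle}$ together with $e_\delta$ and $f_1,\dots,f_l$, has $\trdeg\gC=\bb(\g)+l$, and is algebraically closed in $\gS(\g)$. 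The first step is therefore to show that any $F\in\gA$ automatically lies in $\gC$. Since $\gZ_{\langle\be,\ut_-\rangle}\subset\gS(\g)^\te$, the torus $\te$ acts on the algebraic extension generated by $\gZ_{\langle\be,\ut_-\rangle}$ and $F$; decomposing $F$ into $\te$-weight components and using that $\gA$ is Poisson-commutative, one checks each weight component is again in (a suitable completion of) $\gA$, and then that its weight must be expressible through the weights of $e_\delta,f_1,\dots,f_l$ — equivalently $F$ lies in the algebraic closure of $\gC$, which is $\gC$ itself.

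Granting $\gA\subseteq\gC$, the remaining task is to locate $\gZ_{\langle\be,\ut_-\rangle}$ \emph{inside} $\gC$ as a maximal Poisson-commutative subalgebra. Here one computes the Poisson brackets among the extra generators: from the contraction formulas, $\{f_i,f_j\}$, $\{e_\delta,f_i\}$, and $\{f_i,(H_j)_{k,d_j-k}\}$ can be written out, and one sees that $e_\delta$ and the $f_i$ do \emph{not} Poisson-commute with all of $\gZ_{\langle\be,\ut_-\rangle}$ — for instance $H_l^\bullet=e_\delta\prod f_i^{a_i}$ lies in $\gZ_{\langle\be,\ut_-\rangle}$, and bracketing it against individual $f_i$'s or against $e_\delta$ produces nonzero elements unless one stays within $\gZ_{\langle\be,\ut_-\rangle}$. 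More precisely, one shows that the centraliser of $\gZ_{\langle\be,\ut_-\rangle}$ inside $\gC$ (with respect to the Lie--Poisson bracket) is exactly $\gZ_{\langle\be,\ut_-\rangle}$: any element of $\gC$ involving the root vectors $e_\delta, f_i$ nontrivially fails to commute with some bihomogeneous component $(H_j)_{k,d_j-k}$ already present in $\gZ_{\langle\be,\ut_-\rangle}$. Combining this with $\gA\subseteq\gC$ forces $\gA\subseteq \gZ_{\langle\be,\ut_-\rangle}$, a contradiction.

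The main obstacle is the first step — proving $\gA\subseteq\gC$ — because it requires controlling an \emph{a priori arbitrary} Poisson-commutative extension rather than a single explicit element, and the weight-decomposition argument must be run carefully: one needs that $F\in\gA$ Poisson-commutes with the Cartan $\te\subset\gZ_{\langle\be,\ut_-\rangle}$, hence is a sum of $\te$-weight vectors, each of which must separately lie in $\gA$ (using that $\gA$ is a subalgebra and that $\te\subset\gA$ is a maximal torus acting semisimply), and then that algebraic dependence on $\gZ_{\langle\be,\ut_-\rangle}$ together with these weights pins $F$ down to lie in the integral/algebraic closure of $\gC$. Once inside $\gC$, the Poisson-bracket bookkeeping with $H_l^\bullet$ and the $f_i$ is essentially the same normalisation computation already carried out in the proof of Proposition~\ref{prop-C}, so the second half is routine. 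A cleaner alternative for the second step, which I would also pursue, is dimensional: since $\trdeg\gC=\bb(\g)+l$ and $\gC$ is generated by $\gZ_{\langle\be,\ut_-\rangle}$ plus $l$ more algebraically independent elements, while $\trdeg\gA=\bb(\g)$, the subalgebra $\gA$ is a codimension-$l$ subvariety of $\spe\gC$; the Poisson-commutativity constraint, together with the explicit nonvanishing brackets, then leaves $\gZ_{\langle\be,\ut_-\rangle}$ as the only such subalgebra containing it.
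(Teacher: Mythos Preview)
Your proposal has the right skeleton---reduce to $\gA\subseteq\gC$ via Proposition~\ref{prop-C}, then determine what in $\gC$ can Poisson-commute with $\gZ_{\langle\be,\ut_-\rangle}$---but you have inverted where the work lies and overcomplicated both halves.

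The first step, which you call the ``main obstacle'', is in fact a one-liner and needs no weight decomposition at all. Any $x\in\gA$ is algebraic over $\gZ_{\langle\be,\ut_-\rangle}$ by the transcendence-degree bound, hence algebraic over $\gC\supset\gZ_{\langle\be,\ut_-\rangle}$, hence $x\in\gC$ simply because $\gC$ is algebraically closed in $\gS(\g)$ (Proposition~\ref{prop-C}). There is nothing to argue about which $\te$-weights can occur, and no ``suitable completion'' of $\gA$ is needed.

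The second step is also much simpler than the Poisson-bracket bookkeeping you outline. You never need to bracket $x$ against the bihomogeneous components $(H_j)_{k,d_j-k}$; bracketing against $\te\subset\gZ_{\langle\be,\ut_-\rangle}$ alone already finishes the proof. Since $\te\subset\gA$ and $\gA$ is Poisson-commutative, $x\in\gC^\te$. Now $\gC$ is polynomial on a basis of $\te$, the components $(H_j)_{i,d_j-i}$ with $(j,i)\ne(l,1)$, and the root vectors $e_\delta,f_1,\dots,f_l$; a monomial $e_\delta^{\,c}f_1^{c_1}\cdots f_l^{c_l}$ has $\te$-weight $c\delta-\sum_i c_i\ap_i$, which vanishes iff $c_i=ca_i$ for all $i$, i.e., iff the monomial is $(H_l^\bullet)^c$. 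Hence $\gC^\te=\gZ_{\langle\be,\ut_-\rangle}$ exactly, and $\gA=\gZ_{\langle\be,\ut_-\rangle}$. This is precisely the paper's argument. Your proposed route via brackets with $H_l^\bullet$ and the $(H_j)_{k,d_j-k}$ would presumably also succeed, but it is a detour: the whole ``centraliser-in-$\gC$'' computation collapses to a weight-zero condition on monomials in $l+1$ root vectors.
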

\begin{proof}  Let $\gA\subset \gS(\g)$ be a Poisson-commutative subalgebra and
$\gZ_{\langle\be,\ut_-\rangle}\subset \gA$. Since
$\trdeg \gZ_{\langle\be,\ut_-\rangle}=\bb(\g)=\trdeg\gA$, each element $x\in\gA$ is algebraic over
 $\gZ_{\langle\be,\ut_-\rangle}$. Hence it is also algebraic over $\gC$ and by Proposition~\ref{prop-C},
we have $x\in\gC$.  Since $\te\subset  \gZ_{\langle\be,\ut_-\rangle}$,  we have $\{\te,x\}=0$. The
algebra of $\te$-invariants in
$\gC$ is generated by  $\gZ_{\langle\be,\ut_-\rangle}$ and the monomials
$e_\delta^c f_1^{c_1}\ldots f_{l}^{c_l}$ such that $c_i=ca_i$.
Each such monomial is a power of $H_l^\bullet$. Therefore $x\in  \gZ_{\langle\be,\ut_-\rangle}$ and
$\gZ_{\langle\be,\ut_-\rangle}=\gA$.
\end{proof}

\section{The Poisson-commutative subalgebra $\gZ_{\langle\be,\g_0\rangle}$}
\label{sect:g0-b}

If $\sigma$ is an involution of $\g$, then $\g=\g_0\oplus \g_1$, where
$\g_i=\{x\in\g\mid \sigma(x)=(-1)^ix\}$. As is well-known, $\g_0$ is a spherical subalgebra of $\g$.
Therefore, there is a Borel subalgebra $\be$ such that $\g_0+\be=\g$.

An involution $\sigma$ is said to be of {\it maximal rank}, if $\g_1$ contains a Cartan subalgebra of $\g$. Then $\dim \g_1=\dim \be$, $\dim\g_0=\dim\ut$, and such $\sigma$ is unique up to $G$-conjugation.
Therefore, in the maximal rank case, there is a Borel subalgebra $\be$ such that
\beq     \label{eq:direct-b-g0}
   \be\oplus\g_0=\g .
\eeq
Recall that (for $\bbk=\BC$) there is a bijection between the (conjugacy classes of complex) involutions of $\g$
and the real forms of $\g$, see e.g.~\cite[Ch.\,4,\ 1.3]{t41}. Under this bijection the involution of
maximal rank corresponds to the split real form of $\g$. This bijection also allows us to associate the Satake diagram~\cite[Ch.\,4,\ 4.3]{t41} to any involution.
In this section, we assume that $\sigma$ is of maximal rank and take $(\h,\rr)=(\be,\g_0)$ such that
Eq.~\eqref{eq:direct-b-g0} holds. As in Section~\ref{sect:b-n}, to describe the generators of
$\gZ_{\langle\be,\g_0\rangle}$, we need a set of generators for the  Poisson centres $\cz_0=\cz\gS(\be\ltimes \g_0^{\sf ab})$ and $\cz_\infty=\cz\gS(\g_0\ltimes\be^{\sf ab})$.
By the Independence Principle of Section~\ref{sect:2}, we have
\[
\text{$\be\ltimes \g_0^{\sf ab}\simeq \be\ltimes\ut^{\sf ab}_-$ \quad and \quad
 $\g_0\ltimes\be^{\sf ab}\simeq \g_0\ltimes\g_1^{\sf ab}$.}
\]
Hence the structure of $\cz_0$ is already described in Prop.~\ref{prop:gen-Z0},
whereas the Poisson centre of $\gS(\g_0\ltimes\g_1^{\sf ab})$ is described in \cite{coadj}. Namely,
$\cz\gS(\g_0\ltimes\g_1^{\sf ab})$ is freely generated by the bi-homogeneous components of $\{H_i\}$
of minimal degree w.r.t. $\g_0$, i.e., of maximal degree w.r.t. $\g_1$ (or $\be$). In particular,
any generating system $H_1,\dots,H_l\in \gS(\g)^\g$ is a $\g_0$-{\sf g.g.s.}

\begin{thm}      \label{thm:g0-b-polynomial}
The algebra $\gZ_{\langle\be,\g_0\rangle}$ is polynomial. It is freely generated by the bi-homogeneous
components $\{(H_j)_{i,d_j-i} \mid 1\le j\le l,\ 1\le i \le d_j\}$.
\end{thm}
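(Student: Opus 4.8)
The plan is to mimic the argument for $\gZ_{\langle\be,\ut_-\rangle}$ (Theorem~\ref{thm:b-n_polynomial}), replacing the input on the two Poisson centres by the facts recalled just above. First I would invoke Theorem~\ref{thm:main3-1}: the algebra $\gZ_{\langle\be,\g_0\rangle}$ is generated by $\cz_0$, $\cz_\infty$, and the bi-homogeneous components $(H_j)_{i,d_j-i}$ of a fixed set of basic invariants $H_1,\dots,H_l$ of $\gS(\g)^\g$. By Proposition~\ref{prop:gen-Z0} (applied via the Independence Principle isomorphism $\be\ltimes\g_0^{\sf ab}\simeq \be\ltimes\ut_-^{\sf ab}$), the centre $\cz_0$ is freely generated by the components $H_j^\bullet$ of bi-degree $(1,d_j-1)$ — these are among the $(H_j)_{i,d_j-i}$, so $\cz_0$ contributes nothing new. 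By the result of~\cite{coadj} recalled above (via $\g_0\ltimes\be^{\sf ab}\simeq\g_0\ltimes\g_1^{\sf ab}$), the centre $\cz_\infty$ is freely generated by the components $H_{j,\bullet}$, the bi-homogeneous components of minimal $\g_0$-degree (equivalently maximal $\be$-degree), which are again among the $(H_j)_{i,d_j-i}$. Hence $\gZ_{\langle\be,\g_0\rangle}$ is generated purely by the set $\{(H_j)_{i,d_j-i}\}$.

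Next I would trim this generating set down to the one in the statement, i.e.\ restrict to $1\le i\le d_j$, discarding the components of bi-degree $(0,d_j)$. The component $(H_j)_{0,d_j}$ is the restriction of $H_j$ to $\be^\perp\subset\g^*$; identifying $\g\cong\g^*$ by the Killing form, $\be^\perp=\ut$ (the nilradical), and since $H_j$ is $G$-invariant it vanishes on the nilpotent cone, so $(H_j)_{0,d_j}=0$ for all $j$. Therefore the proposed generating set $\{(H_j)_{i,d_j-i}\mid 1\le j\le l,\ 1\le i\le d_j\}$ already generates $\gZ_{\langle\be,\g_0\rangle}$. Its cardinality is $\sum_{j=1}^l d_j = \bb(\g)$.

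It remains to show these $\bb(\g)$ generators are algebraically independent — then they are automatically nonzero, and the algebra is polynomial. This follows from the dimension count: the $2$-splitting $\g=\be\oplus\g_0$ is non-degenerate (both $\be$ and $\g_0$ are spherical — $\be$ trivially, $\g_0$ because it is the fixed-point algebra of an involution; compare Theorem~\ref{thm:c=0}), so Theorem~\ref{thm:dim-Z}(2) gives $\trdeg\gZ_{\langle\be,\g_0\rangle}=\bb(\g)$. A spanning set of size equal to the transcendence degree of the algebra it generates must be algebraically independent. This forces the generators to be nonzero as well, and establishes that $\gZ_{\langle\be,\g_0\rangle}$ is a polynomial ring on the listed generators.

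The only genuinely delicate point I anticipate is making sure that the Independence Principle applications are legitimate — that $\cz_0$ and $\cz_\infty$, as Poisson centres of $\g_{(0)}=\be\ltimes\g_0^{\sf ab}$ and $\g_{(\infty)}=\g_0\ltimes\be^{\sf ab}$, are really computed by the cited results (Proposition~\ref{prop:gen-Z0} and~\cite{coadj}) after the abstract isomorphisms $\be\ltimes\g_0^{\sf ab}\simeq\be\ltimes\ut_-^{\sf ab}$ and $\g_0\ltimes\be^{\sf ab}\simeq\g_0\ltimes\g_1^{\sf ab}$, and that under these identifications the distinguished generating components $H_j^\bullet$, $H_{j,\bullet}$ correspond to genuine bi-homogeneous components of the $H_j$ relative to the splitting $\g=\be\oplus\g_0$. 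Once those bookkeeping identifications are in place — which is routine given the Independence Principle and the fact that $H_j^\bullet\in\cz\gS(\be\ltimes\g_0^{\sf ab})$, $H_{j,\bullet}\in\cz\gS(\g_0\ltimes\be^{\sf ab})$ by~\cite[Prop.\,3.1]{coadj} — the rest is the same formal argument as in Theorem~\ref{thm:b-n_polynomial}, with no case analysis needed.
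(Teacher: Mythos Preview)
Your proposal is correct and follows essentially the same route as the paper's proof: invoke Theorem~\ref{thm:main3-1}, use the descriptions of $\cz_0$ and $\cz_\infty$ (via the Independence Principle and the results of Proposition~\ref{prop:gen-Z0} and~\cite{coadj}) to reduce the generating set to the bi-homogeneous components of the $H_j$, observe that $(H_j)_{0,d_j}$ vanishes since $\be^\perp=\ut$, and conclude by the transcendence degree count from Theorem~\ref{thm:dim-Z}. Your explicit flagging of the Independence Principle bookkeeping is a reasonable caution; the paper treats this step as already settled by the discussion preceding the theorem.
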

\begin{proof}
It follows from the above discussion and Theorem~\ref{thm:main3-1} that $\gZ_{\langle\be,\g_0\rangle}$
is generated by the bi-homogeneous components of all $\{H_i\}$. The total number of all
bi-homogeneous components equals $\sum_{j=1}^l(d_j+1)=\bb(\g)+l$. As in the proof of
Theorem~\ref{thm:b-n_polynomial}, the component $(H_j)_{0,d_j}$ is the restriction of $H_j$ to
$\be^\perp$. Under the identification of $\g$ and $\g^*$, we have $\be^\perp=\ut$. Therefore
$(H_j)_{0,d_j}\equiv 0$ for all $j$. Thus, there remain
at most $\bb(\g)$ nonzero bi-homogeneous components and, in view of Theorem~\ref{thm:dim-Z},
these components must be nonzero and algebraically independent.
\end{proof}

\noindent
Thus, we have obtained a polynomial Poisson-commutative subalgebra $\gZ_{\langle\be,\g_0\rangle}$
of $\gS(\g)$ of maximal transcendence degree.

\begin{ex}    \label{ex:sl-so}
{\sf (1)} \ If $\g=\sln$ and  $\sigma$ is of maximal rank, then $\g_0=\son$.  Here
$\dim\g^*_{(0),\sf sing}\le \dim\g  - 2$ by \cite[Theorem\,3.3]{coadj} and  $\dim\g^*_{\infty,\sf sing}\le \dim\g -2$
by~\cite[Section\,4]{alafe}. In view of \eqref{eq-cdt}, this implies that the open subset  $\Omega$ of
Theorem~\ref{thm:dim-Z} is big. Thus, the differentials of the free generators of
$\gZ_{\langle\be,\g_0\rangle}$ are linearly independent on the big open subset $\Omega$.
By \cite[Theorem \,1.1]{ppy}, this means that $\gZ_{\langle\be,\g_0\rangle}$ is an algebraically closed subalgebra of
$\gS(\g)$. Since $\trdeg \gZ_{\langle\be,\g_0\rangle}$ is maximal possible among all
{\sf PC}-subalgebras, $\gZ_{\langle\be,\g_0\rangle}$ is a {\bf maximal} {\sf PC}-subalgebra of
$\gS(\sln)$.

{\sf (2)} \ By~\cite[Theorem\,4.4]{alafe}, if $\g$ is simple, but $\g\ne\sln$, then $\dim\g^*_{\infty,\sf sing}= \dim\g -1$. Therefore, the above argument does not generalise. Still, this does not prevent
$\gZ_{\langle\be,\g_0\rangle}$ from being a maximal {\sf PC}-subalgebra. Actually,
we do not know yet whether $\gZ_{\langle\be,\g_0\rangle}$ is maximal for the other simple $\g$.
 \end{ex}

\begin{rmk}   \label{rmk:any-invol}
If $\sigma$ is not of maximal rank, then $\dim\g_0>\dim\ut$ and the sum $\g_0+\be=\g$  {\bf cannot} be
direct. Given $\g_0$, one can choose a generic ``opposite'' Borel subalgebra $\be$ such that
$\dim(\be\cap\g_0)$ is minimal possible and $\be\cap\g_0$ is closely related to
a Borel subalgebra of a certain Levi subalgebra. Namely,
there is a parabolic subalgebra $\p\supset\be$, with the standard Levi subalgebra $\el\subset\p$, such
that $[\el,\el]\subset \p\cap \g_0\subset \el$ and $\be\cap\g_0$ is a Borel subalgebra of
$\es:=\p\cap \g_0$~\cite[Chapters\,1,\,2]{these-p}. (The semisimple algebra $[\el,\el]$ corresponds to
the subset of black nodes of the Satake diagram of $\sigma$.) Therefore, there is always a {\it solvable\/}
subalgebra $\h\subset\be$ normalised by $\te$ such that $\h\oplus (\g_0\cap\be)=\be$ and hence
$\h\oplus\g_0=\g$. Here $\p^{\sf nil}\subset\h\subset \p^{\sf nil}\oplus\z(\el)$, where
$\z(\el)$ is the centre of $\el$.
Hence {\it\bfseries any} involution $\sigma$ gives rise to a natural $2$-splitting of $\g$.
But this $\h$ not necessarily spherical. A sufficient condition for sphericity is that
the Satake diagram of $\sigma$ has no black nodes. (This
is equivalent to that $\g_1\cap\g_{\sf reg}\ne\varnothing$.) Then $\p=\be$ and $\be\cap\g_0\subset \te$.
Hence $\h\supset\ut=\be^{\sf nil}$ and thereby $\h$ is spherical.
Thus any involution of $\g$ having the property that $\g_1\cap\g_{\sf reg}\ne\varnothing$ gives rise to a {\sl non-degenerate} $2$-splitting.
\\ \indent \textbullet \ If $\g$ is simple, then such involutions that are not of maximal rank exist only for
$\GR{A}{n}$, $\GR{D}{2n+1}$, and $\GR{E}{6}$.
However, it is not yet clear how to describe explicitly the Poisson centre
$\cz\gS(\h\ltimes\g_0^{\sf ab})$ if \ $\h\ne\be$.
\\ \indent \textbullet \ Yet another similar possibility is the semisimple algebra
$\g\oplus\g\simeq\g\times\g$, where $\g$ is simple and $\sigma$ is the permutation of summands.
Here everything can be accomplished explicitly, see the following section.
\end{rmk}

\section{Poisson-commutative subalgebras related to a $2$-splitting of $\g\times\g$}
\label{sect:k-b}

\noindent
In this section, we consider in detail the good case mentioned at the end of Remark~\ref{rmk:any-invol}
and its application to Lie algebras over $\BR$.

Let $\tau$ be the involution of $\tilde\g:=\g\oplus\g\simeq\g\times\g$ such that $\tau(x_1,x_2)=(x_2,x_1)$.
Then $\tilde\g_0=\Delta_\g\simeq\g $ is the usual diagonal in $\g\times\g$ and $\tilde\g_1$
is the antidiagonal $\Delta^{(-)}_\g=\{(x,-x)\mid x\in\g\}$. Here a generic opposite Borel subalgebra of
$\tilde\g$ for $\Delta_\g$ is $\be\times\be_-$ and $\Delta_\g\cap(\be\times\be_-)=\Delta_\te$.
It follows that a complementary solvable subalgebra for $\Delta_\g$ is
\[
   \h=\Delta^{(-)}_\te\oplus (\ut\times\ut_-) ,
\]
where $\Delta^{(-)}_\te=\{(x,-x)\mid x\in\te\}$.
This yields the $2$-splitting
\beq    \label{eq:2-twisted}
  \g\times\g=\h\oplus\Delta_\g ,
\eeq
associated with $\tau$ in the sense of Remark~\ref{rmk:any-invol}.
Next step is to prove that this $2$-splitting is non-degenerate and both related {\sf IW}-contractions
of $\g\times\g$ have a polynomial ring of symmetric invariants.

By the Independence Principle,
the {\sf IW}-contraction $(\g\times\g)_{(\infty)}=\Delta_\g\ltimes\h^{\sf ab}$ is isomorphic to the {\it Takiff Lie algebra\/} $\g\ltimes\g^{\sf ab}$.
A description of the symmetric invariants of $\g\ltimes\g^{\sf ab}$ is due to Takiff~\cite{takiff},
cf. also \cite{p05}. This implies that there is a good generating system here. More explicitly,
let $\{H_{j,I}, H_{j,II}\mid 1\le j\le l\}$ be the obvious set of
basic symmetric invariants of $\g\times\g$. Then $\{H_{j,I} \pm H_{j,II}\mid 1\le j\le l\}$ is 
a $(\Delta_\g)$-{\sf g.g.s.} 

Set $\BV=\Delta_\te\oplus(\ut _-\times\ut)\subset \g\times\g$.
As $\BV$ is a complementary space to
$\h$ in $\g\times\g$, it follows from the independence principle that
\[
   \q:= \h\ltimes \BV^{\sf ab}\simeq \h\ltimes\Delta_\g^{\sf ab}= (\g\times\g)_{(0)},
\]
i.e., $\q$ is the {\sf IW}-contraction of $\g\times\g$ associated with $\h$. Recall that $d_j=\deg H_j$.

\begin{prop}                \label{prop-7-0}
We have $\ind\q=2l$ and\/ $\gS(\q)^{\q}$ is freely generated by the polynomials
\[
  \boldsymbol{F}_j=(H_{j,I}- H_{j,II})^\bullet \ \text{ with } \ 1\le j\le l
\]
and a basis of \, $\Delta_\te$.
\end{prop}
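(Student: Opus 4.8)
The plan is to compute $\ind\q$ and the generators of $\gS(\q)^{\q}$ by applying Theorem \ref{thm:h-ggs+codim2} to the contraction $\q=\h\ltimes\BV^{\sf ab}=(\g\times\g)_{(0)}$. First I would verify that the $2$-splitting \eqref{eq:2-twisted} is non-degenerate, i.e.\ that $\ind\q=2l=\rk(\g\times\g)$. Since $G/H$ is quasi-affine ($\h$ is solvable, so $H$ is contained in a Borel and $G/H$ fibres over the quasi-affine $G/B$ with affine fibre), Theorem \ref{thm:c=0} reduces this to checking that $\h$ is a spherical subalgebra of $\g\times\g$. For this I would note that $\h\supset\ut\times\ut_-$, so a Borel subgroup $B\times B_-$ of $\g\times\g$ has an open orbit on $(\g\times\g)/H$ already because $\dim(\h)+\dim(\be\times\be_-)=\dim(\g\times\g)+\dim\Delta_\te$ and $\h\cap(\be\times\be_-)=\Delta^{(-)}_\te$ meets $\be\times\be_-$ in the ``right'' codimension; concretely, $H\cdot(\be\times\be_-)$ is open since $\h+(\be\times\be_-)=\g\times\g$. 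Hence $c_{G\times G}((G\times G)/H)=0$ and $\ind\q=2l$ by Theorem \ref{thm:c=0}.

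Next I would produce an $\h$-good generating system. Take the basic invariants $H_{j,I},H_{j,II}$ of $\g\times\g$ ($1\le j\le l$). The components of maximal $\BV$-degree of $H_{j,I}+H_{j,II}$ and $H_{j,I}-H_{j,II}$ are controlled by the two contractions: the same system is simultaneously a $\Delta_\g$-{\sf g.g.s.}\ (via the Takiff description of $(\g\times\g)_{(\infty)}=\Delta_\g\ltimes\h^{\sf ab}\simeq\g\ltimes\g^{\sf ab}$, see Takiff~\cite{takiff} and \cite{p05}, which gives $\sum_j\deg_{\h}(H_{j,I}\pm H_{j,II})^{\bullet}=\dim\h$ and hence by Theorem \ref{thm:kot14}(ii) a {\sf g.g.s.}). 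The reindexing I actually need is for $\q=\h\ltimes\BV^{\sf ab}$: here $\BV=\Delta_\te\oplus(\ut_-\times\ut)$, so ``$\BV$-degree'' of an element built from $\g\times\g$ is governed by its components along $\Delta_\te$ and along $\ut_-\times\ut$. I would argue that each $\boldsymbol{F}_j=(H_{j,I}-H_{j,II})^{\bullet}$ (highest $\BV$-degree component) together with the $\Delta_\te$-components of $H_{j,I}+H_{j,II}$ form an $\h$-{\sf g.g.s.}, by the numerical criterion $\sum\deg_{\BV}(\cdot)^{\bullet}=\dim\BV$ from Theorem \ref{thm:kot14}(ii); the relevant degree bookkeeping is the same as in the $(\be,\g_0)$ case (Section \ref{sect:g0-b}) combined with the fact that $\Delta_\te\subset\BV$ contributes the remaining invariants linearly, exactly as $\gS(\te)$ appeared in Proposition \ref{prop:gen-Zinf}. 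A cleaner route: use the \emph{Independence Principle} to replace $\BV$ by any convenient complement to $\h$, in particular by $\Delta_\g$ itself, so $\q\simeq\h\ltimes\Delta_\g^{\sf ab}$, and then the $\h$-{\sf g.g.s.}\ question becomes the $\Delta_\g$-{\sf g.g.s.}\ question already settled by Takiff's theorem — giving the polynomials $\boldsymbol{F}_j$ directly, while the Cartan part $\Delta_\te\subset\h$ is central in $\q$ (since $[\h,\BV]$ has no $\Delta_\te$-component once we realise $\BV\simeq\Delta_\g$) and contributes $l$ further free generators.

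Finally I would invoke Theorem \ref{thm:h-ggs+codim2}: it requires $\ind\q=\ind(\g\times\g)$ — established above — and the \emph{codim}--$2$ property for $\q$. The latter I would deduce from the same Takiff-type analysis: $(\g\times\g)_{(0)}$ with $\h$ solvable has its singular set of codimension $\ge 2$ because the relevant ``fundamental semi-invariant'' computation (as in \cite[\S5]{contr} for the $\be$-case, now done twice, once in each factor with opposite triangular parts) shows no codimension-one component survives after combining the two copies with the twist. Granting this, Theorem \ref{thm:h-ggs+codim2} yields that the $\bullet$-components of the $\h$-{\sf g.g.s.}\ freely generate $\gS(\q)^{\q}$, which is precisely the asserted free generating set $\{\boldsymbol{F}_j\mid 1\le j\le l\}\cup(\text{basis of }\Delta_\te)$; the count $l+l=2l=\ind\q$ confirms minimality.

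\textbf{Main obstacle.} The delicate point is verifying the \emph{codim}--$2$ property for $\q=(\g\times\g)_{(0)}$ (equivalently, that the singular set has no divisorial component), together with matching the $\BV$-grading bookkeeping to the Takiff/diagonal picture so that the concrete polynomials $(H_{j,I}-H_{j,II})^{\bullet}$ are genuinely the highest-$\BV$-degree components. Everything else is either quoted (Theorems \ref{thm:c=0}, \ref{thm:kot14}, \ref{thm:h-ggs+codim2}, Takiff's theorem) or a routine dimension count.
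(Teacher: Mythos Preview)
Your proposal has several genuine gaps.

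First, a small but real error: $G/B$ is projective, not quasi-affine, so your justification for applying Theorem~\ref{thm:c=0} fails as written. This is reparable via Remark~\ref{Dima-T}, but note that the paper bypasses the issue entirely: it exhibits a concrete point $\gamma=(\xi,\xi)$ with $\xi\in\te^*$ regular, checks $\dim\q^\gamma=2l$ directly, and obtains $\ind\q\ge 2l$ from the $2l$ algebraically independent invariants it constructs.

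Second, and more seriously, your ``cleaner route'' confuses the two contractions. An $\h$-{\sf g.g.s.} concerns the components of maximal degree in the \emph{complement} of $\h$ (i.e.\ in $\BV$ or $\Delta_\g$), and these are the purported generators of $\gS(\q)^\q$ for $\q=\h\ltimes\BV^{\sf ab}$. Takiff's theorem, however, describes $\gS(\Delta_\g\ltimes\h^{\sf ab})^{\Delta_\g\ltimes\h^{\sf ab}}$, the \emph{other} contraction $(\g\times\g)_{(\infty)}$; it tells you about a $\Delta_\g$-{\sf g.g.s.}, not an $\h$-{\sf g.g.s.} The Independence Principle lets you change the complement of $\h$, but it does not turn an $\h$-{\sf g.g.s.} question into a $\Delta_\g$-{\sf g.g.s.} question. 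Relatedly, you write ``$\Delta_\te\subset\h$'', but in fact $\Delta_\te\subset\BV$ (it is $\Delta^{(-)}_\te$ that sits in $\h$); the centrality of $\Delta_\te$ in $\q$ comes from its lying in the abelian ideal, together with $[\Delta_\te,\h]\subset\h$.

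Third, even if you had an $\h$-{\sf g.g.s.} and the {\sl codim}--$2$ property, Theorem~\ref{thm:h-ggs+codim2} would hand you $2l$ generators that are all $\bullet$-components of basic invariants. But the paper shows $H_{j,s}^\bullet\in\gS(\Delta_\te)$, so $(H_{j,I}+H_{j,II})^\bullet$ lands in $\gS(\Delta_\te)$ with degree $d_j\ge 2$; such elements cannot generate a basis of $\Delta_\te$. You would still owe an argument replacing those $\bullet$-components by linear generators of $\Delta_\te$.

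The paper's proof avoids Theorem~\ref{thm:h-ggs+codim2} altogether. It computes the structure of $\boldsymbol{F}_j$ explicitly as $F_{j,I}-F_{j,II}+\tilde F_j$ with $\tilde F_j\in\Delta_\te{\cdot}\gS(\q)$, using the known description of $F_{j,s}$ from~\cite{alafe,contr}. Then, by a short contradiction argument restricting to $\Ann(\Delta_\te)$ and invoking the singular-set bounds for $\be\ltimes\ut_-^{\sf ab}$ in each factor, it shows the differentials of $\{\boldsymbol{F}_j\}\cup(\text{basis of }\Delta_\te)$ are linearly independent on a big open subset, and concludes via~\cite[Theorem~1.1]{ppy}. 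This is what you should do; the {\sl codim}--$2$ verification you flagged as the main obstacle is essentially this differential computation, and it requires the explicit form of the $\boldsymbol{F}_j$, not an abstract {\sf g.g.s.} argument.
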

\begin{proof}
Let $\gamma=(\xi,\xi)\in\q^*$ be a linear form such that $\xi\in\te^*$ and $\dim\g^\xi=l$.
Then $\dim\q^\gamma = 2l$ and thereby $\ind\q\le 2l$. Note also that $\Delta_\te$ belong to the centre of $\q$, i.e.,
$[\Delta_\te,\g\times\g]_{(\infty)}=0$.

Let $F_j=F_{j,I}$ be the highest  $\ut _-$-component of $H_j\in\gS(\g)^{\g }$ w.r.t. the splitting
$\g=\be\oplus \ut_-$.
By~\cite[Lemma\,5.7]{contr}, we have $F_j\in\gS(\ut \oplus\ut _-)$.
Recall that by \cite{alafe} the elements  $\{F_j\}$ are algebraically independent  and
$\deg_{\ut _-} F_j=d_j-1$. Similarly, let $F_{j,II}$ be the highest  $\ut$-component of $H_{j,II}$ w.r.t. the splitting
$\g=\be_-\oplus \ut$.

For each $j\in\{1,\dots,l\}$ and $s\in\{I,II\}$, we have  $H_{j,s}^\bullet \in\gS(\Delta_\te)$.
In view of this and the above paragraph,
\[
  \boldsymbol{F}_j= F_{j,I} -  F_{j,II} + \tilde F_j, \ \text{ where } \ \tilde F_j \in \Delta_\te\gS(\q).
\]
We see that $\gS(\q)^{\q}$ contains $2l$ algebraically independent elements and hence
$\ind\q\ge 2l$. Thereby  $\ind\q=2l$.

Assume that $\dim(\left<\textsl{d}_\gamma \boldsymbol{F}_j \mid 1\le j\le l\right>_{\bbk}+\Delta_\te)<2l$
for all points $\gamma$ of a divisor $D\subset\q^*$. Then $D$ is defined by a homogeneous polynomial
and $\dim(D\cap\Ann(\Delta_\te))\ge \dim\q-l-1$.
If we write $\gamma=\gamma_I+\gamma_{II}$ for $\gamma\in (D\cap\Ann(\Delta_\te))$,  then the
differentials $\{\textsl{d}_{\gamma_s} F_{j,s}\}$ are linearly dependent at $\gamma_s$ and thus
\[
   \gamma_I \in (\be\ltimes\ut^{\sf ab}_-)^*_{\sf sing}, \quad \gamma_{II} \in (\be_-\ltimes\ut^{\sf ab})^*_{\sf sing}
\]
by \cite{contr}.
The intersection $(\gt b\ltimes\ut _-^{\sf ab})^*_{\sf sing} \cap \Ann (\te)$ is a proper closed subset of
$\Ann\!(\te)$, cf. Lemma~\ref{lm-sing-0}{\sf (i)}.  Thereby $\dim(D\cap\Ann (\Delta_\te))\le \dim\q-l-2$,
a contradiction.

By~\cite[Theorem \,1.1]{ppy}, the polynomials $\{\boldsymbol{F}_j\}$, together with a basis of
$\Delta_\te$, generate an algebraically closed subalgebra of $\gS(\q)$. Since $\ind\q=2 l$, we are done.
\end{proof}

Thus, the above results show that $2$-splitting~\eqref{eq:2-twisted} is non-degenerate, and we can
consider the corresponding Poisson-commutative subalgebra of $\gS(\g\times\g)$.

\begin{thm}    \label{thm:g-h}
The algebra $\gZ_{\langle\h,\Delta_\g\rangle}\subset\gS(\g\times\g)$ is polynomial.
It is freely generated by the bi-homogeneous components
\[
   \{(H_{j,I} + H_{j,II})_{s,d_j-s},  (H_{j,I} - (-1)^{d_j}H_{j,II})_{s',d_j-s'}
   \mid 1\le j\le l,\ 1\le s \le d_j,  \,  1\le s' \le d_j-1 \}
\]
together with a basis for\/ $\Delta_\te$.
\end{thm}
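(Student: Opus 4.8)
The plan is to follow the template of Theorems~\ref{thm:b-n_polynomial} and~\ref{thm:g0-b-polynomial}: first describe the two Poisson centres $\cz_0$ and $\cz_\infty$ attached to the contractions of the $2$-splitting~\eqref{eq:2-twisted}, and then prune the generating set produced by Theorem~\ref{thm:main3-1}. Since this $2$-splitting has just been shown to be non-degenerate, Theorem~\ref{thm:main3-1} gives that $\gZ_{\langle\h,\Delta_\g\rangle}$ is generated by $\cz_0$, $\cz_\infty$, and all bi-homogeneous components of the basic invariants $H_{1,I},\dots,H_{l,I},H_{1,II},\dots,H_{l,II}$ of $\g\times\g$. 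Since for each fixed bi-degree the components of $H_{j,I}$ and $H_{j,II}$ span the same subspace as those of $H_{j,I}+H_{j,II}$ and $H_{j,I}-(-1)^{d_j}H_{j,II}$ (the transition matrix $\left(\begin{smallmatrix}1&1\\1&-(-1)^{d_j}\end{smallmatrix}\right)$ being handled together with $\cz_0,\cz_\infty$), it is equivalent to generate $\gZ_{\langle\h,\Delta_\g\rangle}$ by $\cz_0$, $\cz_\infty$, and the bi-homogeneous components of those combinations.

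Next I would record the two centres explicitly. For $\cz_0=\cz\gS((\g\times\g)_{(0)})$: by the Independence Principle $(\g\times\g)_{(0)}=\h\ltimes\Delta_\g^{\sf ab}\simeq\q$, so Proposition~\ref{prop-7-0} applies and $\cz_0$ is freely generated by the polynomials $\boldsymbol F_j=(H_{j,I}-H_{j,II})^\bullet$ with $1\le j\le l$ together with a basis of $\Delta_\te$. For $\cz_\infty=\cz\gS((\g\times\g)_{(\infty)})$: again by the Independence Principle $(\g\times\g)_{(\infty)}=\Delta_\g\ltimes\h^{\sf ab}$ is the Takiff algebra $\g\ltimes\g^{\sf ab}$, which has a polynomial ring of symmetric invariants by Takiff~\cite{takiff} (cf.\ also~\cite{p05}) and enjoys the {\sl codim}--$2$ property; since $\{H_{j,I}+H_{j,II},\,H_{j,I}-H_{j,II}\}$ is a $\Delta_\g$-{\sf g.g.s.}\ (as noted before the theorem) and $\ind(\g\times\g)_{(\infty)}=\ind(\g\times\g)$, Theorem~\ref{thm:h-ggs+codim2} identifies $\cz_\infty$ as freely generated by the bi-homogeneous components of $H_{j,I}+H_{j,II}$ and of $H_{j,I}-H_{j,II}$ of maximal $\h$-degree. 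In particular the generators of $\cz_0$ and $\cz_\infty$ are themselves bi-homogeneous components of the $H_{j,I}\pm H_{j,II}$'s (or lie in $\gS(\Delta_\te)$), so $\gZ_{\langle\h,\Delta_\g\rangle}$ is generated by $\Delta_\te$ together with all bi-homogeneous components of the $H_{j,I}+H_{j,II}$'s and $H_{j,I}-(-1)^{d_j}H_{j,II}$'s.

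It then remains to discard the superfluous components, exactly as in the proofs of Theorems~\ref{thm:b-n_polynomial} and~\ref{thm:g0-b-polynomial}. The computational input is the behaviour of $H_{j,I},H_{j,II}$ on the extreme strata: restricted to $\Ann(\Delta_\g)=\{(\xi,-\xi)\}$ one has $H_{j,II}=(-1)^{d_j}H_{j,I}$, and restricted to $\Ann(\h)$ both $H_{j,I}$ and $H_{j,II}$ coincide with the pull-back of $H_j|_\te\in\gS(\te)^W$. Using this (together with the vanishing of restrictions to the nilpotent-type summands of the decomposition, as in the earlier proofs) one checks that for each $j$ the pure $\h$-degree components of $H_{j,I}-(-1)^{d_j}H_{j,II}$ vanish, the pure $\Delta_\g$-degree components of $H_{j,I}+H_{j,II}$ already lie in $\gS(\Delta_\te)$, and the remaining unwanted components are either zero or redundant. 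What survives is precisely the family displayed in the statement, of cardinality $\sum_{j=1}^{l}(2d_j-1)+l=2\bb(\g)=\bb(\g\times\g)$. Since $\trdeg\gZ_{\langle\h,\Delta_\g\rangle}=\bb(\g\times\g)$ by Theorem~\ref{thm:dim-Z}, a generating set of this cardinality must consist of algebraically independent (in particular nonzero) polynomials, whence the polynomiality claim.

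The hard part is the last paragraph: matching the Takiff-type description of $\cz_\infty$ and the Proposition~\ref{prop-7-0}-description of $\cz_0$ against the bi-homogeneous components of the $H_{j,I}\pm H_{j,II}$'s, and carefully carrying the parity sign $(-1)^{d_j}$ coming from $H_j(-\xi)=(-1)^{d_j}H_j(\xi)$, so that the surviving generators are exactly the ones listed in the ranges $1\le s\le d_j$ and $1\le s'\le d_j-1$. This is bookkeeping rather than a new idea, but it is where all the work sits.
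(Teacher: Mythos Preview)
Your overall plan is the paper's own: describe $\cz_0$ via Proposition~\ref{prop-7-0}, describe $\cz_\infty$ via the Takiff picture, feed both into Theorem~\ref{thm:main3-1}, prune the redundant bi-homogeneous components, and match the surviving count against $\trdeg\gZ_{\langle\h,\Delta_\g\rangle}=\bb(\g\times\g)$ from Theorem~\ref{thm:dim-Z}. So the strategy is correct and identical to the paper's.

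There is, however, a genuine gap in your execution. You try at the outset to replace the basic invariants $H_{j,I},H_{j,II}$ (equivalently $H_{j,I}\pm H_{j,II}$) by the pair $H_{j,I}+H_{j,II}$ and $H_{j,I}-(-1)^{d_j}H_{j,II}$, and then carry all bi-homogeneous components of {\it those}. But for odd $d_j$ the two combinations coincide, so the transition matrix $\bigl(\begin{smallmatrix}1&1\\1&-(-1)^{d_j}\end{smallmatrix}\bigr)$ is singular; your parenthetical ``handled together with $\cz_0,\cz_\infty$'' does not repair this, because $\cz_0$ and $\cz_\infty$ only supply the {\it extreme} bi-homogeneous components $(H_{j,I}-H_{j,II})^\bullet$ and $(H_{j,I}-H_{j,II})_\bullet$, not the intermediate ones. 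Consequently, for odd $d_j$ you have lost the components $(H_{j,I}-H_{j,II})_{s,d_j-s}$ with $1<s<d_j$, and your cardinality count $\sum_j(2d_j-1)+l$ silently assumes the two displayed families are distinct for every $j$, which is false when $d_j$ is odd.

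The paper sidesteps this by working with the {\it nonsingular} pair $H_{j,I}+H_{j,II}$ and $H_{j,I}-H_{j,II}$ throughout: it takes all their bi-homogeneous components together with $\Delta_\te$, removes the $(0,d_j)$-components (which lie in $\gS(\Delta_\te)$ or vanish), and then observes that exactly one of the two $(d_j,0)$-components vanishes, namely $(H_{j,I}-(-1)^{d_j}H_{j,II})_{d_j,0}=0$ (since on $\Ann(\Delta_\g)=\{(\xi,-\xi)\}$ one has $H_{j,II}=(-1)^{d_j}H_{j,I}$). Only {\it after} this pruning does the parity sign appear, and only to record which $(d_j,0)$-component dies. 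If you reorganise your argument in that order, the gap disappears and the counting goes through.
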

\begin{proof}
It follows from the description of $\cz_{\infty}$ and Theorem~\ref{thm:main3-1} that
$\gZ_{\langle\h,\Delta_\g\rangle}$ is generated by all the bi-homogeneous components of
$\{H_{j,I} \pm H_{j,II}\}$ and $\cz_0$.
By Proposition~\ref{prop-7-0}, $\cz_0$ is generated by the bi-homogeneous components of the
form $\{(H_{j,I}- H_{j,II})^\bullet\}$ with $j=1,\dots,l$ and
a basis for $\Delta_\te$. Thus, the total number of generators is at most $2\bb(\g)+3l$.

Since the components of the form $(H_{j,I} \pm H_{j,II})_{0,d_j}$ are either zero or belong to
$\gS(\Delta_\te)$, they are redundant.  Notice also that $(H_{j,I} - (-1)^{d_j}H_{j,II})_{d_j,0}=0$ for all $j$.
Therefore, there are at most $2\bb(\g)=\bb(\g\times\g)$ nonzero generators and, in view of
Theorem~\ref{thm:dim-Z}, they must be algebraically independent.
\end{proof}

\subsection{The real picture}
Assume now that $\bbk=\BC$. Let $\ka$ be a compact real form of $\g$. Then $\be\cap \ka=i\te_{\BR}$,
where $\te_{\BR}$ is a maximal torus in a split real form $\g _{\BR}\subset \g$.
Set $\rr=\te_{\BR}\oplus\ut$. It is an $\BR$-subalgebra of $\be$ and we have the real $2$-splitting
$\g =\rr\oplus\ka$, which is the Iwasawa decomposition of $\g$ as a real Lie algebra.
The complexification of this decomposition is conjugate to the
 $2$-splitting of $\g\times\g$ defined by  Eq.~\eqref{eq:2-twisted}.
Here $(\g,\ka, \rr)$ is a Manin triple over $\BR$, see~\cite[Sect.~5.3]{duzu}.

We choose the basic symmetric invariants of $\g$ such that each $H_j$ takes only real values on
$\te_{\mathbb R}$. Over $\mathbb R$, $\gS(\g)^{\g }$ is generated by $\Re H_j$ and $\Im H_j$ with
$1\le j\le l$.

Translating Theorem~\ref{thm:g-h} to the real setting, we obtain the following result.

\begin{thm}    \label{thm:k-b-polynomial}
Let $\gS_\BR(\g)$ be the symmetric algebra over $\BR$ of the real Lie algebra $\g$. Then
the $\BR$-algebra $\gZ_{\langle\rr,\gt k\rangle}\subset \gS_\BR(\g)$ is polynomial. It is freely generated by the bi-homogeneous
components
\[
   \{(\Re H_j)_{s,d_j-s},  (\Im  H_j)_{s',d_j-s'}   \mid 1\le j\le l,\ 1\le s \le d_j,  \,  1\le s' \le d_j-1 \}
\]
together with a basis of \ $i\te_{\mathbb R}$.
\end{thm}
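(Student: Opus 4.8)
The plan is to obtain Theorem~\ref{thm:k-b-polynomial} as the real shadow of Theorem~\ref{thm:g-h}, so the only genuine work is to set up a faithful dictionary between the real and the complex pictures. Regard $\g$ as a real Lie algebra $\g_\BR$; its complexification is $\g_\BR\otimes_\BR\BC\isom\g\times\g$, and, as recalled just before the statement, under this isomorphism the complexification of the real Iwasawa $2$-splitting $\g=\rr\oplus\ka$ becomes, after a suitable conjugation, exactly the $2$-splitting $\g\times\g=\h\oplus\Delta_\g$ of~\eqref{eq:2-twisted} (with $\rr\leftrightarrow\h$ and $\ka\leftrightarrow\Delta_\g$). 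First I would package this into a $\BC$-algebra isomorphism
\[
  \Psi\colon\ \gS_\BR(\g)\otimes_\BR\BC\ \isom\ \gS(\g\times\g)
\]
and verify three features: (a) $\Psi$ intertwines the base change to $\BC$ of the $1$-parameter family $\{\,\,,\,\}_t$ attached to $\g=\rr\oplus\ka$ with the family attached to~\eqref{eq:2-twisted}; (b) $\Psi$ carries the bi-grading of $\gS_\BR(\g)$ determined by $\g=\rr\oplus\ka$ to the bi-grading of $\gS(\g\times\g)$ determined by $\g\times\g=\h\oplus\Delta_\g$; (c) $\Psi$ sends $\Re H_j$ and $\Im H_j$ to nonsingular $\BC$-linear combinations of $H_{j,I}$ and $H_{j,II}$ (essentially $H_{j,I}+H_{j,II}$ and $\tfrac{1}{\sqrt{-1}}(H_{j,I}-H_{j,II})$, up to the twist of one factor by the Chevalley involution forced by the conjugation toward~\eqref{eq:2-twisted}), and a chosen $\BR$-basis of $i\te_\BR\subset\ka$ to a $\BC$-basis of $\Delta_\te$.

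Granting (a), taking Poisson centres commutes with the faithfully flat base change $\BR\to\BC$, so for every $t\in\BP$ one gets $\cz_t\otimes_\BR\BC\isom\cz_t$ (on the left $\cz_t$ refers to $\g=\rr\oplus\ka$ over $\BR$, on the right to~\eqref{eq:2-twisted} over $\BC$). Hence $\Psi$ restricts to an isomorphism
\[
  \gZ_{\langle\rr,\ka\rangle}\otimes_\BR\BC\ \isom\ \gZ_{\langle\h,\Delta_\g\rangle}
\]
of subalgebras of $\gS(\g\times\g)$; in particular $\trdeg_\BR\gZ_{\langle\rr,\ka\rangle}=\trdeg_\BC\gZ_{\langle\h,\Delta_\g\rangle}=\bb(\g\times\g)=2\bb(\g)$ by Theorem~\ref{thm:dim-Z}. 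By Theorem~\ref{thm:g-h} the right-hand algebra is polynomial, freely generated by the bi-homogeneous components of $H_{j,I}\pm H_{j,II}$ listed there together with a basis of $\Delta_\te$.

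Now I would match generators bi-degree by bi-degree. Using (b) and (c), the $\Psi$-image of each element listed in Theorem~\ref{thm:k-b-polynomial} --- a component $(\Re H_j)_{s,d_j-s}$, a component $(\Im H_j)_{s',d_j-s'}$, or a fixed basis vector of $i\te_\BR$ --- lies in the prescribed bi-degree, and within each bi-degree the real list maps onto the complex free generating set of Theorem~\ref{thm:g-h} by an invertible (block-diagonal) $\BC$-linear change of variables; which components vanish and where the sign $(-1)^{d_j}$ enters are read off from the corresponding statements in the proof of Theorem~\ref{thm:g-h} by applying $\Psi$. Thus $\{\Psi(f\otimes1)\}$, as $f$ runs over the real list, is a system of free polynomial generators of $\gZ_{\langle\rr,\ka\rangle}\otimes_\BR\BC$. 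It then remains to descend: if $A$ is an $\BR$-algebra, $f_1,\dots,f_N\in A$, and $\{f_k\otimes1\}$ freely generates $A\otimes_\BR\BC$ as a polynomial ring, then $\{f_k\}$ freely generates $A$. Indeed $B=\BR[f_1,\dots,f_N]\subseteq A$ satisfies $B\otimes_\BR\BC=A\otimes_\BR\BC$, so $(A/B)\otimes_\BR\BC=0$ and $A=B$ by faithful flatness, while any polynomial relation over $\BR$ among the $f_k$ is a relation over $\BC$, forcing algebraic independence. Applying this to $A=\gZ_{\langle\rr,\ka\rangle}$ with $f_1,\dots,f_N$ the elements listed in Theorem~\ref{thm:k-b-polynomial} completes the proof.

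The mathematical substance is entirely contained in Theorem~\ref{thm:g-h}, and the descent step is immediate. The one point demanding real care --- and where I would expect to spend most effort --- is constructing $\Psi$ and checking (a)--(c): one must pin down the conjugation that turns the complexified Iwasawa decomposition into~\eqref{eq:2-twisted} and track how it acts on the invariants $H_j$, which is exactly what produces the sign $(-1)^{d_j}$ and the precise list of surviving bi-homogeneous components.
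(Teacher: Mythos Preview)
Your proposal is correct and follows exactly the paper's approach: the paper offers no detailed proof, merely stating that the theorem is obtained by ``translating Theorem~\ref{thm:g-h} to the real setting'', and your argument makes this translation precise via the base-change isomorphism $\gS_\BR(\g)\otimes_\BR\BC\simeq\gS(\g\times\g)$ and a standard faithful-flatness descent. The care you flag around constructing $\Psi$ and verifying (a)--(c) is exactly the content hidden in the paper's one-line justification.
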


\begin{rmk}    \label{rem:real-general}
We associated a $2$-splitting of $\g$ to any (complex) involution $\sigma$, see
Remark~\ref{rmk:any-invol}.  If $\g_{\BR,\sigma}$ is the real form of $\g$ corresponding to $\sigma$, then
the Iwasawa decomposition of $\g_{\BR,\sigma}$ is just the real form of that $2$-splitting.
We hope to elaborate on this relationship and related {\sf PC}-subalgebras in the future.
\end{rmk}

\end{document}